\tikzset{hide on/.code={\only<#1>{\color{fg!20}}}}
\definecolor{clemson-orange}{RGB}{234,106,32}
\definecolor{broncos-orange}{RGB}{252,76,2}
\definecolor{cincinnati-red}{RGB}{190,0,0}
\definecolor{pink}{RGB}{255,105,180}
\definecolor{celtics}{RGB}{46,123,59}
\definecolor{leafs-blue}{RGB}{0,58,120}
\definecolor{pure-cyan}{RGB}{0,100,92}
\definecolor{clemson-orange}{RGB}{234,106,32}
\definecolor{chicago-maroon}{RGB}{128,0,0}
\definecolor{northwestern-purple}{RGB}{82,0,99}
\definecolor{sauder-green}{RGB}{171,180,0}
\definecolor{saffron}{RGB}{255, 153, 51}
\definecolor{lawngreen}{RGB}{0,250,154}
\newcommand{\bb}{\mathbb}
\newcommand{\R}{\bb R}
\newcommand{\Z}{{\bb Z}}
\newcommand{\N}{{\bb N}}
\newcommand{\Q}{{\bb Q}}
\newcommand{\ch}{Chv\'{a}tal }
\DeclareMathOperator\proj{proj}
\DeclareMathOperator*{\conv}{conv}
\DeclareMathOperator*{\aff}{aff}
\DeclareMathOperator*{\linspan}{span}
\DeclareMathOperator*{\intt}{int}
\DeclareMathOperator*{\cone}{cone}
\DeclareMathOperator*{\ceil}{ceil}
\DeclareMathOperator*{\intcone}{intcone}
\DeclareMathOperator*{\carr}{carr}
\DeclareMathOperator*{\cc}{cc}
\theoremstyle{definition}
\newtheorem{theorem}{Theorem}[section]
\newtheorem{lemma}[theorem]{Lemma}
\newtheorem{prop}[theorem]{Proposition}
\newtheorem{claim}{Claim}
\newtheorem{question}{Question}
\newtheorem{definition}[theorem]{Definition}
\newtheorem{remark}[theorem]{Remark}
\newtheorem{example}[theorem]{Example}
\numberwithin{equation}{section}
\title{Mixed-integer linear representability, disjunctions, and \ch functions --- modeling implications\thanks{A preliminary version of this paper appeared as ``Mixed-integer linear representability, disjunctions, and variable elimination'' in the proceedings of the IPCO 2017 conference. See \cite{basu2017mixed} for a full reference.}}
\author{Amitabh Basu\footnote{Applied Mathematics and Statistics, Johns Hopkins University, USA. A. Basu was supported by the NSF grant CMMI1452820.}
\and Kipp Martin\footnote{Booth School of Business, University of Chicago}
\and Christopher Thomas Ryan\footnotemark[2]
\and Guanyi Wang\footnote{Industrial and Systems Engineering, Georgia Institute of Technology}}
\begin{document}

\maketitle

\begin{abstract}
Jeroslow and Lowe gave an exact geometric characterization of subsets of $\R^n$ that are projections of mixed-integer linear sets, also known as MILP-representable or MILP-R sets. We give an alternate algebraic characterization by showing that a set is MILP-R {\em if and only if} the set can be described as the intersection of finitely many {\em affine \ch inequalities} in continuous variables (termed AC sets). These inequalities are a modification of a concept introduced by Blair and Jeroslow.   Unlike the case for linear inequalities,  allowing for integer variables in \ch inequalities and projection does not enhance modeling power.  We show that the MILP-R sets are still precisely those sets that are modeled as affine \ch inequalites with integer variables. Furthermore, the projection of a set defined by affine \ch inequalites with integer variables is still an MILP-R set.
 We give a sequential variable elimination scheme that, when applied to a MILP-R set yields the AC set characterization. This is related to the elimination scheme of Williams and Williams-Hooker, who describe projections of integer sets using \emph{disjunctions} of affine \ch systems. We show that disjunctions are unnecessary  by showing how to find the affine \ch inequalities that cannot be discovered by the Williams-Hooker scheme. This allows us to answer a long-standing open question due to Ryan (1991) on designing an elimination scheme to represent finitely-generated integral monoids as a system of \ch inequalities \emph{without} disjunctions. Finally, our work can be seen as a generalization of the approach of Blair and Jeroslow, and Schrijver for constructing consistency testers for integer programs to general AC sets. 

\end{abstract}


\section{Introduction}\label{s:introduction}

Researchers are interested in characterizing sets that are projections of mixed-integer sets described by linear constraints. Such sets have been termed \emph{MILP-representable} or MILP-R  sets; see Vielma~\cite{vielma2015mixed} for a thorough survey. Knowing which sets are MILP-R is important because of the prevalence of good algorithms and software for solving MILP formulations. Therefore, if one encounters an application that can be modeled using MILP-R sets, then this sophisticated technology can be used to solve the application. There is also growing interest in generalizations of MILP-R sets, including projections mixed-integer points in a closed convex sets (see recent work by Del Pia and Poskin \cite{del2016mixed}, Dey, Diego and Mor\'an \cite{dey2013some}, and Lubin, Vielma and Zadik~\cite{lubin2017mixed,lubin2017regularity}). 

A seminal result of Jeroslow and Lowe \cite{jeroslow1984modelling} provides a geometric characterization of MILP-R sets as the sum of a finitely generated monoid, and a disjunction of finitely-many polytopes (see Theorem~\ref{theorem:jeroslow-lowe} below for a precise statement). 
Our point of departure is that we provide a constructive \emph{algebraic} characterization of MILP-representability that does not need disjunctions, but instead makes use of {\em affine \ch inequalities}, i.e. affine linear inequalities with rounding operations (for a precise definition see Definition~\ref{definition:ch-functions} below). We show that MILP-R sets are exactly those sets that satisfy a finite system of affine \ch inequalities, termed AC sets.

Affine \ch functions with continuous variables are a natural language for mixed-integer linear optimization.   Unlike the case for linear inequalities,  allowing for integer variables in \ch inequalities and projection does not enhance modeling power.  We show that the MILP-R sets are still precisely those sets are are modeled as affine \ch inequalites with integer variables. Furthermore, the projection of a set defined by affine \ch inequalites with integer variables is still an MILP-R set.


However, allowing disjunctions of sets   broadens the collection of sets that can be described.  There exist sets defined by disjunctions of affine \ch systems that are {\em not} MILP-R sets (see, for instance, Example~\ref{ex:dmic-is-too-big} below).
In other words, we show that disjunctions are not only unnecessary but are undesirable. This last message is underscored by the work of Williams \cite{williams-1,williams-2}, Williams and Hooker \cite{williams-hooker}, and Balas \cite{balas2011}. Their research attempts to generalize variable elimination methods for linear programming -- namely, the Fourier-Motzkin (FM) elimination procedure -- to integer programming problems. 
In these approaches, there is a need to introduce \emph{disjunctions} of inequalities that involve either rounding operations or congruence relations. Via this method, Williams, Hooker and Balas are able to describe the projections of integer sets as a \emph{disjunctive} system of affine \ch inequalities. The introduction of disjunctions is a point in common between the existing elimination methods of Williams, Hooker and Balas and the geometric understanding of projection by Jeroslow-Lowe. However, disjunctions in general can be unwieldy. Moreover, as stated above, allowing disjunctions together with affine \ch inequalities (as done in the algebraic approaches of Williams, Hooker and Balas) takes us out of the realm of MILP-R sets. 

Our approach to characterizing MILP-R sets is related to  \emph{consistency testers} for pure integer programs. Given a rational matrix $A$, a consistency tester is a function that takes as input a vector $b$ and returns a value that indicates whether the set $\left\{x : Ax \ge b, x \text{ integer} \right\}$ is non-empty. Seminal work by Blair and Jeroslow \cite{blair82} constructs a consistency tester for integer programs that is a pointwise maximum of a set of finitely many \ch functions (termed a {\em Gomory function} in Blair and Jeroslow \cite{blair82}). In \cite{schrijver86}, Schrijver obtains a version of this result that builds on the concepts of the \ch rank and total dual integrality of an integer system. A consistency tester describes a special type of MILP-R set; the projection of the pairs $(x,b)$ where $Ax \ge b$ onto the space of $b$'s. Our work generalizes the approach of Schrijver \cite{schrijver86} to apply to not only mixed-integer linear sets, but more generally to AC sets (and by our main result, MILP-R sets). 

Finally,  in Section~\ref{s:variable-elimination}  we give a  ``lift-and-project'' variable elimination scheme for mixed-integer AC sets. Our scheme, as opposed to the ones proposed by Williams and Hooker, and Balas, does not need to resort to disjunctions. Towards this end, our new procedure introduces auxiliary integer variables to simplify the structure of the AC system. In this transformed system, the projection of integer variables is easier to do without introducing disjunctions; at this stage, we project out the auxiliary variables that were introduced, as well as the variables that were originally intended to be eliminated. When this method is applied to a mixed-integer {\em linear} set, it generates redundant linear inequalities which, when combined with ceiling operators, characterize the projection without the  need for disjunctions. This is our proposed extension of Fourier-Motzkin elimination to handle integer variables, without using disjunctions.

{\em In summary, we are able to simultaneously show four things: 1) disjunctions are not necessary for mixed-integer linear representability (if one allows affine \ch inequalities), an operation that shows up in both the Jeroslow-Lowe and the Williams-Hooker approaches, 2) the language of affine \ch functions is a robust one for integer programming, being closed under integrality and projection, 3) our algebraic characterization comes with a variable elimination scheme unlike the geometric approach of Jeroslow-Lowe, and 4) our algebraic characterization is exact, as opposed to the algebraic approach of Williams-Hooker which does not yield a complete characterization of MILP-R sets.
}

Moreover, our algebraic characterization could be useful to obtain other insights into the structure of MILP-R sets that is not apparent from the geometric perspective. As an illustration, we resolve an open question posed in Ryan \cite{ryan1991} on the representability of integer monoids using our characterization. Theorem 1 in \cite{ryan1991} shows that every finitely-generated integer monoid can be described as a finite system of \ch inequalities but leaves open the question of how to construct the associated \ch functions via elimination. Ryan states that the elimination methods of Williams in \cite{williams-1,williams-2} do not address her question because of the introduction of disjunctions. Our work provides a constructive approach for finding a \ch inequality representation of finitely-generated integer monoids using elimination (see Section~\ref{s:variable-elimination}). 

Our new algebraic characterization may also lead to novel algorithmic ideas where researchers optimize by directly working with affine \ch functions, rather than using traditional branch-and-cut or cutting plane methods. We also believe the language of affine \ch functions has potential for modeling applied problems, since the operation of rounding affine inequalities has an inherent logic that may be understandable for particular applications. We leave both of these avenues as directions for future research.

The paper is organized as follows. Section~\ref{s:preliminaries} introduces our key definitions -- including mixed-integer linear representability and affine \ch functions -- used throughout the paper. It also contains a statement and intepretation of our main result, making concrete the insights described in this introduction.  This includes the definitions of MILP-representability and  affine \ch functions. Section~\ref{s:MILP-as-MIC} contains the proof of our main result.
Section~\ref{s:consistency-testers} relates  our work to the existing literature of consistency testers for integer programs, which was  the  source of inspiration for  this paper. Finally, Section~\ref{s:variable-elimination} explores our methodology from the perspective of elimination of integer variables, where we compare and contrast approach with the existing methodologies of Williams, Hooker and Balas. 
Section~\ref{s:conclusion}  has concluding remarks.


\section{Definitions and discussion of main result}\label{s:preliminaries}

In this section we introduce the definitions and notation needed to state our main result. We also provide an intuitive discussion of the implications of the result. 

$\N, \Z, \Q, \R$ denote the set of natural numbers, integers, rational numbers and reals, respectively. Any of these sets subscripted by a plus means the nonnegative elements of that set. For instance, $\Q_+$ is the set of nonnegative rational numbers. The ceiling operator $\lceil a \rceil$ gives the smallest integer no less than $a \in \R$. The projection operator $\proj_Z$ where $Z \subseteq \left\{x_1, \dots, x_n\right\}$ projects a vector $x \in \R^n$ onto the coordinates in $Z$. We use the notation $x_{-i}$ to denote the set $\left\{x_1, \dots, x_{i-1}, x_{i+1}, \dots, x_n \right\}$ and thus $\proj_{x_{-i}}$ refers to projecting out the $i$-th variable.  The following \emph{classes of sets}  are used throughout the paper.

An \emph{LP set} (short for linear programming set) is any set defined by the intersection of finitely many linear inequalities.%
%
%
\footnote{Of course, an LP-set is nothing other than a polyhedron. We use the terminology LP-set for the purpose of consistency with the definitions that follow.}
%
%
More concretely, $S \subseteq \R^n$ is an LP set if there exists an $m \in \N$, matrix $A \in \Q^{m \times n}$, and vector $b \in \R^m$ such that $S = \left\{ x \in \R^n : Ax \ge b \right\}$. We denote the collection of all LP sets by (LP). 

A set that results from applying the projection operator to an LP-set is called a \emph{LP-R set} (short for linear programming representable set). The set  $S \subseteq \R^n$ is an LP-R set if there exists an $m, p \in \N$, matrices $B \in \Q^{m \times n}$, $C \in \Q^{m \times d}$ and vector $b \in \R^m$ such that $S  = \proj_x \{ (x, y) \in \R^n  \times \R^p :  B x  + C y \ge b   \} .$ We denote the collection of all LP-R sets by (LP-R).

It well known that any LP-R set is an LP set, i.e., the projection of a polyhedron is also a polyhedron (see, for instance, Chapter~2 of \cite{martin1999large}). The typical proof uses Fourier-Motzkin (FM) elimination, a technique that is used in this paper as well (see the proof of Theorem~\ref{theorem:b-j-mod} and Section~\ref{s:consistency-testers} below). FM elimination is a method to eliminate (and consequently project out) continuous variables from a system of linear inequalities. For a detailed description of the FM elimination procedure we refer the reader to Martin \cite{martin1999large}. We provide some basic notation for the procedure here. FM elimination takes as input a linear system $Ax \ge b$ where $x = (x_1, \dots, x_n)$ and produces row vectors $u_1^1, \dots, u_1^{t_1}$ (called \emph{Fourier-Motzkin multipliers}) such that $u_1^j A x_{-1} \ge u_1^j b$ for $j = 1, \dots, t_1$ describes $\proj_{x_{-1}} \left\{x : Ax \ge b\right\}$.  This procedure can be applied iteratively to sequentially eliminate variables. When all variables are eliminated we denote the corresponding FM multipliers by $u^1, \dots, u^t$. We  make reference to FM multipliers at various points in the paper.

We  introduce both collections (LP) and (LP-R) (even though they are equal) to emphasize the point that, in general,  projecting sets could lead to a larger family, as in some of the other classes of sets defined below. 

A \emph{MILP set} (short for mixed-integer linear programming set) is any set defined by the intersection of finitely many linear inequalities where some or all of the variables in the linear functions defining the inequalities are integer-valued. The set  $S \subseteq \R^n$ is a MILP set if there exists an $m \in \N$, $I \subseteq \{1,2,\dots, n\}$, matrix $A \in \Q^{m \times n}$, and vector $b \in \R^m$ such that $S = \left\{ x \in \R^n : Ax \ge b, x_j \in \Z \text{ for } j \in I \right\}$. The collection of all MILP sets is denoted (MILP). 

Following Jeroslow and Lowe \cite{jeroslow1984modelling}, we define an \emph{MILP-R} set (short for mixed-integer linear programming representable set) to be any set that results from applying a projection operator to an MILP set.  The set $S \subseteq \R^n$ is an MILP-R set if there exists an $m, p, q \in \N$, matrices $B \in \Q^{m \times n}$, $C \in \Q^{m \times p}$ and $D \in \Q^{m \times q}$ and vector $b \in \R^m$ such that
\begin{equation*}
S  =  \proj_x \left\{ (x,y,z) \in \R^n \times \R^p \times \Z^q : Bx + Cy + Dz \ge b \right\}.
\end{equation*} 
The collection of all MILP-R sets is denoted (MILP-R). 

It is also well-known that there are MILP-R sets that are not MILP sets (see Williams~\cite{williams-1} for an example). Thus, projection  provides more modeling power when using integer variables, as opposed to the LP and LP-R sets where variables are all real-valued.

The key result known in the literature about MILP-R sets uses the following concepts. Given a finite set of vectors $\{r^1, \dots, r^t\}$, $\cone \{r^1, \dots, r^t\}$ is the set of all nonnegative linear combinations, and $\intcone \{r^1, \dots, r^t\}$ denotes the set of all nonnegative {\em integer} linear combinations. The set $\intcone \{r^1, \dots, r^t\}$ is also called a \emph{finitely-generated integer monoid} with generators $\{r^1, \dots, r^t\}$. The following is the main result from Jeroslow and Lowe~\cite{jeroslow1984modelling} stated as Theorem 4.47 in Conforti et. al. \cite{conforti2014integer}.

\begin{theorem}\label{theorem:jeroslow-lowe}
A set $S \subset \R^n$ is MILP-representable if and only if there exists rational polytopes $P_1, \dots, P_k \subseteq \R^n$ and vectors $r^1, \dots, r^t \in \Z^n$ such that
\begin{equation}\label{eq:jeroslow-lowe-chraracterization}
S = \bigcup_{i=1}^k P_i + \intcone \left\{r^1, \dots, r^t\right\}.
\end{equation}
\end{theorem}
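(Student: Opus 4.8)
The plan is to prove the equivalence by comparing the class of MILP-R sets with the class $\mathcal{C}$ of all sets of the form $\bigcup_{i=1}^{k}P_i+\intcone\{r^1,\dots,r^t\}$, where the $P_i$ are rational polytopes and the $r^j$ are integer vectors. The organizing observation is that $\mathcal{C}$ is closed under coordinate projection: a coordinate projection $\pi$ is linear, so it commutes with finite unions and Minkowski sums, $\pi(\intcone\{r^1,\dots,r^t\})=\intcone\{\pi(r^1),\dots,\pi(r^t)\}$, and $\pi$ carries rational polytopes to rational polytopes and integer vectors to integer vectors. Hence it suffices to establish two things: (a) every element of $\mathcal{C}$ is the projection of some MILP set, which gives $\mathcal{C}\subseteq(\text{MILP-R})$; and (b) for every rational polyhedron $Q\subseteq\R^{N}\times\R^{q}$ the mixed-integer set $Q\cap(\R^{N}\times\Z^{q})$ lies in $\mathcal{C}$ --- which, combined with the closure of $\mathcal{C}$ under projection and the fact that an MILP-R set is by definition the $\proj_x$-image of exactly such a set, gives $(\text{MILP-R})\subseteq\mathcal{C}$.

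For (a), let $S=\bigcup_{i=1}^{k}P_i+\intcone\{r^1,\dots,r^t\}$ and write each nonempty $P_i$ as a bounded system $A^iy\le b^i$. Introduce binary variables $\delta\in\{0,1\}^{k}$ with $\sum_i\delta_i=1$, disaggregated copies $y^1,\dots,y^k$ constrained by $A^iy^i\le b^i\delta_i$ (so that $\delta_i=0$ forces $y^i=0$, using boundedness of $P_i$), integer multipliers $\lambda\in\Z_{+}^{t}$, and the linking equation $x=\sum_i y^i+\sum_j\lambda_j r^j$. This is a rational MILP set in the variables $(x,y^1,\dots,y^k,\delta,\lambda)$, and the standard correctness argument for this disjunctive (Balas-type) encoding of a union of polytopes shows that its projection onto $x$ is exactly $S$.

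For (b) --- the heart of the argument --- use Minkowski--Weyl to write $Q=\conv(V)+\cone(R)$ with $V\subset\Q^{N+q}$ finite and $R\subset\Z^{N+q}$ finite, where $R$ also contains $\pm$ a basis of the lineality space so that non-pointed $Q$ is handled. Let $B:=\conv(V)+\{\sum_{r\in R}\beta_r r:\beta\in[0,1]^{R}\}$, a rational polytope, and $W_0:=B\cap(\R^{N}\times\Z^{q})$. Since $\proj_z(B)$ is bounded it contains only finitely many integer points, so $W_0$ is a finite union of rational polytopes --- the slices of $B$ at those integer values of the last $q$ coordinates. The claim is that $Q\cap(\R^{N}\times\Z^{q})=W_0+\intcone(R)$, which immediately puts it in $\mathcal{C}$. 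The inclusion ``$\supseteq$'' holds because $B\subseteq Q$ and $\intcone(R)\subseteq\cone(R)=\rec(Q)$, and because integer combinations of the integral generators $R$ keep the last $q$ coordinates integral. For ``$\subseteq$'', given $w$ in the set write $w=\sum_v\mu_v v+\sum_r\lambda_r r$ with $\mu$ in the simplex and $\lambda\ge0$, split $\lambda_r=\lfloor\lambda_r\rfloor+\{\lambda_r\}$, and set $\rho:=\sum_r\lfloor\lambda_r\rfloor r\in\intcone(R)$; then $w-\rho=\sum_v\mu_v v+\sum_r\{\lambda_r\}r\in B$, and since $w$ and $\rho$ both have integral last $q$ coordinates so does $w-\rho$, hence $w-\rho\in W_0$ and $w\in W_0+\intcone(R)$. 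Projecting onto the original coordinates then yields the ``only if'' direction, the generating vectors in $\Z^{n}$ being the images of the integral generators in $R$.

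I expect the only genuine obstacle to be getting (b) right, and the delicate point is the rounding step: one must round the \emph{recession-cone} multipliers $\lambda_r$ (nothing attached to the polytope part), and it is essential that the recession generators $R$ are \emph{integral} --- this is exactly what guarantees that subtracting $\rho\in\intcone(R)$ from a point with integral last $q$ coordinates leaves those coordinates integral, which is what confines the residual $w-\rho$ to the single polytope $B$ and hence to its finitely many integer slices. Secondary points to watch are the lineality-space case in Minkowski--Weyl, the degenerate case of an empty mixed-integer set (take the empty union, or a single empty polytope), and, on the ``if'' side, the boundedness of the $P_i$ that makes the disjunctive encoding valid.
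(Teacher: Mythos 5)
Your proof is correct, and since the paper never proves Theorem~\ref{theorem:jeroslow-lowe} --- it is quoted from Jeroslow--Lowe as stated in Conforti et al.\ --- there is nothing in the paper to diverge from: your argument (Minkowski--Weyl with integral ray generators, splitting the ray multipliers into integer and fractional parts so the residual lands in the truncated polytope $B$ and its finitely many integer slices, plus a Balas-type disjunctive MILP formulation for the converse) is essentially the standard proof found in that reference. One caveat worth noting: your ``only if'' direction needs the defining data of the mixed-integer set, including the right-hand side, to be rational so that Minkowski--Weyl yields rational $V$ and integral $R$; this is the classical setting of the cited theorem, although the paper's own definition of (MILP-R) formally allows a real right-hand side $b$, for which the statement with rational polytopes would fail (e.g.\ $\{x \ge \sqrt{2}\}$).
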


This result is a \emph{geometric} characterization of  MILP-R sets.
We provide an alternative {\em algebraic} characterization of MILP-R sets using  {\em affine \ch functions and inequalities}. {\em \ch functions}, first introduced by Blair and Jeroslow \cite{blair82}, are obtained by taking nonnegative combinations of linear functions and using the ceiling operator. We extend this original definition to allow for \emph{affine} functions to define {\em affine \ch functions}. To make this distinction precise we formally define affine \ch functions using the concept of finite binary trees from Ryan~\cite{ryan1986thesis}.

\begin{definition}\label{definition:ch-functions}
An affine \ch function $f:\R^n\to \R$ is constructed as follows. We are given a finite binary tree where each node of the tree is either: (i) a leaf, which corresponds to an affine linear function on $\R^n$ with rational coefficients; (ii) has one child with corresponding edge labeled by either a $\lceil \cdot \rceil $ or a number in $\Q_+$, or (iii) has two children, each with edges labeled by a number in $\Q_+$.

Start at the root node and (recursively) form functions corresponding to subtrees rooted at its children. If the root has a single child whose subtree is $g$, then either (a) $f = \lceil g \rceil $ if the corresponding edge is labeled $\lceil \cdot \rceil $ or (b) $f = \alpha g$ if the corresponding edge is labeled by $a \in \Q_+$.
If the root has two children with corresponding edges labeled by $a \in \Q_+$ and $b \in \Q_+$ then $f = ag + bh$ where $g$  and $h$  are  functions corresponding to the respective children of the root.\footnote{The original definition of \ch function in Blair and Jeroslow \cite{blair82} does not employ binary trees. Ryan shows the two definitions are equivalent in~\cite{ryan1986thesis}.}

The \emph{depth} of a binary tree representation $T$ of an affine \ch function is the length of the longest path from the root to a node in $T$, and ceiling count $\cc(T)$ is the total number of edges labeled $\lceil \cdot \rceil$. \quad $\triangleleft$
\end{definition}


\begin{example}
Below, $\hat{f}$ is a \ch function and $\hat{g}$ is an affine \ch function:
\begin{align}
  \hat{f} & = 3 \lceil x_1 + 5 \lceil 2 x_1 + x_2 \rceil \rceil + \lceil 2 x_3 \rceil, \\
  \hat{g} & = 3 \lceil x_1 + 5 \lceil 2 x_1 - x_2 + 3.5 \rceil \rceil + \lceil -2 x_3 \rceil.
\end{align}
See Figure \ref{example:binary-tree} for a binary tree representation $T(\hat{g})$ of the affine \ch function $\hat{g}$.
\begin{figure}
\begin{center}
\begin{tikzpicture}[level distance = 4.5em, sibling distance=15em,
  every node/.style = {shape=rectangle, rounded corners,
    draw, align=center,
    top color=white, bottom color=white},
    edge from parent/.style = {draw, -latex}]
  \node {$\hat{g}(x) = 3 \lceil x_1 + 5 \lceil 2 x_1 - x_2 + 3.5 \rceil \rceil+ \lceil -2 x_3 \rceil$}
    child { node {$\lceil x_1 + 5 \lceil 2 x_1 - x_2 + 3.5 \rceil \rceil$}
      child{ node{$x_1 + 5 \lceil 2 x_1 - x_2 + 3.5 \rceil$}
      child{ node{$x_1$}
      edge from parent node {1}
      }
      child{ node{$\lceil 2 x_1 - x_2 + 3.5 \rceil$}
      child{ node{$2 x_1 - x_2 + 3.5$}
      edge from parent node {$\lceil \cdot \rceil$}
      }
      edge from parent node {5}
      }
      edge from parent node {$\lceil \cdot \rceil$}
      }
      edge from parent node {3}
      }
    child { node {$\lceil -2 x_3 \rceil$}
      child{node{ $-2x_3$ }
      edge from parent node {$\lceil \cdot \rceil$}
      }
      edge from parent node {1}
       };
\end{tikzpicture}
\end{center}
\caption{Binary tree structure for affine \ch function}
\label{example:binary-tree}
\end{figure}
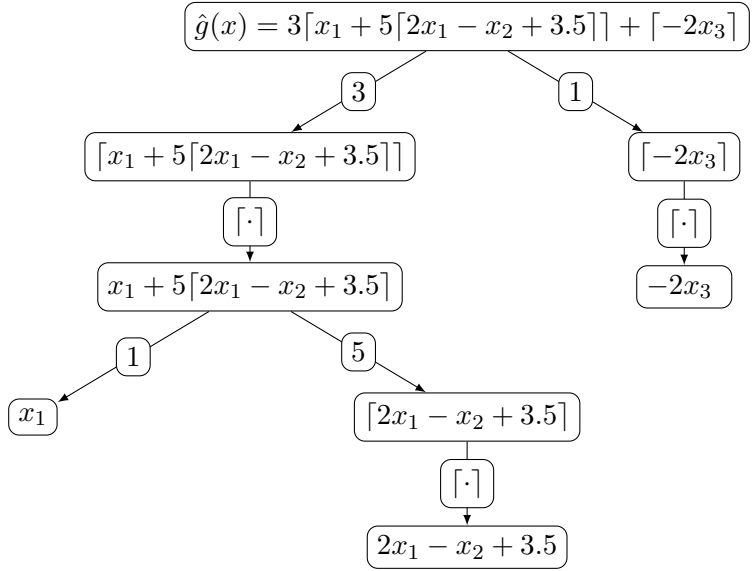
This representation has depth $4$ and ceiling count $\cc(T(\hat{g})) = 3$.
\end{example}

The original definition of {\em \ch function} in the literature requires the leaves of the binary tree to be linear functions, and the domain of the function to be $\Q^n$ (see \cite{ryan1986thesis,blair82,ryan1991}). Our definition above allows for {\em affine} linear functions at the leaves, and the domain of the functions to be $\R^n$. We use the term {\em \ch function} to refer to the setting where the leaves are linear functions. In this paper, the domain of all functions is $\R^n$. This change to the domain from $\Q^n$ to $\R^n$ is not just cosmetic; it is imperative for deriving our results. See also the discussion after Theorem~\ref{theorem:b-j-mod}.

\begin{definition}\label{definition:AC-inequality}
An inequality $f(x) \leq b$, where $f$ is an affine \ch function and $b\in \R$, is called an {\em affine \ch inequality}. 
\end{definition}

\begin{remark} Note that if $f$ is an affine \ch function, it does not necessarily mean that $-f$ is also an affine \ch function. Because of this, the inequality $f(x) \geq b$ is, in general, not an affine \ch inequality: the direction of the inequality matters in Definition~\ref{definition:AC-inequality}. \quad $\triangleleft$
\end{remark}

An \emph{AC set} (short for affine \ch set) is any set defined by the intersection of finitely affine \ch inequalities.  The set  $S \subseteq \R^n$ is an AC set if there exists an $m \in \N$, affine \ch function $f_1, f_2, \dots, f_m$, and a real vector $b \in \R^m$ such that $ S  =  \{ x \in \R^n  : f_i(x) \le b_i, i = 1, 2, \ldots, m  \}$. The collection of all AC sets is denoted (AC). 

A set that results from applying the projection operator to an AC set is called an \emph{AC-R set} (short for affine \ch representable set).  The set  $S \subseteq \R^n$ is an AC-R set if there exists an $m, p \in \N$, affine \ch functions $f_1, f_2, \dots, f_m$ defined on $\R^{n + p}$, and a vector $b \in \R^m$ such that $S  = \proj_x \{ (x, y) \in \R^n  \times \R^p  :  f_i(x, y) \le b_i, i = 1, 2, \ldots, m  \}$. The collection of all AC-R sets is denoted (AC-R). 

An \emph{MIAC set} (short for mixed-integer affine \ch set) is an affine \ch set where some of the variables involved are integer.   The set  $S \subseteq \R^n$ is an MIAC set if there exists an $m \in \N$, $I \subseteq \{1,2,\dots, n\}$, affine \ch functions $f_1, f_2, \dots, f_m$, and vector $b \in \R^m$ such that $S  = \{ x \in \R^n  : f_i(x) \le b_i, i = 1, 2, \ldots, m, \,  x_{j}  \in \Z, \, j \in I \}$. The collection of all MIAC sets is denoted (MIAC). 

A set that results from applying the projection operator to an MIAC set is called a \emph{MIAC-R} set (short for mixed-integer affine \ch representable set. The set $S \subseteq \R^n$ is an MIAC-R set if there exists an $m, p, q \in \N$, affine \ch functions $f_1, f_2, \dots, f_m$ defined on $\R^{n+p+q}$ and vector $b \in \R^m$ such that
\begin{equation*}
S  =  \proj_x \left\{ (x,y,z) \in \R^n \times \R^p \times \Z^q : f_i(x, y, z) \le b_i, i = 1, 2. \dots, m \right\}.
\end{equation*} 
The collection of all MIAC-R sets is denoted (MIAC-R).

Finally, a \emph{DMIAC set} (short for disjunctive mixed-integer affine \ch set) is a set that can be written as the disjunction of finitely many MIAC sets. The set  $S\subseteq \R^n$ is a DMIAC set if there exists a $m,t \in \N$, affine functions $f_{ki}$ for $k =1, 2, \dots, t$ and $i = 1, 2, \dots, n$, subsets $I_k \subseteq \{1,2,\dots,n\}$ for $k = 1, 2, \dots, t$ such that 
\begin{equation*}
S  =  \bigcup_{k = 1}^t \{ x \in \R^n  : f_{ki}(x) \le b_i, i = 1, 2, \ldots, m, \,  x_{kj}  \in \Z,  \, j \in I_{k} \}
\end{equation*}
The collection of all DMIAC sets is denoted (DMIAC). 

We now  state the main theorem in this paper.

\begin{theorem}\label{theorem:main} The following relationships exist among the sets defined above
\begin{equation*}
(\text{LP}) =   (\text{LP-R})  \subsetneq  (\text{MILP})  \subsetneq  (\text{MILP-R})  =   (\text{AC})   =  (\text{AC-R})  = (\text{MIAC})  =  (\text{MIAC-R}) \subsetneq (\text{DMIAC}).
\end{equation*}
\end{theorem}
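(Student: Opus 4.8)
The plan is to establish the chain of equalities and strict inclusions in pieces, leveraging Theorem~\ref{theorem:jeroslow-lowe} as the geometric anchor. The leftmost equality $(\text{LP}) = (\text{LP-R})$ is classical Fourier--Motzkin elimination, already cited in the text, and the strict inclusion $(\text{LP}) \subsetneq (\text{MILP})$ is witnessed by any set requiring integrality, e.g. $\Z \subseteq \R$. The strict inclusion $(\text{MILP}) \subsetneq (\text{MILP-R})$ is also noted in the excerpt, with a reference to Williams for a witnessing example. The genuinely new content is the block of equalities
\[
(\text{MILP-R}) = (\text{AC}) = (\text{AC-R}) = (\text{MIAC}) = (\text{MIAC-R}),
\]
together with the final strict inclusion $(\text{MIAC-R}) \subsetneq (\text{DMIAC})$. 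I would prove these by a cycle of inclusions rather than pairwise.

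For the equality block, I would first show $(\text{MILP-R}) \subseteq (\text{AC})$: given an MILP-R set, use the Jeroslow--Lowe form $S = \bigcup_{i=1}^k P_i + \intcone\{r^1,\dots,r^t\}$ and construct affine \ch inequalities whose common solution set is exactly $S$. This is the conceptual heart of the paper and presumably follows from the variable-elimination machinery of Section~\ref{s:variable-elimination} or the constructions of Section~\ref{s:MILP-as-MIC}; I would cite the relevant earlier theorem (e.g. Theorem~\ref{theorem:b-j-mod} and its consequences). Next, the trivial containments $(\text{AC}) \subseteq (\text{AC-R})$, $(\text{AC}) \subseteq (\text{MIAC})$, $(\text{MIAC}) \subseteq (\text{MIAC-R})$, and $(\text{AC-R}) \subseteq (\text{MIAC-R})$ hold because each class is obtained from a smaller one by allowing an extra operation (projection, or integrality, or both) that can be taken to be vacuous. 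To close the cycle it then suffices to prove $(\text{MIAC-R}) \subseteq (\text{MILP-R})$: every mixed-integer affine \ch representable set is MILP-representable. I would argue this by structural induction on the affine \ch functions appearing in the defining system, introducing one fresh variable per ceiling node --- a variable $w$ with $w \in \Z$ and $g \le w \le g + 1$ roughly encodes $w = \lceil g \rceil$ when combined with the surrounding inequalities --- thereby rewriting each affine \ch inequality as finitely many linear inequalities in the original plus auxiliary (some integer) variables; projecting out the auxiliaries and the already-projected variables of the MIAC-R description shows the set is MILP-R. Care is needed because the ceiling encoding $w = \lceil g \rceil$ is not itself a single linear system, but the one-sided direction actually needed for an \emph{inequality} $f(x) \le b$ (monotone in its ceilings since the outer coefficients are nonnegative) can be captured with linear constraints; this monotonicity, guaranteed by the $\Q_+$ labels in Definition~\ref{definition:ch-functions}, is exactly what makes the reduction go through.

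The final strict inclusion $(\text{MIAC-R}) \subsetneq (\text{DMIAC})$ has two parts. Containment $(\text{MIAC-R}) \subseteq (\text{DMIAC})$: an MIAC-R set is a projection of an MIAC set, and projecting out continuous variables from an affine \ch system can be done via a Fourier--Motzkin-style elimination that produces a disjunction of affine \ch systems (this is essentially the Williams--Hooker phenomenon alluded to in the introduction), while projecting out an integer variable splits into a disjunction over residues; either way the result lands in (DMIAC). Strictness is established by exhibiting a single DMIAC set that is not MILP-R (hence, by the equality block, not MIAC-R); the excerpt already promises such a witness in Example~\ref{ex:dmic-is-too-big}, so I would simply invoke it, noting that a finite union of MILP-R sets need not be MILP-R --- the Jeroslow--Lowe characterization is not closed under arbitrary finite unions because the recession directions $\intcone\{r^1,\dots,r^t\}$ must be common across all the polytope pieces.

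I expect the main obstacle to be the inclusion $(\text{MIAC-R}) \subseteq (\text{MILP-R})$, specifically making the ceiling-elimination reduction fully rigorous: one must verify that replacing each ceiling by an auxiliary integer variable with the appropriate two-sided linear bounds does not enlarge the solution set, and that the bookkeeping over nested ceilings (handled cleanly by the binary-tree formalism of Definition~\ref{definition:ch-functions} and its depth/ceiling-count parameters) terminates. The monotonicity afforded by nonnegative coefficients is the crucial structural fact; without it --- i.e. if $-f$ were also allowed --- the reduction would fail, which is consistent with the remark in the excerpt that the direction of an affine \ch inequality matters. Everything else is either classical or a direct appeal to the constructions developed earlier in the paper.
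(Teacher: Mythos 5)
Your proposal is correct and follows essentially the same route as the paper: the classical facts for (LP), (LP-R), (MILP), the hard containments $(\text{MILP-R}) \subseteq (\text{AC})$ via the \ch-function consistency tester (Theorem~\ref{theorem:b-j-mod}) and $(\text{MIAC}) \subseteq (\text{MILP-R})$ via one-sided ceiling replacement by auxiliary integer variables (exactly the mechanism of Theorem~\ref{theorem:PossibleChvatal} and Lemma~\ref{lem:reduce-ceil-count}), the trivial projection/integrality containments to close the cycle, and Example~\ref{ex:dmic-is-too-big} for strictness against (DMIAC). The only inessential deviation is your elimination-based argument for $(\text{MIAC-R}) \subseteq (\text{DMIAC})$, which is not needed: once the equality block is in place, $(\text{MIAC-R}) = (\text{MIAC}) \subseteq (\text{DMIAC})$ is immediate, as in the paper.
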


The first three relationships in Theorem~\ref{theorem:main} are well-known in the optimization community. The key insights in our paper are the remaining relationships, i.e.,   
\begin{equation*}
(\text{MILP-R})  =  (\text{AC}) = (\text{AC-R}) = (\text{MIAC}) =  (\text{MIAC-R}) \subsetneq (\text{DMIAC}).
\end{equation*}
Section~\ref{ss:all-put-together} contains our proof of Theorem~\ref{theorem:main}, which builds on the results of Sections~\ref{ss:guanyi-trick-direction} and~\ref{ss:theory-heavy-direction}. Before going to the proof, we examine Theorem~\ref{theorem:main} and draw out its implications. This makes precise some of the informal discussion we had in the introduction. 

\begin{enumerate}[label=(\roman*)]
\item The relationship $(\text{MILP-R}) = (\text{AC})$ provides our algebraic characterization of mixed-integer linear representability. Note, in particular, that the class (AC) does not allow disjunctions.
\item The relationship $(\text{AC}) = (\text{MIAC-R})$ shows that adding integer variables and projecting an AC set does not yield additional modeling power.
\item The relationship $(\text{MILP-R}) \subsetneq (\text{DMIAC})$ shows that combining disjunctions with affine \ch inequalites does describe a strictly larger collection of sets than can be described (even through projection) by linear equalities with integer variables.
\end{enumerate}

Point (i) provides  a ``disjunction-free'' characterization of mixed-integer representability. Together, points (ii) and (iii) suggest that (AC) is a natural algebraic language for mixed-integer linear programming. The collection (AC) uses continuous variables, with no disjunctions, to describe all MILP sets and their projections, whereas (DMIAC) takes us outside of the realm of mixed-integer programming.

\section{The modeling power of \ch inequalities}\label{s:MILP-as-MIC}

This section contains the proof of our main result Theorem~\ref{theorem:main}. The proof is broken up across three subsections. The first two subsections provide careful treatment is the two most challenging containments to establish: $\text{(MIAC)} \subseteq \text{(MILP-R)}$ (the content of Section~\ref{ss:guanyi-trick-direction}) and $\text{(MILP-R)} \subseteq \text{(AC)}$ (the content of Section~\ref{ss:theory-heavy-direction}). Finally, in Section~\ref{ss:all-put-together}, the pieces are put together in a formal proof of Theorem~\ref{theorem:main}.

\subsection{MIAC sets are MILP-R sets }\label{ss:guanyi-trick-direction}

We show how to ``lift'' an MIAC set to a mixed-integer linear set. The idea is simple -- replace ceiling operators with additional integer variables. However, we need to work with an appropriate representation of an affine \ch function in order to implement this idea. The next result provides the correct representation.

\begin{theorem} \label{theorem:PossibleChvatal}
For every affine \ch function $f$ represented by a binary tree $T$, one of the following cases hold:
\begin{description}
 \item[Case 1:] $\cc(T) = 0$, which implies that $f$ is an affine linear function.
 \item[Case 2:] $f = \gamma \lceil g_1 \rceil + g_2$, where $\gamma >0$ and $g_1, g_2$ are affine \ch functions such that there exist binary tree representations $T_1, T_2$ for $g_1, g_2$ respectively,  with $\cc(T_1) + \cc(T_2) + 1 \leq \cc(T)$.
 \end{description}
\end{theorem}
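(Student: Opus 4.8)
The plan is to prove the dichotomy by structural induction on the binary tree $T$ (equivalently, by induction on the number of nodes of $T$), where the two cases of the statement correspond exactly to $\cc(T)=0$ versus $\cc(T)\ge 1$. Two preliminary observations drive the argument. First, if $\cc(T)=0$ then $f$ is affine linear: this is a one-line sub-induction, since with no ceiling edges $f$ is built from affine linear leaves using only nonnegative scalings and sums, and these operations preserve affine linearity — this is Case~1. Second, there is a trivial ``escape hatch'': for any affine linear $h$ (with rational coefficients) one has $h = 1\cdot\lceil 0\rceil + h$, which exhibits $h$ in the form of Case~2 with $\gamma=1$ and with single-leaf trees $T_1,T_2$ satisfying $\cc(T_1)=\cc(T_2)=0$; this will be used whenever a subtree carrying a positive ceiling count happens to compute an affine linear (or identically zero) function because a coefficient vanished.

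Now assume $\cc(T)\ge 1$ and inspect the root of $T$; it cannot be a leaf. If the root has a single child along a $\lceil\cdot\rceil$ edge, then $f=\lceil g\rceil$ where $g$ is computed by the child subtree $T'$ and $\cc(T)=\cc(T')+1$; taking $\gamma=1$, $(g_1,T_1)=(g,T')$, and $(g_2,T_2)=(0,\text{single leaf})$ gives $f=1\cdot\lceil g_1\rceil+g_2$ with $\cc(T_1)+\cc(T_2)+1=\cc(T')+1=\cc(T)$, so Case~2 holds with equality. If the root has a single child along a scalar edge $\alpha\in\Q_+$, then $f=\alpha g$ with $g$ computed by $T'$ and $\cc(T)=\cc(T')\ge 1$; applying the induction hypothesis to $T'$ writes $g=\gamma'\lceil g_1\rceil+g_2'$ with trees $T_1,T_2'$ and $\cc(T_1)+\cc(T_2')+1\le\cc(T')$, hence $f=(\alpha\gamma')\lceil g_1\rceil+\alpha g_2'$; if $\alpha>0$ we set $\gamma=\alpha\gamma'>0$ and let $T_2$ be $T_2'$ with a fresh $\alpha$-edge above it (so $\cc(T_2)=\cc(T_2')$), while if $\alpha=0$ then $f\equiv 0$ and we use the escape hatch, in either case obtaining $\cc(T_1)+\cc(T_2)+1\le\cc(T)$. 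Finally, if the root has two children along edges $a,b\in\Q_+$, then $f=ag+bh$ with $g,h$ computed by subtrees $T_L,T_R$ and $\cc(T)=\cc(T_L)+\cc(T_R)$; after possibly swapping the two sides assume $\cc(T_L)\ge\cc(T_R)$, so $\cc(T_L)\ge 1$. If $a>0$, apply the induction hypothesis to $T_L$ to get $g=\gamma'\lceil g_1\rceil+g_2'$, whence $f=(a\gamma')\lceil g_1\rceil+(ag_2'+bh)$; take $\gamma=a\gamma'>0$, $g_2=ag_2'+bh$, and realize $g_2$ by a two-child tree $T_2$ whose children are $T_2'$ under an $a$-edge and $T_R$ under a $b$-edge, so $\cc(T_2)\le\cc(T_2')+\cc(T_R)$ and $\cc(T_1)+\cc(T_2)+1\le\cc(T_L)+\cc(T_R)=\cc(T)$. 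If instead $a=0$, then $f=bh$ depends only on $T_R$: if $b>0$ and $\cc(T_R)\ge 1$ we recurse into $T_R$ exactly as in the scalar-edge case, and otherwise $f$ is affine linear (or zero) and the escape hatch gives $\cc(T_1)+\cc(T_2)+1=1\le\cc(T)$.

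Assembling these cases: in every branch we have produced $\gamma>0$, affine \ch functions $g_1,g_2$, and binary tree representations $T_1,T_2$ for them with $\cc(T_1)+\cc(T_2)+1\le\cc(T)$, which is Case~2; together with the $\cc(T)=0$ base case this proves the theorem. I expect the only real friction to be clerical rather than conceptual: one must be disciplined about the tree surgery so that the claimed $T_1,T_2$ are genuine binary-tree representations in the sense of Definition~\ref{definition:ch-functions} and so that ceiling counts add up correctly, and one must consistently route the degenerate situations — a ceiling buried under a zero coefficient, or a node both of whose relevant coefficients vanish — through the identity $\lceil 0\rceil=0$ so that the strict requirement $\gamma>0$ is never violated. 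An equivalent way to streamline the bookkeeping is to first preprocess $T$ into a tree $T'$ with $\cc(T')\le\cc(T)$ having no edge labeled $0$ (pruning any subtree under a $0$-edge to a single ``$0$'' leaf only removes ceiling edges), then run the induction on $T'$, keeping the escape hatch only for the one unavoidable case $f=\lceil c\rceil$ with $c$ constant.
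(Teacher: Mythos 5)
Your proof is correct and takes essentially the same route as the paper's: induction on the tree, case analysis at the root (ceiling edge, scalar edge, two children), lifting the child's Case-2 decomposition $g=\gamma'\lceil g_1'\rceil+g_2'$ and absorbing the sibling subtree into $g_2$, with the identical ceiling-count bookkeeping $\cc(T_1)+\cc(T_2)+1\le\cc(T)$. The only difference is that you explicitly handle zero edge labels via the device $f=1\cdot\lceil 0\rceil+f$, a degenerate situation the paper's proof silently excludes by taking all scalar labels positive.
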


\begin{proof}
We use induction on the depth of the binary tree $T$. For the base case, if $T$ has depth $0$, then $\text{cc}(T) = 0$ and we are in Case 1. The inductive hypothesis assumes that for some $k \geq 0$, every affine \ch function $f$ with a binary tree representation  $T$ of depth less or equal to $k$, can be expressed in Case 1 or 2.

For the inductive step, consider an affine \ch function $f$ with a binary tree representation $T$ of depth $k + 1$. If the root node of $T$ has a single child, let $T'$ be the subtree of $T$ with root node equal to the child of the root node of $T$. We now consider two cases: the edge at the root node is labeled with a $\lceil\cdot\rceil$, or the edge is labeled with a scalar $\alpha > 0$. In the first case, $f = \lceil g \rceil$ where $g$ is an affine \ch function which has $T'$ as a binary tree representation. Also, $\cc(T') + 1 = \cc(T)$. Thus, we are done by setting $g_1 = g$, $g_2 = 0$ and $\gamma = 1$. In the second case, $f = \alpha g$ where $g$ is an affine \ch function which has $T'$ as a binary tree representation, with $\cc(T') = \cc(T)$. Note that $T'$ has smaller depth than $T$. Thus, we can apply the induction hypothesis on $g$ with representation $T'$. If this ends up in Case 1, then $0 = \cc(T') = \cc(T)$ and $f$ is in Case 1. Otherwise, we obtain $\gamma' > 0$, affine \ch functions $g'_1, g'_2$, and binary trees $T'_1, T'_2$ representing $g'_1, g'_2$ respectively, with \begin{equation}\label{eq:cc-single-edge}\cc(T'_1) + \cc(T'_2) + 1 \leq \cc(T') = \cc(T)\end{equation} such that $g = \gamma'\lceil g'_1 \rceil + g'_2$. Now set $\gamma = \alpha \gamma'$, $g_1 = g'_1$, $g_2 = \alpha g'_2$, $T_1 = T'_1$ and $T_2$ to be the tree whose root node has a single child with $T'_2$ as the subtree, and the edge at the root labeled with $\alpha$. Note that $\cc(T_2) = \cc(T'_2)$. Also, observe that $T_1, T_2$ represents $g_1, g_2$ respectively. Combined with~\eqref{eq:cc-single-edge}, we obtain that $\cc(T_1) + \cc(T_2) + 1 \leq \cc(T)$.

If the root node of $T$ has two children, let $S_1, S_2$ be the subtrees of $T$ with root nodes equal to the left and right child, respectively, of the root node of $T$. Then, $f = \alpha h_1 + \beta h_2$, where $\alpha, \beta>0$ and $h_1, h_2$ are affine \ch functions with binary tree representations $S_1, S_2$ respectively. Also note that the depths of $S_1, S_2$ are both strictly less than the depth of $T$, and

\begin{equation}\label{eq:cc-h1-h2-f}\cc(S_1) + \cc(S_2) = \cc(T)\end{equation}

By the induction hypothesis applied to $h_1$ and $h_2$ with representations $S_1, S_2$, we can assume both of them end up in Case 1 or 2 of the statement of the theorem. If both of them are in Case 1, then $\cc(S_1) = \cc(S_2) = 0$, and by~\eqref{eq:cc-h1-h2-f}, $\cc(T) = 0$. So $f$ is in Case 1.

Thus, we may assume that $h_1$ or $h_2$ (or both) end up in Case 2. There are three subcases, (i) $h_1, h_2$ are both in Case 2, (ii) $h_1$ is Case 2 and $h_2$ in Case 1, or (iii) $h_2$ in Case 2 and $h_1$ in Case 1. We analyze subcase (i), the other two subcases are analogous. This implies that there exists $\gamma'>0$, and affine \ch functions $g'_1$ and $g'_2$ such that $h_1 = \gamma' \lceil g'_1 \rceil + g'_2$, and there exist binary tree representations $T'_1, T'_2$ for $g'_1, g'_2$ respectively, such that \begin{equation}\label{eq:cc-g'1-g'2-h1}\cc(T'_1) + \cc(T'_2) + 1 \leq \cc(S_1).\end{equation} Now set $\gamma = \alpha\gamma'$, $g_1(x) = g'_1(x)$ and $g_2(x) = \alpha g'_2(x) + \beta h_2(x)$. Then $f = \gamma\lceil g_1 \rceil + g_2$. Observe that $g_2$ has a binary tree representation $T_2$ such that the root node of $T_2$ has two children: the subtrees corresponding to these children are $T'_2$ and $S_2$, and the edges at the root node of $T_2$ are labeled by $\alpha$ and $\beta$ respectively. Therefore, \begin{equation}\label{eq:cc-g2-g'2-h2}\cc(T_2) \leq \cc(T'_2) + \cc(S_2).\end{equation}
Moreover, we can take $T_1 = T'_1$ as the binary tree representation of $g_1$. We observe that \begin{equation*}\begin{array}{rcl} \cc(T_1) + \cc(T_2) + 1 &\leq &\cc(T'_1) + \cc(T'_2) + \cc(S_2) + 1 \\ & \leq & \cc(S_1) + \cc(S_2) =  \cc(T)\end{array}\end{equation*} where the first inequality is from the fact that $T_1 = T'_1$ and~\eqref{eq:cc-g2-g'2-h2}, the second inequality is from~\eqref{eq:cc-g'1-g'2-h1} and the final equation is~\eqref{eq:cc-h1-h2-f}.\end{proof}

For an MIAC set, where each associated affine \ch function is represented by a binary tree, the \emph{total ceiling count of this representation} is the sum of the ceiling counts of all these
binary trees. The next lemma shows how to reduce the total ceiling count of a MIAC set by one, in exchange for an additional integer variable.

\begin{lemma}\label{lem:reduce-ceil-count}
Given a system $C=\{(x,z) \in \R^n\times\mathbb{Z}^q: ~ f_i(x,z) \leq b_i\}$ of affine \ch inequalities with a total ceiling count $c \geq 1$, there exists a system $P=\{(x,z,\bar{z}) \in \R^n\times\mathbb{Z}^q \times \Z: ~ f'_i(x,z) \leq b'_i\}$ of affine \ch inequalities with a total ceiling count of at most $c-1$, and $C = \proj_{(x,z)}(P)$.
\end{lemma}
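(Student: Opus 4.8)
The plan is to use Theorem~\ref{theorem:PossibleChvatal} to isolate a single ceiling operation and replace it by an integer variable. Since the total ceiling count $c \geq 1$, at least one of the functions $f_i$ has a binary tree representation $T_i$ with $\cc(T_i) \geq 1$; fix such an index, say $i_0$. Applying Theorem~\ref{theorem:PossibleChvatal} to $f_{i_0}$, we cannot be in Case 1 (which would force $\cc(T_{i_0}) = 0$), so we are in Case 2: $f_{i_0} = \gamma \lceil g_1 \rceil + g_2$ with $\gamma > 0$ and binary tree representations $T_1, T_2$ of $g_1, g_2$ satisfying $\cc(T_1) + \cc(T_2) + 1 \leq \cc(T_{i_0})$.

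Next I would introduce a fresh integer variable $\bar z \in \Z$ intended to capture $\lceil g_1(x,z) \rceil$. The defining property of the ceiling is $\bar z = \lceil g_1 \rceil$ if and only if $g_1 \leq \bar z$ and $\bar z - 1 < g_1$, i.e. (using integrality of $\bar z$) $g_1 \leq \bar z \leq g_1 + 1$ with $\bar z \in \Z$; more robustly for a projection argument, I would impose only $g_1(x,z) \leq \bar z$ together with an upper-bounding constraint forcing $\bar z \leq \lceil g_1 \rceil$. The cleanest route: add the two affine \ch inequalities $g_1(x,z) - \bar z \leq 0$ and $\bar z - g_1(x,z) \leq 1$ — wait, the second is $-g_1 + \bar z \leq 1$, and $-g_1$ need not be an affine \ch function. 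So instead I would use the standard trick that works inside a projection: replace the occurrence of $\lceil g_1 \rceil$ in $f_{i_0}$ by $\bar z$, giving $f'_{i_0}(x,z,\bar z) := \gamma \bar z + g_2(x,z) \leq b_{i_0}$, and add the single inequality $g_1(x,z) \leq \bar z$. Since $\gamma > 0$ and we are minimizing the left-hand side pressure, for any $(x,z)$ feasible in $C$ the choice $\bar z = \lceil g_1(x,z)\rceil$ is feasible in $P$ and achieves $\gamma\bar z + g_2 = \gamma\lceil g_1\rceil + g_2 = f_{i_0}(x,z) \leq b_{i_0}$; conversely, any $(x,z,\bar z)$ feasible in $P$ has $\bar z \in \Z$ and $\bar z \geq g_1(x,z)$, hence $\bar z \geq \lceil g_1(x,z) \rceil$, so $f_{i_0}(x,z) = \gamma\lceil g_1\rceil + g_2 \leq \gamma \bar z + g_2 \leq b_{i_0}$, and all other constraints are unchanged. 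This shows $C = \proj_{(x,z)}(P)$.

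Finally I would bookkeep the total ceiling count. The new system $P$ consists of: the unchanged inequalities $f_i \leq b_i$ for $i \neq i_0$ (with their original trees, total ceiling count $c - \cc(T_{i_0})$); the inequality $\gamma \bar z + g_2(x,z) \leq b_{i_0}$, for which a binary tree representation has ceiling count $\cc(T_2)$ (a two-child root with a scaling of $\bar z$ on one side and $T_2$ on the other, or simpler); and the inequality $g_1(x,z) - \bar z \leq 0$, with ceiling count $\cc(T_1)$ (take $T_1$ and combine with a leaf $-\bar z$). Hence the total ceiling count of $P$ is at most $(c - \cc(T_{i_0})) + \cc(T_2) + \cc(T_1) \leq (c - \cc(T_{i_0})) + (\cc(T_{i_0}) - 1) = c - 1$, as required.

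The main obstacle is a subtle one: making sure the replacement inequality $g_1(x,z) \leq \bar z$ really is an affine \ch inequality and that the direction of the inequality works out, given the earlier Remark that $-f$ need not be an affine \ch function when $f$ is. Here it is fine because we only ever need $g_1 - \bar z \leq 0$, and $g_1 - \bar z$ is an affine \ch function (add the leaf $-\bar z$ to the tree for $g_1$ with a unit label), so no negation of a genuine ceiling expression is needed. The only genuine care required is the equivalence argument for the projection, which hinges crucially on $\gamma > 0$ so that the substituted variable $\bar z$ can always be pushed down to $\lceil g_1 \rceil$; this monotonicity is exactly what Theorem~\ref{theorem:PossibleChvatal} was set up to guarantee.
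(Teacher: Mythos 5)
Your proposal is correct and follows essentially the same route as the paper: apply Theorem~\ref{theorem:PossibleChvatal} to write the offending constraint as $\gamma\lceil g_1\rceil + g_2 \le b_{i_0}$, introduce one integer variable $\bar z$, and replace that constraint by $g_1 - \bar z \le 0$ and $g_2 + \gamma\bar z \le b_{i_0}$, with the same ceiling-count bookkeeping. The paper phrases the projection equivalence as a chain of sandwich inequalities $\lceil g_1\rceil \le \bar z \le -(1/\gamma)g_2 + b_{i_0}/\gamma$ rather than your monotonicity argument, but the content is identical.
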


\begin{proof} Since $c\geq 1$, at least one of the $f_i$ has a binary tree representation $T$ with a strictly positive ceiling count. Without loss of generality we assume it is $f_1$. This means $f_1$, along with its binary tree representation $T$, falls in Case 2 of Theorem~\ref{theorem:PossibleChvatal}. 
Therefore, one can write $f$ as $f_1 = \gamma \lceil g_1 \rceil + g_2,$ with $\gamma >0$, and $g_1, g_2$ are affine \ch functions such that there exist binary tree representations $T_1, T_2$ for $g_1, g_2$ respectively,  with $\cc(T_1) + \cc(T_2) + 1 \leq \cc(T)$. Dividing by $\gamma$ on both sides, the inequality $f_1(x,z) \leq b_1$ is equivalent to $
        \lceil g_1(x,z) \rceil +(1/\gamma) g_2(x,z) \leq b_1/\gamma.$ Moving $(1/\gamma) g_2(x,z)$ to the right hand side, we get $
        \lceil g_1(x,z) \rceil \leq -(1/\gamma) g_2(x,z) + b_1/\gamma.$
      This inequality is easily seen to be equivalent to two inequalities, involving an extra integer variable $\bar{z} \in \mathbb{Z}$: $
        \lceil g_1(x,z) \rceil \leq \bar{z} \leq - (1/\gamma) g_2(x,z) + b_1 / \gamma, \label{eq:with-ceiling-case1}$
      which, in turn is equivalent to $ g_1(x,z) \leq \bar{z} \leq -(1/\gamma) g_2(x,z) + b_1 / \gamma,$ since $\bar{z} \in \mathbb{Z}$. Therefore, we can replace the constraint $f_1(x,z) \leq b_1$ with the two constraints
      \begin{align}
        & g_1(x,z) - \bar{z} \leq 0, \label{eq:g1}\\
        & (1/\gamma) g_2(x,z) + \bar{z} \leq b_1/ \gamma  \Leftrightarrow g_2(x,z) + \gamma \bar{z} \leq b_1\label{eq:g2}
      \end{align}
as long as we restrict $\bar{z} \in \Z$. Note that the affine \ch functions on the left hand sides of~\eqref{eq:g1} and~\eqref{eq:g2} have binary tree representations with ceiling count equal to $\cc(T_1)$ and $\cc(T_2)$ respectively. Since $\cc(T_1) + \cc(T_2) + 1 \leq \cc(T)$, the total ceiling count of the new system is at least one less than the total ceiling count of the previous system.
\end{proof}

The key result of this subsection is an immediate consequence.

\begin{theorem}\label{theorem:mic-is-milp}
Every MIAC set is a MILP-R set. That is, $(\text{MIAC}) \subseteq (\text{MILP-R})$. 
\end{theorem}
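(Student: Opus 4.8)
The plan is to induct on the total ceiling count of a chosen binary-tree representation of the defining \ch inequalities, stripping off one ceiling operator at a time via Lemma~\ref{lem:reduce-ceil-count} and using Theorem~\ref{theorem:PossibleChvatal} only to recognize the base case.

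Concretely: let $S \subseteq \R^n$ be an MIAC set, say $S = \{x \in \R^n : f_i(x) \le b_i,\ i = 1,\dots,m,\ x_j \in \Z,\ j \in I\}$, and fix binary tree representations $T_1,\dots,T_m$ of $f_1,\dots,f_m$; set $c := \sum_{i=1}^m \cc(T_i)$. If $c = 0$, then each $\cc(T_i) = 0$, so by Case~1 of Theorem~\ref{theorem:PossibleChvatal} every $f_i$ is affine linear; hence $S$ is a MILP set, and in particular (via the identity projection) a MILP-R set. If $c \ge 1$, relabel the coordinates so that the integer variables $x_j$, $j \in I$, form a block $z$ and the remaining variables form a block $x$, and apply Lemma~\ref{lem:reduce-ceil-count}: there is a system $P$ of affine \ch inequalities in $(x,z,\bar z)$ with $\bar z$ integer, equipped with binary tree representations of total ceiling count at most $c-1$, such that $S = \proj_{(x,z)}(P)$. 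Now $P$ is itself an MIAC set (its integer variables being $z$ together with $\bar z$), so by the induction hypothesis $P$ is MILP-R; write $P = \proj_{(x,z,\bar z)}(Q)$ for some MILP set $Q$. Since a composition of coordinate projections is again a coordinate projection,
\[
S = \proj_{(x,z)}(P) = \proj_{(x,z)}\bigl(\proj_{(x,z,\bar z)}(Q)\bigr) = \proj_{(x,z)}(Q),
\]
so $S$ is the projection of the MILP set $Q$ and therefore lies in $(\text{MILP-R})$. This closes the induction.

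The substantive work has already been carried out in Lemma~\ref{lem:reduce-ceil-count} (which is itself powered by Theorem~\ref{theorem:PossibleChvatal}); given those results, no genuine obstacle remains, and the proof above is a short induction. The only points needing care are bookkeeping ones: the ``total ceiling count'' is a feature of a fixed tuple of tree representations rather than of the functions themselves, so one must carry an explicit representation along the induction, and one uses the (immediate) fact that the class of MILP-R sets is closed under further coordinate projection. I would also note that this containment, combined with the reverse containments established in the subsequent subsections, is exactly what yields $(\text{MILP-R}) = (\text{MIAC})$ in Theorem~\ref{theorem:main}.
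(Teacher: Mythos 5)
Your induction is essentially the paper's own argument in different clothing: the paper simply applies Lemma~\ref{lem:reduce-ceil-count} ``at most $c$ times'' to drive the total ceiling count to zero and land on a linear system, which is exactly what your induction on $c$ does, so the substantive content matches.

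The one point you gloss over is the definitional wrinkle that the paper spends the second half of its proof on. After the ceilings are gone you have a set of the form $\{(x,z)\in\R^n\times\Z^q : Ax+Bz\ge d\}$ (your base case, and likewise the set whose projection you take), and you assert it is MILP-R ``via the identity projection.'' Under the paper's definition of (MILP-R), the target-space variables must be \emph{continuous}: $S=\proj_x\{(x,y,z)\in\R^n\times\R^p\times\Z^q : Bx+Cy+Dz\ge b\}$, with all integrality carried by the auxiliary block $z$. A MIAC/MILP set whose own coordinates are integer-constrained does not literally fit this template, so the identity projection is not enough. The fix is the standard duplication trick the paper performs explicitly: relax the integer-constrained target variables to be continuous, introduce fresh integer variables $v$, and add the equations $z=v$; then $S=\proj_{(x,z)}$ of this enlarged MILP set with only auxiliary variables integer. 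With that one sentence added (either in your base case or once at the end), your proof is complete and coincides with the paper's.
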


\begin{proof} Consider a system $S=\{(x,z) \in \R^n\times\mathbb{Z}^q: ~ f_i(x,z) \leq b_i\}$ of affine \ch inequalities describing the MIAC set, with total ceiling count $c \in \N$. Apply Lemma~\ref{lem:reduce-ceil-count} at most $c$ times to get a system $S' = \{(x,z,z') \in \R^n \times \Z^q\times \Z^m: Ax + Bz + Cz' \geq d\}$ such that $S = \proj_{(x,z)}(S')$, where $m \leq c$. The problem is that the $z$ variables are integer constrained in the system describing $S'$, and the definition of MILP-representability requires the target space -- $(x,z)$ in this case -- to have no integer constrained variables. This can be handled in a simple way. Define $S'' := \{(x,z,z',v)  \in \R^n \times \R^q\times \Z^m \times \Z^q) : Ax + Bz + Cz' \geq d, \;\; z = v \}$ with additional integer variables $v$, and observe that $S' = \proj_{(x,z,z')}(S'')$ and thus, $S = \proj_{(x,z)}(S'')$. Since $x,z$ are continuous variables in the system describing $S''$, we obtain that $S$ is MILP-representable.
\end{proof}


\begin{example}. We give an example, showing the above procedure at work. Consider the AC set
$$C = \{(x_1, x_2, x_3, x_4) \in \Z: f(x) = \lceil 3x_1 + 2.5x_2\rceil + \lceil\lceil 0.5x_3 \rceil  - 0.8x_4\rceil \leq 0\}.$$

Add variable $y_1 \in \Z$ and the constraints

$$\lceil\lceil 0.5x_3 \rceil  - 0.8x_4\rceil \leq y_1 \leq -\lceil 3x_1 + 2.5x_2\rceil.$$

Remove the outer $\lceil \cdot \rceil$ on the left hand side to obtain

$$\lceil 0.5x_3 \rceil  - 0.8x_4 \leq y_1 \leq -\lceil 3x_1 + 2.5x_2\rceil,$$ which gives two affine \ch inequalities:

\begin{equation}\label{eq:step-1} 
\begin{array}{l}
\lceil 0.5x_3 \rceil  - 0.8x_4 - y_1 \leq 0 \\
y_1 +\lceil 3x_1 + 2.5x_2\rceil \leq 0
\end{array}
\end{equation}

Taking the first affine \ch inequality in~\eqref{eq:step-1}, and introducing another variable $y_2 \in \Z$, we obtain 

$$\lceil 0.5x_3 \rceil  \leq y_2 \leq + 0.8x_4 + y_1$$
and removing the $\lceil\cdot\rceil$ on the left hand side, we obtain

$$0.5x_3 \leq y_2 \leq + 0.8x_4 + y_1,$$ giving rise to two new affine \ch functions:

\begin{equation}\label{eq:step-2-1} 
\begin{array}{l}
0.5x_3- y_2 \leq 0 \\
y_2 - 0.8x_4 - y_1 \leq 0
\end{array}
\end{equation}

Similarly, processing the second affine \ch inequality in~\eqref{eq:step-1}, we obtain two new affine \ch inequalities involving a new variable $y_3\in \Z$:

\begin{equation}\label{eq:step-2-1} 
\begin{array}{l}
3x_1 + 2.5x_2- y_3 \leq 0 \\
y_3 + y_1 \leq 0
\end{array}
\end{equation}

So we finally obtain that 
\begin{equation*}
C =\proj_{(x_1, \ldots, x_4)} \left\{(x_1,x_2,x_3,x_4,y_1,y_2,y_3,y_4): \begin{array}{lll} 0.5x_3- y_2  &\leq &0 \\ - 0.8x_4 - y_1 +y_2 & \leq & 0 \\
3x_1 + 2.5x_2- y_3 &\leq &0 \\
y_1 + y_3& \leq &0
\end{array}\right\}. \quad \triangleleft
\end{equation*}
\end{example}

\subsection{MILP-R sets are MIAC sets}\label{ss:theory-heavy-direction}

This direction leverages some established theory in integer programming, in particular,

\begin{theorem}[cf. Corollary 23.4 in Schrijver~\cite{schrijver86}]\label{theorem:b-j-mod}
For any rational $m \times n$ matrix $A$, there exists a finite set of Chv\'atal functions $f_i: \R^m \to \R$, $i\in I$ with the following property: for every $b\in \R^m$, $\{ z \in \Z^n  \, : \, Az \ge b\}$ is nonempty if and only if $f_i(b) \leq 0$ for all $i\in I$. Moreover, these functions can be explicitly constructed from the matrix $A$.
\end{theorem}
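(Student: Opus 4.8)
The plan is to reprove Schrijver's result by combining two ingredients: the classical theory of Chv\'atal--Gomory cutting planes (which shows that the integer hull of a rational polyhedron is reached after finitely many rounds of the closure operation, with the number of rounds --- the Chv\'atal rank --- bounded uniformly in terms of $A$ alone), and Fourier--Motzkin elimination to produce an explicit consistency certificate. First I would fix the rational matrix $A$ and think of $b$ as a parameter. Consider the parametric polyhedron $P(b) = \{x \in \R^n : Ax \ge b\}$. The key point is that the combinatorial ``type'' of $P(b)$ --- which subsets of rows are tight at vertices, the directions of edges and extreme rays --- is controlled by $A$ and does not depend on $b$; only the right-hand sides of the facets move. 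Consequently one can find, depending only on $A$, a finite list of Chv\'atal--Gomory derivations that, applied to the system $Ax \ge b$, produce a description of $\conv(P(b)\cap\Z^n)$ whose coefficient vectors are fixed (independent of $b$) and whose right-hand sides are Chv\'atal functions of $b$. Here one uses that a single Chv\'atal--Gomory cut $\lceil \lambda^\top A\rceil x \ge \lceil \lambda^\top b\rceil$ has a right-hand side that is, by construction, a depth-one Chv\'atal function of $b$ once $\lambda^\top A$ is integral; iterating this for a bounded number of rounds keeps the right-hand sides Chv\'atal functions (nonnegative combinations and ceilings compose), and the bound on the number of rounds is exactly where one invokes the finiteness of the Chv\'atal rank of the family $\{Ax\ge b\}$ (e.g.\ via the Cook--Gerards--Schrijver--Tardos bound, which depends only on $A$).

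Once we have a description $Bx \ge g(b)$ of $\conv(P(b)\cap\Z^n)$ with $B$ fixed and $g$ a vector of Chv\'atal functions of $b$, the original integer system is feasible if and only if this polyhedron is nonempty. Nonemptiness of $\{x : Bx \ge g(b)\}$ is tested by Fourier--Motzkin elimination: eliminating all $n$ variables from $Bx \ge g(b)$ yields finitely many inequalities $0 \ge u^k \cdot g(b)$ where the $u^k$ are the (nonnegative) FM multipliers, which again depend only on $B$, hence only on $A$. Since each $u^k$ is nonnegative, $u^k\cdot g(b)$ is a nonnegative combination of Chv\'atal functions of $b$, hence itself a Chv\'atal function of $b$; call it $f_k(b)$. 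Then $P(b)\cap\Z^n \ne \emptyset$ iff $f_k(b) \le 0$ for all $k$, which is the claimed characterization, and every step of the construction (the CG derivations, the rank bound, the matrix $B$, the FM multipliers) is explicit from $A$.

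I expect the main obstacle to be the bookkeeping that makes the Chv\'atal-function structure survive the cutting-plane rounds \emph{uniformly in $b$}: one must argue that the same finite set of CG derivations simultaneously produces the integer hull for \emph{all} right-hand sides $b$, not just for a fixed $b$, and that the closure can be reached by finitely many \emph{explicit} cuts rather than the (a priori infinite) full closure. This is precisely the subtle content behind Schrijver's Corollary~23.4, and it rests on the rationality of $A$ and the polyhedrality of the Chv\'atal closure together with the uniform rank bound. A secondary technical point is to handle the case where $P(b)$ is empty to begin with (so that the integer hull is trivially empty) --- this is absorbed cleanly because Fourier--Motzkin elimination of an infeasible linear system already yields an inequality $0 \ge f_k(b) > 0$, so no special treatment is needed. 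Everything else (composition of Chv\'atal functions under nonnegative combinations and ceilings, the mechanics of FM elimination) is routine given the definitions already set up in the excerpt.
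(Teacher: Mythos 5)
Your overall route is the same as the paper's (and Schrijver's): produce, uniformly in $b$, a description of the integer hull of $\{x : Ax \ge b\}$ whose left-hand-side coefficients are fixed and whose right-hand sides are Chv\'atal functions of $b$, then run Fourier--Motzkin elimination on that system and use nonnegativity of the multipliers to conclude that the resulting expressions $f_i(b)$ are Chv\'atal functions. However, there are two genuine gaps. First, the theorem is stated for \emph{every} $b \in \R^m$, including nonrational $b$, and this is not cosmetic generality: the paper needs it because in the proof of Theorem~\ref{theorem:milp-is-mic} the right-hand side $d' - A'x$ is typically nonrational. The rank bounds you invoke (Cook--Gerards--Schrijver--Tardos, Schrijver's Corollary~23.4) are proved for \emph{rational} polyhedra, i.e., rational $A$ \emph{and} rational $b$, so they do not apply as cited. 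Extending the bound to rational $A$ with arbitrary real $b$ is precisely the technical content of the paper's Lemma~\ref{lem:face-chvatal} and Theorems~\ref{thm:flatness-unbdd-poly}, \ref{thm:schrijver-23.3} and~\ref{theorem:schrijver-23.4}, which rest on the flatness theorem and the structure of maximal lattice-free convex sets; Example~\ref{ex:infinite-lattice-width} shows that rationality of $A$ is essential for the lattice-width bound, so this step must be proved, not cited.

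Second, your central uniformity claim --- that a single finite list of Chv\'atal--Gomory derivations, depending only on $A$, yields the integer hull of $P(b)$ for \emph{all} $b$ simultaneously, with right-hand sides that are Chv\'atal functions of $b$ --- is asserted (and flagged by you as ``the main obstacle'') but never established. The heuristic that the combinatorial type of $P(b)$ ``does not depend on $b$'' is false as stated: which rows are tight, and indeed whether $P(b)$ is empty, changes with $b$. The paper resolves this with Theorem~\ref{theorem:basu}: a single nonnegative matrix $U$, built from Hilbert bases of the cones generated by every subset of rows of $A$, makes $UAx \ge Ub$ a TDI description of $P(b)$ for every real $b$; then by Theorem~\ref{theorem:schrijver-23.1} each closure round is literally ``multiply by $U$ and round up the right-hand side'', which is what makes the right-hand sides Chv\'atal functions of $b$ uniformly, and the extended rank bound of Theorem~\ref{theorem:schrijver-23.4} terminates the iteration after a number of rounds depending only on $A$. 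Your concluding FM step, and the observation that nonnegative combinations and ceilings of Chv\'atal functions are again Chv\'atal functions, are fine and match the paper.
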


The main difference between Corollary 23.4 in~\cite{schrijver86} and Theorem~\ref{theorem:b-j-mod} is that we allow the right hand side $b$ to be nonrational.%
%
%
\footnote{We say a vector is \emph{nonrational} if it has at least one component that is not a rational number. We use this terminology instead of \emph{irrational}, which we take to mean having no rational components.}
%
%
This difference is indispensable in our analysis (see the proof of Theorem~\ref{theorem:milp-is-mic}). Although our proof of Theorem~\ref{theorem:b-j-mod} is conceptually similar to the approach in Schrijver~\cite{schrijver86}, we need to handle some additional technicalities related to irrationality. In particular, we extend the supporting results used to prove Corollary 23.4b(i) in Schrijver~\cite{schrijver86} to the nonrational case.
To our knowledge, no previous work has explicitly treated the case where $b$ is nonrational.

Theorem~\ref{theorem:b-j-mod} in the rational case was originally obtained by Blair and Jeroslow in \cite[Theorem~5.1]{blair82}), but used a different methodology. This work in turn builds on seminal work on integer programming duality by Wolsey in~\cite{wolsey1981,wolsey1981integer}. Wolsey showed that the family of subadditive functions suffices to give a result like Theorem~\ref{theorem:b-j-mod}; Blair and Jeroslow improved this to show that the smaller family of \ch functions suffice.

To prove Theorem~\ref{theorem:b-j-mod} we need some preliminary definitions and results. A system of linear inequalities $Ax \ge b$ where $A = (a_{ij})$ has $a_{ij} \in \Q$ for all $i,j$ (that is, $A$ is rational) is \emph{totally dual integral} (TDI) if the maximum in the LP-duality equation
\begin{equation*}
\min \{c^\top x : Ax \ge b \} = \max \{y^\top b : A^\top y = c, y \ge 0\}
\end{equation*}
has an integral optimal solution $y$ for every integer vector $c$ for which the minimum is finite. Note that rationality of $b$ is not assumed in this definition. When $A$ is rational, the system $Ax \ge b$ can be straightforwardly manipulated so that all coefficients of $x$ on the right-hand side are integer. Thus, we may often assume without loss that $A$ is integral.

For our purposes, the significance of a system being TDI is explained by the following result. For any polyhedron $P \subseteq \R^n$, $P'$  denotes its {\em \ch closure}\footnote{The \ch closure of $P$ is defined in the following way. For any polyhedron $Q\subseteq \R^n$, let $Q_I := \conv(Q \cap \Z^n)$ denote its integer hull. Then $P' := \bigcap\{H_I: H \textrm{ is a halfpsace containing }P\}$.}. We also recursively define the $t$-th \ch closure of $P$ as $P^{(0)} := P$, and $P^{(t+1)} = (P^{(t)})'$ for $i \geq 1$.

\begin{theorem}[See Schrijver \cite{schrijver86}  Theorem 23.1]\label{theorem:schrijver-23.1}
Let $P = \{  x \, : \, Ax \ge b \}$ be nonempty and assume $A$ is integral.  If $Ax \ge b$ is a TDI representation of the polyhedron $P$ then
\begin{eqnarray}
P' = \{  x \, : \, Ax \ge  \lceil b \rceil\}.  \label{eq:tdi-ch-closure}
\end{eqnarray}
\end{theorem}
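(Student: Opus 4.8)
Since the statement is (a slight strengthening of) Theorem~23.1 in Schrijver~\cite{schrijver86}, the plan is to follow his argument, proving the two inclusions $P' \subseteq \{x : Ax \ge \lceil b\rceil\}$ and $\{x : Ax \ge \lceil b\rceil\} \subseteq P'$ separately; only the second uses total dual integrality. For the first, TDI is not needed. For each row $a_i$ of $A$, the halfspace $H^{(i)} := \{x : a_i x \ge b_i\}$ contains $P$, so by definition of the \ch closure $P' \subseteq (H^{(i)})_I = \conv(H^{(i)} \cap \Z^n)$. Since $A$ is integral, $a_i z \in \Z$ for every $z \in \Z^n$, so on $\Z^n$ the inequality $a_i z \ge b_i$ is equivalent to $a_i z \ge \lceil b_i\rceil$; hence $H^{(i)} \cap \Z^n \subseteq \{x : a_i x \ge \lceil b_i\rceil\}$, and passing to convex hulls gives $(H^{(i)})_I \subseteq \{x : a_i x \ge \lceil b_i\rceil\}$. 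Intersecting over $i$ yields $P' \subseteq \{x : Ax \ge \lceil b\rceil\}$. Writing $Q := \{x : Ax \ge \lceil b\rceil\}$, note this already gives $P' \subseteq Q$, and clearly $Q \subseteq P$ because $\lceil b\rceil \ge b$.

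For the reverse inclusion I must show $Q \subseteq H_I$ for every halfspace $H = \{x : cx \ge \delta\}$ containing $P$. Rescaling $c$ and $\delta$ by a positive constant changes neither $H$ nor $H_I$, and $(\{cx \ge \delta\})_I$ only shrinks as $\delta$ increases, so -- granting a standard reduction that disposes of normal vectors $c$ that are not rational multiples of integral vectors (discussed below) -- it suffices to treat $c \in \Z^n$ primitive and $\delta = \min\{cx : x \in P\}$, which is finite since $P \subseteq H$ and $P \ne \emptyset$. For such $c$ and $\delta$ one has $H_I = \{x : cx \ge \lceil\delta\rceil\}$. By LP duality, $\delta = \min\{cx : x\in P\} = \max\{y^\top b : A^\top y = c,\ y \ge 0\}$, and since $Ax \ge b$ is TDI and $c$ is integral this maximum is attained at some integral $\bar y \ge 0$ with $A^\top \bar y = c$. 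Then for every $x \in Q$,
\[
cx \;=\; \bar y^\top(Ax) \;\ge\; \bar y^\top\lceil b\rceil ,
\]
where the inequality uses $\bar y \ge 0$ and $Ax \ge \lceil b\rceil$; and $\bar y^\top\lceil b\rceil$ is an integer (both $\bar y$ and $\lceil b\rceil$ are integral) that is at least $\bar y^\top b = \delta$, hence at least $\lceil\delta\rceil$. Thus $cx \ge \lceil\delta\rceil$, i.e.\ $x \in H_I$. Together with the first inclusion this gives $P' = Q = \{x : Ax \ge \lceil b\rceil\}$.

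The step I expect to be the real obstacle is the reduction, just used, to integral primitive normal vectors in the presence of a possibly \emph{nonrational} $b$ -- exactly where the argument must go beyond Schrijver's treatment of rational polyhedra. The ingredients I would assemble are: (i) $Q \subseteq P$, so any halfspace $H \supseteq P$ with $H_I \supseteq P$ needs no attention, and the case $Q = \emptyset$ is trivial; (ii) if $c$ is a positive rational multiple of an integral vector one simply rescales to the primitive integral case handled above; and (iii) if $c$ is \emph{not} such a multiple (then $n \ge 2$ and $c$ points in an ``irrational'' direction), a lattice-density argument gives $\conv(\Z^n \cap \{cx \ge \delta\}) \supseteq \{x : cx > \delta\}$, after which a complementary-slackness comparison of $\min\{cx : x \in Q\}$ with $\delta = \min\{cx : x \in P\}$ forces any point of $Q$ on the boundary hyperplane $\{cx = \delta\}$ to satisfy $a_i x = b_i$ for all rows $i$ in the support of a dual optimizer, with $b_i \in \Z$ for those $i$; such a point lies in the rational affine subspace cut out by these equalities, whose integer points must convexly span it. Pinning down this last claim -- in effect, extending to nonrational right-hand sides the supporting lemmas behind Corollary~23.4b(i) of Schrijver~\cite{schrijver86}, including the fact that the relevant subsystems retain an integer solution -- is the technical heart of this direction. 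By contrast, the rounding computation of the previous paragraph carries over verbatim to the nonrational case, precisely because an integral dual multiplier paired with the integral vector $\lceil b\rceil$ always produces an integer, regardless of whether $b$ is rational.
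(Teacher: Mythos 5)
The paper gives no proof of this statement at all---it is quoted directly from Schrijver (Theorem~23.1), with the only implicit claim being that Schrijver's argument survives a nonrational right-hand side $b$. Your first two paragraphs are precisely Schrijver's proof (translated to $\ge$/ceilings): the easy inclusion $P' \subseteq \{x : Ax \ge \lceil b\rceil\}$ from integrality of $A$, and the reverse inclusion via an integral optimal dual $\bar y$ supplied by TDI, with the observation that $\bar y^\top \lceil b \rceil$ is an integer at least $\bar y^\top b = \delta$. That rounding computation indeed never uses rationality of $b$, so for the closure taken over halfspaces with integral (equivalently rational) normals---which is Schrijver's definition and evidently what the paper intends by its citation---your proof is complete and is the same argument the paper is leaning on.

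The part you flag as unfinished is real, but it is created by the paper's footnote definition of $P'$ as an intersection of $H_I$ over \emph{all} halfspaces containing $P$, not just rational ones; the paper never addresses this mismatch, so on this point you are being more careful than the source you are asked to reproduce. Your sketch for irrational normals is on the right track but leaves two claims unproved: (a) that $\conv(\Z^n \cap H) \supseteq \intt(H)$ when the normal of $H$ is not a rational multiple of an integral vector, and (b) that the affine subspace $L = \{x : a_i x = b_i,\ i \in S\}$ picked out by complementary slackness contains an integer point. Claim (b) is where TDI must enter, and it can be closed as follows: if $\bar x \in Q$ lies on the boundary of $H$, then \emph{every} constraint tight at $\bar x$ has integral right-hand side (since $a_j\bar x = b_j$ and $a_j \bar x \ge \lceil b_j \rceil$ force $b_j \in \Z$); for any integral $w$ in the cone generated by the tight rows, $\bar x$ minimizes $w^\top x$ over $P$, so TDI yields an integral dual supported on the tight set and hence $w^\top \bar x \in \Z$; this verifies the hypotheses of the integral Farkas lemma for the system $\{a_j x = b_j,\ j \textrm{ tight at } \bar x\}$, which therefore has an integral solution, and the integer points of that rational affine subspace convexly span it, giving $\bar x \in H_I$. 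Claim (a) likewise needs an honest lattice/equidistribution (or flatness-type) argument rather than a one-line appeal to density. So: relative to what the paper actually asserts (Schrijver's theorem with nonrational $b$), your proof is correct and is the same proof; the residual gap concerns only the paper's broader, nonstandard definition of the closure, you identify it honestly, and it is fixable along the lines above rather than being a dead end.
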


We now show how to manipulate the system $Ax \ge b$ to result in one that is TDI. The main power comes from the fact that this manipulation depends only on $A$ and works for every right-hand side $b$.

\begin{theorem}\label{theorem:basu}
Let $A$ be a rational $m \times n$ matrix.  Then there exists another nonnegative $q \times m$ rational matrix $U$ such that  for every $b \in \R^m$ the polyhedron $P = \{  x \in \R^{n} \, : \,  Ax \ge b \},$  has a representation $P = \{  x \in \R^{n} \, : \,  Mx \ge b' \}$ where the system $Mx \ge b'$ is TDI and $M = UA,$  $b' = Ub.$
\end{theorem}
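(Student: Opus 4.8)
The plan is to produce the matrix $U$ by a Hilbert-basis / finite-generation argument applied to the finitely many faces of the polyhedral cone structure determined by $A$, so that the resulting system is TDI for *every* right-hand side simultaneously. First I would recall the standard characterization: a system $Mx \ge b'$ with $M$ integral is TDI if and only if for each face $F$ of $P = \{x : Mx \ge b'\}$, the set of rows of $M$ active on $F$ generates (as a nonnegative integer combination, i.e.\ as an integral Hilbert basis) the cone of all integral vectors $c$ for which $\min\{c^\top x : x \in P\}$ is attained on $F$. The crucial point — and the reason the construction can depend on $A$ alone — is that the face \emph{lattice} of $P=\{x: Ax\ge b\}$, together with which rows of $A$ are active on each face and which cone of objective vectors is optimized over each face, depends only on $A$ and not on $b$; changing $b$ only translates faces or makes some of them empty, neither of which affects the Hilbert-basis condition. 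So I would enumerate the finitely many index sets $J \subseteq \{1,\dots,m\}$ that arise as "active sets" of faces (equivalently, those $J$ for which $\{x : a_i^\top x = b_i,\, i\in J;\ a_i^\top x\ge b_i,\, i\notin J\}$ can be nonempty for some $b$ — a condition on $A$ alone), and for each such $J$ compute an integral Hilbert basis $h^1_J,\dots,h^{k_J}_J$ of the cone $\cone\{a_i : i \in J\} \cap \Z^m$ generated by the corresponding rows.

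Next, each Hilbert basis element $h^\ell_J$ is a \emph{nonnegative rational} combination of the rows $\{a_i : i\in J\}$ (indeed a nonnegative integer combination if $A$ is integral, after the usual scaling), say $h^\ell_J = \sum_{i\in J} \lambda_i a_i$ with $\lambda_i \ge 0$; I collect all the coefficient vectors $\lambda$ (padded with zeros outside $J$) as the rows of $U$. This $U$ is nonnegative and rational and has some finite number $q$ of rows. Set $M = UA$ and $b' = Ub$. By construction each row of $M$ is one of the Hilbert-basis vectors $h^\ell_J$, and because $\lambda$ is a \emph{feasible} combination, $Mx \ge b'$ is implied by $Ax\ge b$; conversely the original rows $a_i$ themselves lie in every cone they generate, so (after also including the rows of $A$ among the generators, or noting $a_i$ is itself a Hilbert basis element of its own ray) $Mx\ge b'$ implies $Ax\ge b$, giving $P = \{x : Mx\ge b'\}$ for every $b$. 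Finally I would verify TDI directly: take an integral $c$ with $\min\{c^\top x : x\in P\}$ finite, let $F$ be the optimal face and $J$ its active set; then $c \in \cone\{a_i : i\in J\}\cap\Z^m$, so $c = \sum_\ell \mu_\ell h^\ell_J$ with $\mu_\ell \in \Z_{\ge 0}$; reading off the corresponding rows of $M$ and the multipliers $\mu_\ell$ gives an integral optimal dual solution, which is exactly the TDI condition.

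The main obstacle I anticipate is bookkeeping around irrationality of $b$ and the precise notion of "active set of a face that can occur for some $b$". When $b$ is nonrational some faces that are nonempty for rational $b$ may become empty or lower-dimensional, but the key fact I need is only one direction: every face of $\{x:Ax\ge b\}$ has an active set $J$ drawn from the finite $A$-determined list, so the Hilbert bases computed from $A$ suffice. I also need that "$\min\{c^\top x : x\in P\}$ finite $\Rightarrow$ the optimum is attained on a genuine face with a well-defined active set", which holds for any nonempty polyhedron regardless of the rationality of $b$ (the recession cone, hence the set of $c$ with finite minimum, depends only on $A$). Care is also needed to ensure $M$ is integral when we want to invoke Theorem~\ref{theorem:schrijver-23.1}: since $A$ is rational we first scale rows so $A$ is integral, then the Hilbert basis vectors are integral, so $M$ is integral and only $b'=Ub$ inherits the (possible) irrationality of $b$ — which is exactly the setting Theorem~\ref{theorem:schrijver-23.1} is stated for. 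Everything else is the standard Hilbert-basis characterization of TDI systems (Schrijver, \S22.3), applied uniformly over the finite face structure of $A$.
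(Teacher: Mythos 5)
Your proposal is correct and is essentially the paper's own argument: the paper likewise builds $U$ from integral Hilbert bases of the cones $\cone\{a^i : i \in S\}$ (taking all subsets $S \subseteq \{1,\dots,m\}$ rather than only realizable active sets, an immaterial difference), sets $M=UA$, $b'=Ub$, and verifies TDI directly by using LP duality and complementary slackness to place an integral objective $c$ in the cone of rows tight at an optimum and then decomposing it integrally over the corresponding Hilbert basis. The only caveat is your motivational claim that the face lattice of $\{x: Ax \ge b\}$ is independent of $b$, which is false as stated, but your construction never actually relies on it (and note the cones live in $\R^n$, not $\R^m$).
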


\begin{proof}
First construct the matrix $U.$  Let  ${\cal P}(\{1, 2, \ldots, m \})$  denote the power set of  $\{1, 2, \ldots, m\}.$   For each subset of rows $a^i$  of  $A$ with $i  \in S$  where    $S \in {\cal P}(\{1, 2, \ldots, m \}),$   define the cone
\begin{eqnarray}
C(S) := \{  a \in \R^n \, : \,  a = \sum_{i \in S} u_i a^i,  u _i \ge 0,  i \in S \}. \label{eq:defineCS}
\end{eqnarray}
By construction the cone $C(S)$ in~\eqref{eq:defineCS}  is a rational polyhedral cone.  Then by Theorem 16.4 in Schrijver \cite{schrijver86} there exist integer vectors  $m^{k},$ for $k = k^{S}_{1},  k^{S}_{2} \ldots, k^{S}_{q_{S}}$   that define a  Hilbert basis for this cone.   In this indexing scheme  $q_{S}$ is the cardinality of the set $S.$  We assume  that there are  $q_{S}$ distinct indexes $ k^{S}_{1},  k^{S}_{2} \ldots, k^{S}_{q_{S}}$  assigned to each set $S$ in the power set  ${\cal P}(\{1, 2, \ldots, m \}).$  Since  each $m^{k} \in C(S)$  there is  a nonnegative nonnegative  vector $u^{k}$ that generates $m^{k}$. Without loss  each $u^{k}$ is an $m-$dimensional vector since we can assume a component of zero for each component $u^{k}$ not indexed by  $S.$   Thus   $u^{k} A = m^{k}.$   Define a matrix  $U$ to be the matrix with rows $u^{k}$ for all $k =   k^{S}_{1},  k^{S}_{2} \ldots, k^{S}_{q_{S}}$ and $S \in {\cal P}(\{1, 2, \ldots, m \}).$   Then $M = UA$ is a matrix with rows corresponding to all of the Hilbert bases for the power set of $\{ 1, 2, \ldots, m \}.$  That is, the number of rows in $M$ is $q = \sum_{S \in  {\cal P}(\{1, 2, \ldots, m \})} q_{S}.$
\vskip 5pt
We first show that $M x \ge b'$ is a TDI representation of
\begin{eqnarray}
P =  \{  x \in \R^{n} \, : \,  Ax \ge b \}  = \{  x \in \R^{n} \, : \,  Mx \ge b' \}.  \label{eq:equal-poly}
\end{eqnarray}
Note that  $\{  x \in \R^{n} \, : \,  Ax \ge b \}$  and $\{  x \in \R^{n} \, : \,  Mx \ge b' \}$  define the same polyhedron since the system of the inequalities $Mx \ge b'$ contains all of the inequalities $Ax \ge b$ (this is because the power set of $\{1, 2, \ldots, m \}$ includes each singleton set) plus additional inequalities that are nonnegative aggregations of inequalities in the system  $Ax \ge b.$  In order to show $Mx \ge b'$ is a TDI representation, assume $c \in \R^n$ is an integral vector and the minimum of
\begin{eqnarray}\label{eq:tdi}
\max \{ y b' \, : \,  y M = c, y \ge 0 \}
\end{eqnarray}
is finite. It remains to show there is an integral optimal dual solution to \eqref{eq:tdi}. By linear programming duality $\min \{ c x | M x \ge b' \}$ has an optimal solution $\bar x$ and
\begin{eqnarray}
\max \{ y b' \, : \,  y M = c, y \ge 0 \} =  \min \{ c x \, : \,  M x \ge b' \}.  \label{eq:tdi-1}
\end{eqnarray}
Then by equation~\eqref{eq:equal-poly}
\begin{eqnarray}
 \min \{ c x \, : \,  M x \ge b' \} =  \min \{ c x \, : \,  Ax \ge b\}.   \label{eq:tdi-2}
\end{eqnarray}
and    $\min \{ c x \, : \,  A x \ge b \}$  also has optimal solution  $\bar x.$ Then again by linear programming duality
\begin{eqnarray}
 \min \{ c x \, : \,  Ax \ge b\}  = \max \{ u b  \,  :  \,  u A = c,  \, u \ge 0\}.
\end{eqnarray}
Let $\bar u$ be an optimal dual solution to $ \max \{ u b  \,  :  \,  u A = c,  \, u \ge 0\}.$  Let $i$  index the strictly positive $\bar u_i$  and define  $S = \{ i \, : \,  \bar u_i > 0 \}.$   By construction of  $M$  there is a subset of rows of $M$ that form a Hilbert basis for $C(S).$  By construction of $C(S),$ $\bar u A = c$ implies $c \in C(S).$  Also, since $\bar u$ is an optimal dual solution,  it must satisfy complementary slackness. That is, $\bar u_{i} > 0$ implies that  $ a^{i} \bar x = b_i.$  Therefore $S$ indexes a set of tight constraints in the system $A \bar x \ge b.$ Consider an arbitrary element  $m^{k}$ of the Hilbert basis associated with the cone $C(S)$. There is a corresponding
  $m-$vector $u^{k}$ with support in $S$ and
\begin{equation*}
u^{k} A \bar x = \sum_{i \in S} u^{k}_{i}a^i \bar x = \sum_{i \in S} u^{k}_{i} b_i = u^{k} b = b'_{k}.
\end{equation*}
Since $u^{k} A = m^{k}$ and $u^{k} b = b'_{k}$ we have
\begin{equation}\label{eq:this-is-useful}
 m^{k} \bar x = b'_{k},   \quad \forall k = k^{S}_{1},  k^{S}_{2} \ldots, k^{S}_{q_{S}}.
\end{equation}
As argued above, $c \in C(S)$ and is, therefore, generated by nonnegative integer multiples of the $m^{k}$ for  $k = k^{S}_{1},  k^{S}_{2} \ldots, k^{S}_{q_{S}}.$   That is, there exist nonnegative integers $\bar y_{k}$  such that
\begin{equation}\label{eq:write-out-m-k}
c = \sum_{k  =  k^{S}_{1}}^{k^{S}_{q_{S}}} \bar y_{k} m^{k}.
\end{equation}

Hence there exists a nonnegative $q$-component integer vector $\bar y$ with support contained in the  set  indexed by  $k^{S}_{1},  k^{S}_{2} \ldots, k^{S}_{q_{S}}$
 such that
\begin{equation}\label{eq:write-out-c}
c = \bar y M.
\end{equation}
Since $\bar y  \ge 0,$  $\bar y$ is feasible to the left hand side of~\eqref{eq:tdi-1}.   We use \eqref{eq:this-is-useful} and \eqref{eq:write-out-m-k} to show
\begin{equation}\label{eq:zero-duality-gap}
\bar y b' = c \bar x,
\end{equation}
which implies that $\bar y$ is an optimal integral dual solution to \eqref{eq:tdi} (since $\bar x$  and  $\bar y$ are  primal-dual feasible), implying the result.

To show \eqref{eq:zero-duality-gap},  use the fact that the support of $\bar y$ is  contained in the  set  indexed by  $k^{S}_{1},  k^{S}_{2} \ldots, k^{S}_{q_{S}}$ which implies
\begin{eqnarray}
\bar y b' = \sum_{k  =  k^{S}_{1}}^{k^{S}_{q_{S}}} \bar y_{k} b'_{k}.
\end{eqnarray}
Then by \eqref{eq:this-is-useful}  substituting $m^{k} \bar x$ for $b'_{k}$ gives
\begin{eqnarray}
\bar y b' = \sum_{k  =  k^{S}_{1}}^{k^{S}_{q_{S}}} \bar y_{k} b'_{k} =  \sum_{k  =  k^{S}_{1}}^{k^{S}_{q_{S}}} \bar y_{k} m^{k} \bar x.
\end{eqnarray}
Then by \eqref{eq:write-out-m-k} substituting $c$ for  $\sum_{k  =  k^{S}_{1}}^{k^{S}_{q_{S}}} \bar y_{k} m^{k} $  gives
\begin{eqnarray}
\bar y b' = \sum_{k  =  k^{S}_{1}}^{k^{S}_{q_{S}}} \bar y_{k} b'_{k} =  \sum_{k  =  k^{S}_{1}}^{k^{S}_{q_{S}}} \bar y_{k} m^{k} \bar x = c \bar x.
\end{eqnarray}

This gives  \eqref{eq:zero-duality-gap} and completes the proof.\end{proof}

\begin{remark}
When $S$ is a singleton set, i.e. $S = \{ i\}$, the corresponding $m^{k}$ for $k = k^{S}_{1}$ may be a scaling of the corresponding $a^{i}.$  This does not affect our argument that~\eqref{eq:equal-poly} holds. \quad $\triangleleft$
\end{remark}

\begin{remark}
Each of the $m^{k}$ vectors that define each Hilbert basis may be assumed to be integer.  Therefore if $A$ is an integer matrix,  $M$ is an integer matrix. \quad $\triangleleft$
\end{remark}

Next we will also need a series of results about the interaction of lattices and convex sets.

\begin{definition} Let $V$ be a vector space over $\R$. A {\em lattice} in $V$ is the set of all integer combinations of a linearly independent set of vectors $\{a^1, \ldots,  a^m\}$ in $V.$ The set $\{a^1, \ldots,  a^m\}$ is called the basis of the lattice. The lattice is {\em full-dimensional} if it has a basis that spans $V$. \quad $\triangleleft$
\end{definition}

\begin{definition} Given a full-dimensional lattice $\Lambda$ in a vector space $V$, a {\em $\Lambda$-hyperplane} is an affine hyperplane $H$ in $V$ such that $H = \aff(H \cap \Lambda)$. This implies that in $V = \R^n$, if $H$ is a $\Z^n$-hyperplane, then $H$ must contain $n$ affinely independent vectors in $\Z^n$. \quad $\triangleleft$
\end{definition}

\begin{definition} Let $V$ be a vector space over $\R$
and let $\Lambda$ be a full-dimensional lattice in $V$. Let $\mathcal{H}_{\Lambda}$ denote the set of all $\Lambda$-hyperplanes that contain the origin.
Let $C \subseteq V$ be a convex set. Given any $H \in \mathcal{H}_{\Lambda}$, we say that the {\em $\Lambda$-width of $C$ parallel to $H$}, denoted by $\ell(C,\Lambda, V, H)$, is the total number of distinct $\Lambda$-hyperplanes parallel to $H$ that have a nonempty intersection with $C$. The {\em lattice-width} of $C$ with respect to $\Lambda$ is defined as $\ell(C,\Lambda, V) := \min_{H \in \mathcal{H}_{\Lambda}} \ell(C,\Lambda, V, H)$. \quad $\triangleleft$
\end{definition}

We will need this classical ``flatness theorem" from the geometry of numbers -- see Theorem VII.8.3 on page 317 of Barvinok~\cite{barvinok}, for example. 

\begin{theorem}\label{thm:flatness} Let $V\subseteq \R^n$ be a vector subspace with $\dim(V) = d$, and let $\Lambda$ be a full-dimensional lattice in $V$. Let $C \subseteq V$ be a compact, convex set. If $C \cap \Lambda = \emptyset$, then $\ell(C,\Lambda, V) \leq d^{5/2}$.
\end{theorem}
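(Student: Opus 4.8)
This is the classical flatness (Khinchine) theorem, so the natural route is the standard \emph{ellipsoid sandwich plus transference} argument, and I would set it up to prove the contrapositive: if $\ell(C,\Lambda,V)$ is large, then $C\cap\Lambda\neq\emptyset$. Two preliminary reductions come first. Choosing a basis of $\Lambda$ gives a linear isomorphism $\phi\colon V\to\R^d$ with $\phi(\Lambda)=\Z^d$; since $\phi$ carries $\Lambda$-hyperplanes to $\Z^d$-hyperplanes and preserves parallelism, incidence with $C$, convexity, compactness, and disjointness from the lattice, we have $\ell(C,\Lambda,V)=\ell(\phi(C),\Z^d,\R^d)$, so we may assume $V=\R^d$, $\Lambda=\Z^d$. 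Second, if $C$ is not full-dimensional, replace it by $C+\varepsilon B_2$ (with $B_2$ the Euclidean unit ball): as $C$ is compact and disjoint from $\Z^d$ we have $\mathrm{dist}(C,\Z^d)>0$, so for small $\varepsilon$ this body is still lattice-free, it is now full-dimensional and compact, and its lattice-width is no smaller than that of $C$. So it suffices to treat a full-dimensional compact convex lattice-free $C\subseteq\R^d$.

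\textbf{Reduction to an ellipsoid.} By John's theorem (see, e.g., \cite{barvinok}) there is an ellipsoid $E$ with center $c$ such that $E\subseteq C\subseteq c+d\,(E-c)$. Write $B:=E-c=\{y:y^\top Ay\le 1\}$ for a positive definite $A$; its polar is $B^\circ=\{u:u^\top A^{-1}u\le 1\}$ and its support function is $h_B(u)=\sqrt{u^\top A^{-1}u}$. From $C\subseteq c+dB$, the (continuous) width of $C$ in a direction $u$ is at most $2d\sqrt{u^\top A^{-1}u}$, so, taking $u^\ast$ to be a shortest nonzero vector of $\Z^d$ in the norm with unit ball $B^\circ$ (such a vector is primitive),
\[
\ell(C,\Z^d,\R^d)\ \le\ 2d\,\rho+1,\qquad \rho:=\min_{u\in\Z^d\setminus\{0\}}\sqrt{u^\top A^{-1}u}.
\]
On the other hand $E\subseteq C$ and $C$ lattice-free force $c+B$ to contain no integer point, i.e. $\mathrm{dist}_B(c,\Z^d)>1$ in the norm $\|\cdot\|_B$ with unit ball $B$; in particular the covering radius $\mu:=\mu(\Z^d,\|\cdot\|_B)=\max_x\min_{z\in\Z^d}\|x-z\|_B$ satisfies $\mu\ge 1$.

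\textbf{Transference.} It remains to bound $\rho$. Here $\mu$ is the covering radius of $\Z^d$ in the ellipsoidal norm $\|\cdot\|_B$ and $\rho$ is the first minimum of $\Z^d$ (which is self-dual) in the polar ellipsoidal norm; after the further linear change of variables turning $\|\cdot\|_B$ into the Euclidean norm — replacing $\Z^d$ by a lattice $L$ and the polar norm by the Euclidean norm on $L^\ast$ — the product $\mu\cdot\rho$ equals $\mu(L)\cdot\lambda_1(L^\ast)$, which a transference theorem of the geometry of numbers bounds by a quantity of order $d^{3/2}$ (the classical elementary estimates; Banaszczyk's theorem even gives $O(d)$). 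Combined with $\mu\ge 1$ this yields $\rho=O(d^{3/2})$, hence $\ell(C,\Z^d,\R^d)\le 2d\rho+1=O(d^{5/2})$; tracking the constants in the John factor $d$ and the transference factor of order $d^{3/2}$ gives the stated bound $d^{5/2}$. Equivalently: if $\ell(C,\Z^d,\R^d)>d^{5/2}$ then $\rho$ is forced to be so large that $\mu<1$, so $c+B\subseteq C$ contains an integer point, a contradiction.

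\textbf{Main obstacle.} The one genuinely substantial input is the transference inequality $\mu(\Z^d,\|\cdot\|_B)\cdot\lambda_1(\Z^d,\|\cdot\|_{B^\circ})=O(d^{3/2})$ — the quantitative assertion that a lattice whose dual has no short vector has small covering radius — which carries essentially all of the dimension dependence beyond the John factor and requires real geometry-of-numbers machinery (Minkowski's first and second theorems, or Banaszczyk's Gaussian-measure transference). John's theorem is also invoked as a black box. Everything else — the coordinate change, the fattening to full dimension, and the dictionary between widths, support functions, covering radius, and successive minima of an ellipsoid — is routine bookkeeping.
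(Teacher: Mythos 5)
The paper does not prove this statement at all: it is quoted as the classical flatness theorem of the geometry of numbers, with a citation to Barvinok (Theorem VII.8.3), so there is no in-paper proof to compare against. Your argument --- normalize to $\Z^d$ by a lattice basis, fatten a lower-dimensional $C$ to a full-dimensional lattice-free body, sandwich it with John's ellipsoid, and bound the lattice width of the ellipsoid by the transference inequality $\mu(L)\cdot\lambda_1(L^\ast)=O(d^{3/2})$ --- is exactly the standard proof found in that cited source, and it is sound; the only loose end is the final constant-chasing (a naive count gives $\ell(C,\Z^d,\R^d)\le 2d\rho+1$, which with the elementary transference bound lands at roughly $d^{5/2}+1$ rather than $d^{5/2}$), a discrepancy that is immaterial for how the paper uses the bound, since only a finite dimension-dependent bound is needed downstream.
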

%
%

We will also need a theorem about the structure of convex sets that contain no lattice points in their interior, originally stated in Lovasz~\cite{Lovasz89}.

\begin{definition} A convex set $S\subseteq \R^n$ is said to be {\em lattice-free} if $\intt(S)\cap \Z^n = \emptyset$. A {\em maximal lattice-free set} is a lattice-free set that is not properly contained in another lattice-free set. \quad $\triangleleft$
\end{definition}

\begin{theorem}[Theorem~1.2 in Basu et. al. \cite{bccz} and also Lovasz~\cite{Lovasz89}]\label{thm:mlfc-structure}
A set $S\subset \R^n$ is a maximal lattice-free convex set  if and only if one of the following holds:
\begin{itemize}
\item[(i)] $S$ is a polyhedron of the form $S= P+L$ where $P$ is a polytope, $L$ is a rational linear space, $\dim(S)=\dim(P)+\dim(L)=n$, $S$ does not contain any integral point in its interior and there is an integral point in the relative interior of each facet of $S$;
\item[(ii)] $S$ is an irrational affine hyperplane of $\R^n$.
\end{itemize}

\end{theorem}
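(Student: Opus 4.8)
The plan is to prove the two directions of the equivalence separately, splitting the ``only if'' direction according to whether $S$ is full-dimensional.

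\emph{The ``if'' direction.} I would show directly that any set of type (i) or (ii) is lattice-free and has no strictly larger lattice-free convex superset. A type-(ii) set has empty interior, so lattice-freeness is immediate; for maximality, given a lattice-free convex $S'\supsetneq H$ and $y\in S'\setminus H$, a short computation shows that $\conv(H\cup\{y\})$ contains the whole open slab strictly between $H=\{x:a^\top x=\gamma\}$ and the parallel hyperplane through $y$, so this slab lies in $\intt(S')$; since $H$ is irrational, $\{a^\top z:z\in\Z^n\}$ is dense in $\R$ and the slab contains a lattice point, a contradiction. For type (i), $S=P+L$ is a full-dimensional polyhedron, lattice-free by hypothesis; if $S'\supsetneq S$ is lattice-free convex and $y\in S'\setminus S$, then $y$ violates some facet inequality $a^\top x\le\beta$, and by hypothesis the corresponding facet $F$ contains a lattice point $z\in\relint(F)$. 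Since only that inequality is tight at $z$, near $z$ the set $S$ agrees with the halfspace $\{a^\top x\le\beta\}$, and every nearby point with $a^\top x>\beta$ is a convex combination of a nearby point of $S$ and $y$; hence a full neighborhood of $z$ lies in $\conv(S\cup\{y\})\subseteq S'$, so $z\in\intt(S')$, a contradiction.

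\emph{The ``only if'' direction.} First I would replace $S$ by $\cl S$: since $\intt(\cl S)=\intt(S)$ for convex sets, $\cl S$ is lattice-free, and maximality forces $S=\cl S$. If $\intt(S)=\emptyset$, then $S$ is contained in, hence (maximality plus lattice-freeness of hyperplanes) equal to, an affine hyperplane $H$; if the normal direction of $H$ were rational one could thicken $H$ into a lattice-free slab between two consecutive lattice hyperplanes, strictly enlarging it, so $H$ must be irrational --- this is case (ii). The main case is $\intt(S)\neq\emptyset$, where the goal is the decomposition in (i). I would establish it in three steps: (a) $S$ is a polyhedron; (b) its recession cone is a rational linear subspace $L$, so that $S=(S\cap L^\perp)+L$ with $S\cap L^\perp$ a polytope; (c) every facet of $S$ contains a lattice point in its relative interior. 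Granting (a), steps (b) and (c) are local maximality arguments: if some facet $F$ had no lattice point in $\relint(F)$, then, using that $\relint(F)$ is compact modulo $\rec(F)$ and disjoint from the closed discrete set $\Z^n$, one could push that facet slightly outward while keeping the set lattice-free, contradicting maximality; and if $\rec(S)$ were not a rational subspace one could analogously translate $S$ along a suitable rational recession-or-near-recession direction, or tilt a facet, to enlarge it without creating interior lattice points.

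The crux --- and the step I expect to be the real obstacle --- is (a): passing from local boundary data to the global finiteness statement of polyhedrality. My plan for (a) is to show $S$ equals the intersection of the supporting halfspaces of $S$ at the lattice points of $\partial S$ (using maximality, via the ``add an outside point, produce an interior lattice point'' mechanism, to rule out points of this intersection outside $S$), and then to bound the number of such halfspaces via the flatness theorem (Theorem~\ref{thm:flatness}): after quotienting out the lineality space so that the relevant base of $S$ is bounded, infinitely many such facets would force a compact lattice-free convex subset of unbounded lattice-width in every lattice direction, contradicting the $d^{5/2}$ bound, and an induction on dimension then yields finitely many facets. With (a)--(c) in place, combining with the ``if'' direction finishes the proof.
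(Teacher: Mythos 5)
First, a point of comparison: the paper does not prove this statement at all --- it is imported as Theorem~1.2 of Basu--Conforti--Cornu\'ejols--Zambelli (and Lov\'asz) and used as a black box, so your attempt has to be measured against the known literature proof, which is a substantial argument in its own right. Your ``if'' direction and the empty-interior half of the ``only if'' direction are correct and essentially the standard arguments (density of $\{a^\top z : z\in\Z^n\}$ for an irrational normal; a lattice point in the relative interior of a violated facet getting absorbed into the interior of any strictly larger convex set), and you are right that full-dimensional polyhedrality is the crux.

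The genuine gap is in your step (a), and it is twofold. First, the mechanism you propose for proving $S=\bigcap\{\text{supporting halfspaces at lattice points of }\partial S\}$ does not close: given $y\notin S$ lying in all those halfspaces, maximality yields a lattice point $z\in\intt\left(\conv(S\cup\{y\})\right)\setminus\intt(S)$, but nothing forces $z\in\partial S$; if $z\notin S$ you obtain no supporting halfspace at a boundary lattice point cutting off $y$, and the contradiction evaporates. The standard repair is structurally different: for \emph{every} $z\in\Z^n\setminus\intt(S)$ choose a halfspace $H_z\supseteq S$ with $z\notin\intt(H_z)$ (weak separation only, with no requirement that $z$ lie on $\partial S$); the set $\bigcap_z H_z$ is convex, lattice-free and contains $S$, so maximality forces equality, and the remaining work is to show finitely many $H_z$ suffice. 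Second, that finiteness step is where the real technical content of the known proofs lives, and your sketch of it is not an argument: the flatness theorem bounds the \emph{minimum} lattice width of a compact lattice-free set, whereas your plan would need ``infinitely many facets force width exceeding $d^{5/2}$ in every lattice direction,'' with no mechanism supplied; moreover you quotient by the lineality space inside step (a) while the rationality of $\rec(S)=\lineal(S)$ --- needed for the projected lattice to be a lattice so that flatness applies in the quotient --- is only addressed later in step (b), and there only by a vague ``translate or tilt'' remark, so the plan as ordered is circular. The known proofs establish the recession/lineality structure and the reduction to the bounded case carefully and then bound the number of facets by a separate argument (e.g.\ a parity argument on facet lattice points); your (b) and the finiteness part of (a) would need to be rebuilt along those lines before this constitutes a proof.
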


The previous result is used to prove the following.

\begin{theorem}\label{thm:flatness-unbdd-poly} Let $A\in \R^{m \times n}$ be a rational matrix. Then for any $b \in \R^m$ such that $P:= \{x \in \R^n : Ax \geq b\}$ satisfies $P \cap \Z^n = \emptyset$, we must have $\ell(P,\Z^n, \R^n) \leq n^{5/2}$. Note that $P$ is not assumed to be bounded.
\end{theorem}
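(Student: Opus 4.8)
The plan is to reduce the unbounded case to the compact flatness theorem (Theorem~\ref{thm:flatness}) with the help of the structure theorem for maximal lattice-free convex sets (Theorem~\ref{thm:mlfc-structure}). If $P=\emptyset$ there is nothing to prove, so assume $P\neq\emptyset$. Since $P\cap\Z^n=\emptyset$, the set $P$ is lattice-free, hence is contained in a maximal lattice-free convex set $Q\supseteq P$ (a standard application of Zorn's lemma; see the works cited for Theorem~\ref{thm:mlfc-structure}). I would now split according to the two alternatives of Theorem~\ref{thm:mlfc-structure}.

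Suppose first that $Q$ is an irrational affine hyperplane; then $\aff(P)\subseteq Q$. Since $A$ is rational, $\aff(P)$ is a translate $x_0+W$ of the rational linear subspace $W=\ker(A_{\mathrm{eq}})$, where $A_{\mathrm{eq}}$ is the submatrix of rows of $A$ holding with equality throughout $P$. Writing $Q=\{x:c^\top x=\gamma\}$, the inclusion $x_0+W\subseteq Q$ forces $c\perp W$, so $W^{\perp}$ is a nonzero rational subspace (it contains $c\neq 0$). Pick a primitive integer vector $d\in W^{\perp}$; then $d^\top x$ is constant on $x_0+W\supseteq P$, so at most one lattice hyperplane $\{x:d^\top x=k\}$ ($k\in\Z$) meets $P$, and therefore $\ell(P,\Z^n,\R^n)\le 1\le n^{5/2}$.

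Otherwise $Q=P_0+L$, with $P_0$ a polytope, $L$ a rational linear subspace, $\dim Q=n$, $\intt(Q)\cap\Z^n=\emptyset$, and a lattice point in the relative interior of each facet of $Q$. If $L=\{0\}$ then $P\subseteq Q=P_0$ is a polytope with $P\cap\Z^n=\emptyset$ (not merely $\intt(P)\cap\Z^n=\emptyset$), so Theorem~\ref{thm:flatness} applied to $P$ gives the claim. If $d:=\dim L\ge 1$, let $\pi:\R^n\to V':=\R^n/L$ be the quotient map. Because $L$ is rational, $\Lambda:=\pi(\Z^n)$ is a full-dimensional lattice in $V'$ (with $\dim V'=n-d$), and $\pi(Q)=\pi(P_0)$ is a polytope that is full-dimensional in $V'$. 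From $Q=Q+L$ one gets $Q=\pi^{-1}(\pi(Q))$ and hence $\intt(Q)=\pi^{-1}(\intt_{V'}(\pi(Q)))$, so $\intt_{V'}(\pi(Q))\cap\Lambda=\emptyset$; moreover a $\Z^n$-hyperplane meets only finitely many of its parallels inside $Q$ only if it contains $L$, and for those the count agrees with the corresponding count in $V'$, whence $\ell(P,\Z^n,\R^n)\le\ell(Q,\Z^n,\R^n)=\ell(\pi(Q),\Lambda,V')$. Now $\pi(Q)$, like $Q$, does carry lattice points on its boundary, so I cannot apply Theorem~\ref{thm:flatness} to $\pi(Q)$ directly; instead, fixing $c_0\in\intt_{V'}(\pi(Q))$ and setting $K_\epsilon:=c_0+(1-\epsilon)(\pi(Q)-c_0)\subseteq\intt_{V'}(\pi(Q))$ for $\epsilon\in(0,1)$, one has $K_\epsilon\cap\Lambda=\emptyset$, so Theorem~\ref{thm:flatness} gives $\ell(K_\epsilon,\Lambda,V')\le(n-d)^{5/2}$. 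Since passing from $\pi(Q)$ to $K_\epsilon$ scales the width of $\pi(Q)$ in each direction by $1-\epsilon$, comparing widths and letting $\epsilon\to 0$ yields $\ell(\pi(Q),\Lambda,V')\le(n-d)^{5/2}+2$, and $(n-d)^{5/2}+2\le n^{5/2}$ for $d\ge 1$ and $n\ge 2$ (the derivative of $t\mapsto t^{5/2}$ exceeds $2$ on $[1,\infty)$), while $n=1$ cannot arise here (it would give $d=1$ and $Q=\R$, contradicting $\intt(Q)\cap\Z=\emptyset$). Chaining the inequalities finishes the proof.

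The step I expect to be the main obstacle is exactly the one flagged above: Theorem~\ref{thm:mlfc-structure} only guarantees that a maximal lattice-free $Q$ (and hence its projection $\pi(Q)$) has no lattice points in the \emph{interior}, while the flatness theorem requires a compact convex set with no lattice points whatsoever; circumventing this by a controlled shrink and then absorbing the additive $O(1)$ into the slack between $(n-\dim L)^{5/2}$ and $n^{5/2}$ is the delicate part. A secondary technical point is that the quotient construction needs $L$ to be a \emph{rational} subspace (so that $\pi(\Z^n)$ is again a lattice) — exactly what case~(i) of Theorem~\ref{thm:mlfc-structure} supplies — which is why the irrational-hyperplane alternative must be handled separately via the rational affine hull of $P$.
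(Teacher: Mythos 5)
Your proposal is correct and follows essentially the same route as the paper: embed $P$ in a maximal lattice-free convex set, use Theorem~\ref{thm:mlfc-structure} to split off the rational linear space $L$, reduce to the compact flatness theorem (Theorem~\ref{thm:flatness}) in the complementary lattice of dimension $n-\dim L$, and dispose of the lower-dimensional/irrational-hyperplane situation via the rational affine hull of $P$ together with a single family of parallel $\Z^n$-hyperplanes. The only substantive difference is at the boundary-lattice-point issue: the paper passes to a container $C+L\supseteq P$ asserted to satisfy $(C+L)\cap\Z^n=\emptyset$ and applies flatness to $C$ directly, whereas you keep the maximal set (whose facets carry lattice points), work in the quotient by $L$, and handle the boundary points by the $(1-\epsilon)$-shrink at the cost of an additive $2$ absorbed by the slack $n^{5/2}-(n-d)^{5/2}>2$ for $d\ge 1$ — a more careful treatment of the step the paper glosses over.
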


\begin{proof} If $P$ is not full-dimensional, then $\aff P$ is given by a system $\{ x : \tilde A x = \tilde b \}$ where the matrix $\tilde A$ is rational, since the matrix $A$ is rational and $\tilde A$ can be taken to be a submatrix of $A$. Now take a $\Z^n$-hyperplane $H$ that contains $\{ x | \tilde A x = 0 \}$. Then $\ell(P, \Z^n, \R^n, H) = 0$ or $1$, depending on whether the translate in which $P$ is contained in a $\Z^n$-hyperplane translate of $H$ or not. This immediately implies that $\ell(P,\Z^n,\R^n)$ is either $0, 1$.

Thus, we focus on the case when $P$ is full-dimensional. By Theorem~\ref{thm:mlfc-structure}, there exists a basis $v^1, \ldots, v^n$ of $\Z^n$, a natural number $k \leq n$, and a polytope $C$ contained in the linear span of $v^1, \ldots, v^k$, such that $P \subseteq C + L$, where $L = \linspan(\{v^{k+1}, \ldots, v^n\})$ and $(C + L) \cap \Z^n = \emptyset$ (the possibility of $k = n$ is allowed, in which case $L = \{0\}$).

Define $V = \linspan(\{v^1, \ldots, v^k\})$ and $\Lambda$ as the lattice formed by the basis $\{v^1, \ldots, v^k\}$. Since $C$ is a compact, convex set in $V$ and $C\cap \Lambda = \emptyset$, by Theorem~\ref{thm:flatness}, we must have that $\ell(C, \Lambda, V) \leq k^{5/2}$. Every $\Lambda$-hyperplane $H \subseteq V$ can be extended to a $\Z^n$-hyperplane $H' = H + L$ in $\R^n$. This shows that $\ell(C + L, \Z^n, \R^n) \leq k^{5/2} \leq n^{5/2}$. Since $P \subseteq C + L$, this gives the desired relation that $\ell(P,\Z^n, \R^n) \leq n^{5/2}$. \end{proof}

\begin{example}\label{ex:infinite-lattice-width}
If $A$ is not rational, the above result is not true. Consider the set
\begin{equation*}
P := \{(x_1, x_2) : x_2 = \sqrt{2}(x_1-1/2)\}
\end{equation*}
Now, $P \cap Z^2 = \emptyset$. Any $\Z^2$-hyperplane containing $(0,0)$ is the span of some integer vector. All such hyperplanes intersect $P$ in exactly one point, since the hyperplane defining $P$ has an irrational slope and so intersects every $\Z^2$-hyperplane in exactly one point. Hence, $\ell(P, \Z^2, \R^2) = \infty$ for all $H \in \mathcal H_{\Z^2}$ and so $\ell(P, \Z^2, \R^2) = \infty$. \quad $\triangleleft$
\end{example}

Theorem~\ref{thm:flatness-unbdd-poly} will help to establish bounds on the Chv\'atal rank of any lattice-free polyhedron with a rational constraint matrix. First we make the following modification of equation (6) on page 341 in Schrijver~\cite{schrijver86}.

\begin{lemma}\label{lem:face-chvatal} Let $A\in \R^{m \times n}$ be a rational matrix. Let $b \in \R^m$ (not necessarily rational) and let $P:= \{x \in \R^n : Ax \geq b\}$. Let $F \subseteq P$ be a face. Then $F^{(t)} = P^{(t)} \cap F$ for any $t \in \N$.
\end{lemma}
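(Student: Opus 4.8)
The plan is to mimic Schrijver's original argument for the rational case (his equation (6) on page 341), but being careful about two potential sources of trouble: the face $F$ could a priori be the empty face, and the right-hand side $b$ is allowed to be nonrational. I would first dispose of the trivial cases: if $F=\emptyset$ there is nothing to prove since $F^{(t)}=\emptyset = P^{(t)}\cap\emptyset$ for all $t$, and if $F=P$ the statement is a tautology. So assume $F$ is a nonempty proper face. By standard polyhedral theory (valid for any real $b$, since faces of $\{x:Ax\ge b\}$ are obtained by turning some of the inequalities into equalities), there is a subset of rows so that $F = \{x : Ax \ge b,\ a^i x = b_i \text{ for } i \in J\}$ for the appropriate index set $J$ determined by $F$.

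The proof is by induction on $t$. The base case $t=0$ is $F = P \cap F$, which is immediate since $F \subseteq P$. For the inductive step, I would assume $F^{(t)} = P^{(t)} \cap F$ and prove $F^{(t+1)} = P^{(t+1)} \cap F$. The inclusion $F^{(t+1)} \subseteq P^{(t+1)} \cap F$ is the easy direction: $F^{(t+1)} = (F^{(t)})' \subseteq (P^{(t)})' = P^{(t+1)}$ because the \ch closure is monotone under inclusion ($F^{(t)} \subseteq P^{(t)}$ by the inductive hypothesis), and $F^{(t+1)} \subseteq F^{(t)} \subseteq F$ since the \ch closure of a set is contained in the set. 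For the reverse inclusion $P^{(t+1)} \cap F \subseteq F^{(t+1)}$: the key point is that $F$, being a face of $P$, is still a face of $P^{(t)}$ — one shows $P^{(t)} \cap F$ equals the face of $P^{(t)}$ obtained by setting the same inequalities $a^i x \ge b_i$, $i \in J$, to equality; this uses that the valid inequalities $a^i x \ge b_i$ remain valid for $P^{(t)}$ and that $F^{(t)} = P^{(t)} \cap F$ is nonempty (or handle the empty subcase separately). Then, since $P^{(t)} \cap F$ is a face of $P^{(t)}$, any half-space $H \supseteq P^{(t)}$ whose rounding $H_I$ we intersect to form $P^{(t+1)}$ restricts to a half-space $H \cap \aff(F)$ containing $P^{(t)}\cap F$ within $\aff F$, and the integer hull computation commutes with intersecting by the face's affine hull. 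Concretely, one argues that for $x \in F$, membership in $P^{(t+1)}$ forces membership in every $H_I$ for $H \supseteq P^{(t)}$, and restricting attention to half-spaces of $\aff F$ that contain $P^{(t)} \cap F$ shows $x \in (P^{(t)} \cap F)' = (F^{(t)})' = F^{(t+1)}$.

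The main obstacle I anticipate is the nonrationality of $b$: Schrijver's development of \ch closures and Theorem~\ref{theorem:schrijver-23.1} in this paper is stated for rational $b$, or at least the cleanest statements assume it, yet here $b$ is explicitly allowed to be nonrational. So I must be sure that the facts I invoke — that a face of $P$ is a face of $P^{(t)}$, that the \ch closure operation interacts well with taking faces, that $(P^{(t)}\cap F)' = (\text{face of }P^{(t+1)})$ — all go through with only $A$ rational. The cleanest route is to reduce to $\aff F$: write $\aff F = \{x : \tilde A x = \tilde b\}$ with $\tilde A$ a rational submatrix of $A$ (the rationality of $\tilde A$ is exactly where we use that $A$ is rational, and this is the same move used in the proof of Theorem~\ref{thm:flatness-unbdd-poly}), change coordinates so that $\aff F$ becomes a coordinate subspace $\R^{n'} \times \{0\}$ via a rational affine bijection, under which $\Z^n \cap \aff F$ becomes a full-rank lattice that is the integer lattice up to a rational change of basis, and observe that \ch closures are equivariant under rational affine bijections of the ambient lattice. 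This reduces the face statement to the full-dimensional situation inside $\aff F$, where $F$ is $P \cap \aff F$, and there the identity $(P \cap \aff F)^{(t)} = P^{(t)} \cap \aff F$ follows from the fact that adding the valid equality constraints $\tilde A x = \tilde b$ to a TDI-izable system keeps things controlled — here one invokes Theorem~\ref{theorem:basu} to get a TDI representation of $P$ depending only on $A$, notes that $\aff F$ is cut out by setting some of the TDI inequalities to equality, and uses Theorem~\ref{theorem:schrijver-23.1} together with the observation that $\lceil\cdot\rceil$ applied to the right-hand side commutes with the face restriction. I would flag that the one genuinely new technical point relative to Schrijver is checking that these face/closure commutations survive nonrational $b$, and that the fix is always to push $b$ through the (rational) linear maps $U$ and $\tilde A$ and only ever round, never requiring $b$ itself to be rational.
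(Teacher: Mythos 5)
Your reduction to a single closure step and the easy inclusion $F^{(t+1)} \subseteq P^{(t+1)} \cap F$ are fine, but the hard inclusion $P^{(t+1)} \cap F \subseteq F^{(t+1)}$ --- which is where all the content of the lemma lies --- is not actually proved. Your first sketch (``the integer hull computation commutes with intersecting by the face's affine hull'', ``restricting attention to half-spaces of $\aff F$ that contain $P^{(t)}\cap F$'') assumes exactly the statement to be shown: a rational half-space valid for the face need not be valid for $P^{(t)}$, so membership in $P^{(t+1)}$ tells you nothing about the rounding of such a half-space. Schrijver closes this gap by a rotation argument over a TDI description: a valid inequality $cx \le \delta$ for $F$ is combined with the equality $\alpha x = \beta$ cutting out the face to produce an inequality valid for all of $P$ whose rounding, restricted to $F$, recovers the rounded original; this needs the TDI machinery (in particular Schrijver's Theorem 22.2, that turning a valid inequality into an equality preserves total dual integrality, which the paper explicitly re-checks for nonrational right-hand sides) and the integrality of $\beta$. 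None of this appears in your outline.

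Your proposed ``cleanest route'' also breaks precisely at the point you flag as the new difficulty. When $b$ is nonrational, $\Z^n \cap \aff F$ need not be a full-rank lattice in $\aff F$ --- it can be empty (take $F = P \cap \{x : \alpha x = \beta\}$ with $\alpha$ integral and $\beta \notin \Q$) --- so there is no rational change of coordinates turning it into the integer lattice of a coordinate subspace, and your reduction to the full-dimensional case cannot start. The paper handles exactly this situation by a case split that is absent from your proposal: after passing to a TDI description of $P$ and writing $F = P\cap\{x:\alpha x = \beta\}$ with $\alpha$ integral, if $\beta \in \Z$ one follows Schrijver's page-340 lemma essentially verbatim, while if $\beta \notin \Z$ both sides of the identity are empty, since $\alpha x \ge \lceil\beta\rceil$ and $\alpha x \le \lfloor\beta\rfloor$ are valid for $F'$, and $\alpha x \ge \lceil\beta\rceil$ is valid for $P'$, forcing $P' \cap F = \emptyset$. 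Without this case (or some equivalent treatment of faces whose affine hull misses the lattice), your argument does not cover the nonrational right-hand sides that are the whole reason this lemma is being re-proved.
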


\begin{proof} The proof follows the proof of (6) in Schrijver ~\cite{schrijver86} on pages 340-341 very closely. As observed in Schrijver~\cite{schrijver86}, it suffices to show that $F' = P' \cap F$.

Without loss of generality, we may assume that the system $Ax \geq b$ is TDI (if not, then throw in valid inequalities to make the description TDI). Let $F = P \cap \{x : \alpha x = \beta\}$ for some integral $\alpha \in \R^n$.
The system $Ax \geq b, \alpha x \geq \beta$ is also TDI, which by Theorem 22.2 in Schrijver~\cite{schrijver86} implies that the system $Ax \leq b, \alpha x = \beta$ is also TDI (one verifies that the proof of Theorem 22.2 does not need rationality for the right hand side).

Now if $\beta$ is an integer, then we proceed as in the proof of the Lemma at the bottom of page 340 in Schrijver~\cite{schrijver86}.

If $\beta$ is not an integer, then $\alpha x \geq \lceil \beta \rceil$ and $\alpha x \leq \lfloor \beta \rfloor$ are both valid for $F'$, showing that $F' = \emptyset$. By the same token, $\alpha x \geq \lceil \beta \rceil$ is valid for $P'$. But then $P' \cap F = \emptyset$ because $\lceil \beta \rceil > \beta$. \end{proof}

We now prove the following modification of Theorem 23.3 from Schrijver~\cite{schrijver86}.

\begin{theorem}\label{thm:schrijver-23.3} For every $n\in \N$, there exists a number $t(n)$ such that for any rational matrix $A\in \R^{m \times n}$ and $b \in \R^m$ (not necessarily rational) such that $P:= \{x \in \R^n : Ax \geq b\}$ satisfies $P \cap \Z^n = \emptyset$, we must have $P^{(t(n))} = \emptyset$.
\end{theorem}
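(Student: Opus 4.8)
The plan is to induct on the dimension $n$, closely following Schrijver's proof of Theorem 23.3 in~\cite{schrijver86}, but tracking where rationality of $b$ is used and replacing those steps with the nonrational-friendly tools developed above, namely Theorem~\ref{thm:flatness-unbdd-poly} (the flatness bound for rational-constraint polyhedra) and Lemma~\ref{lem:face-chvatal} (faces commute with \ch closures, even for nonrational right-hand sides). The base case $n = 0$ (or $n=1$) is immediate: a lattice-free polyhedron in $\R^0$ is empty, so $t(0) = 0$ works, and in $\R^1$ one \ch closure removes the (necessarily bounded, lattice-point-free) set.

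For the inductive step, suppose the result holds for all dimensions less than $n$ with bound $t(n-1)$, and let $P = \{x : Ax \geq b\}$ with $A$ rational, $P \cap \Z^n = \emptyset$. First I would use Theorem~\ref{thm:flatness-unbdd-poly} to obtain $\ell(P, \Z^n, \R^n) \leq n^{5/2}$: there is a $\Z^n$-hyperplane $H$ such that $P$ meets only the $w := \lfloor n^{5/2} \rfloor + 1$ many $\Z^n$-translates $H_0, H_1, \dots, H_{w-1}$ of $H$ (after an affine unimodular change of coordinates we may take these to be $\{x : x_n = j\}$ for $j$ in some interval of $w$ consecutive integers). The key point is that $P$ is sandwiched: $P \subseteq \{x : \lceil \beta_0\rceil \le x_n \le \lfloor \beta_1 \rfloor\}$ where $\beta_0, \beta_1$ are the min and max of $x_n$ over $P$ (these may be nonrational or $\pm\infty$, but Theorem~\ref{thm:flatness-unbdd-poly} guarantees only finitely many integer values of $x_n$ are attained, so after finitely many \ch rounds — using that $x_n \geq \beta_0$ rounds up to $x_n \geq \lceil\beta_0\rceil$, and similarly from above — we may assume $P$ is contained in the slab $0 \le x_n \le w-1$). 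Then for each $j \in \{0, \dots, w-1\}$, the face (in fact the full-dimensional-in-its-hyperplane slice) $F_j := P \cap \{x : x_n = j\}$ sits inside the hyperplane $x_n = j$, which is a copy of $\R^{n-1}$ with lattice $\Z^{n-1}$, and $F_j \cap \Z^n = \emptyset$ since $P$ is lattice-free. The constraint matrix of $F_j$ (eliminating $x_n = j$) is still rational, so by the inductive hypothesis $F_j^{(t(n-1))} = \emptyset$ for each $j$. By Lemma~\ref{lem:face-chvatal}, $P^{(t(n-1))} \cap \{x_n = j\} = F_j^{(t(n-1))} = \emptyset$ for every $j$, hence $P^{(t(n-1))}$ contains no point with integer $x_n$-coordinate; since $P^{(t(n-1))} \subseteq P \subseteq \{0 \le x_n \le w-1\}$ and misses all $w$ integer hyperplanes in that slab, one checks $P^{(t(n-1))} \subseteq \{x : 0 < x_n < w - 1, x_n \notin \Z\}$, which is a union of $w-1$ open slabs each of ``integer width'' zero; a bounded number $g(w)$ of further \ch rounds (each round shrinks each open slab $j < x_n < j+1$ to nothing, or more carefully, peels off the extreme slabs where valid inequalities $x_n \ge \lceil\cdot\rceil$ and $x_n \le \lfloor\cdot\rfloor$ collide) yields the empty set. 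Setting $t(n) := t(n-1) + g(w) + (\text{the finitely many rounds used for the initial sandwiching})$, which depends only on $n$, completes the induction.

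The main obstacle I anticipate is the handling of \emph{nonrational} right-hand sides in exactly the places Schrijver's argument exploits rationality: (a) concluding that only finitely many integer values of $x_n$ are attained on a possibly unbounded $P$ — this is precisely what Theorem~\ref{thm:flatness-unbdd-poly} is designed to supply, so the real work is checking that the reduction to that theorem's hypotheses (rational constraint matrix, lattice-free) is legitimate after unimodular coordinate changes; and (b) ensuring that passing to the slice $F_j = P \cap \{x_n = j\}$ genuinely produces a polyhedron with a \emph{rational} constraint matrix to which induction applies — here $j$ is an integer so substituting $x_n = j$ preserves rationality of $A$ even if $b$ is nonrational, which is the crucial observation. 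A secondary technical point is bounding the number of ``cleanup'' \ch rounds needed to empty out a slab $\{x : \lceil\beta_0\rceil \le x_n \le \lfloor\beta_1\rfloor\}$ with no interior integer hyperplanes by a quantity depending only on $n$ (not on $b$); this follows because after the slice argument $P^{(t(n-1))}$ lies in finitely many open unit-width slabs, and Lemma~\ref{lem:face-chvatal} applied to the faces $x_n = j$ plus the observation that $\emptyset$'s \ch closure is $\emptyset$ forces collapse in a controlled number of steps. I would double-check the exact recursion for $t(n)$ against Schrijver's Theorem 23.3 to make sure no hidden dependence on $b$ sneaks into the iteration count.
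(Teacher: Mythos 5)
Your overall architecture (induction on $n$, flatness via Theorem~\ref{thm:flatness-unbdd-poly}, then using Lemma~\ref{lem:face-chvatal} plus the dimension-$(n-1)$ hypothesis to kill the integer slices, exactly the paper's strategy) is right, but there is a genuine gap at the central step. You apply Lemma~\ref{lem:face-chvatal} to \emph{every} slice $F_j = P \cap \{x : x_n = j\}$ simultaneously, concluding $P^{(t(n-1))} \cap \{x_n = j\} = F_j^{(t(n-1))} = \emptyset$ for all $j$. But Lemma~\ref{lem:face-chvatal} requires $F$ to be a \emph{face} of the polyhedron, and for the interior values of $j$ the slice $P \cap \{x_n = j\}$ is not a face of $P$ (only the extreme hyperplanes can be supporting). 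There is no analogue of the lemma for non-face slices: \ch cuts for $P$ come from rounding inequalities valid for all of $P$, and the lemma's proof (rotation of valid inequalities for the face, via TDI) breaks down when the hyperplane cuts through the relative interior. Consequently your claimed recursion $t(n) = t(n-1) + g(w) + O(1)$ is unsubstantiated.

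The paper repairs exactly this by peeling the $\leq n^{5/2}+1$ lattice hyperplanes \emph{sequentially}: it shows by an inner induction on $k$ that $P^{(k+1+k\,t(n-1))} \subseteq \{x : c^Tx \geq \delta + k\}$. At stage $\bar k$ the hyperplane $\{c^Tx = \delta + \bar k\}$ \emph{is} supporting for the current iterated closure $Q := P^{(\bar k+1+\bar k\,t(n-1))}$ (whose constraint matrix is still rational, since closures only add integral left-hand sides), so $F = Q \cap \{c^Tx = \delta + \bar k\}$ is a face of $Q$, the dimension induction gives $F^{(t(n-1))} = \emptyset$, Lemma~\ref{lem:face-chvatal} legitimately transfers this to $Q^{(t(n-1))}$, and one further closure advances the bound to $\delta + \bar k + 1$. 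This is why the correct bound is the multiplicative recursion $t(n) = n^{5/2} + 2 + (n^{5/2}+1)\,t(n-1)$ rather than your additive one. Your peripheral observations (substituting $x_n = j$ with $j \in \Z$ keeps the constraint matrix rational even when $b$ is nonrational; the initial rounding to an integer-bounded slab costs one closure) are correct and match the paper, but the parallel-slice shortcut must be replaced by the sequential argument.
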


\begin{proof} We closely follow the proof in Schrijver~\cite{schrijver86}. The proof is by induction on $n$. The base case of $n = 1$ is simple with $t(1) = 1$. Define $t(n) := n^{5/2} + 2 + (n^{5/2} + 1)t(n-1)$.

Since $P \cap \Z^n = \emptyset$, $\ell(P,\Z^n,\R^n) \leq n^{5/2}$ by Theorem~\ref{thm:flatness-unbdd-poly}, this means that there is an integral vector $c\in \R^n$ such that
 \begin{equation}\label{eq:width-bound}
 \lfloor \max_{x\in P} c^Tx\rfloor - \lceil\min_{x\in P} c^Tx \rceil \leq n^{5/2}.
 \end{equation}

Let $\delta = \lceil\min_{x\in P} c^Tx \rceil$. We claim that for each $k = 0, \ldots, n^{5/2} + 1$, we must have \begin{equation}\label{eq:slice} P^{(k+1 +k\cdot t(n-1))} \subseteq \{x : c^Tx \geq \delta + k\}.\end{equation}

For $k=0$, this follows from definition of $P'$. Suppose we know \eqref{eq:slice} holds for some $\bar k$; we want to establish it for $\bar k + 1$. So we assume $P^{(\bar k+1 +\bar k\cdot t(n-1))} \subseteq \{x : c^Tx \geq \delta + \bar k\}$. Now, since $P\cap \Z^n = \emptyset$, we also have $P^{(\bar k+1 +\bar k\cdot t(n-1))} \cap \Z^n = \emptyset$. Thus, the face $F = P^{(\bar k+1 +\bar k\cdot t(n-1))} \cap \{x : c^T x = \delta + \bar k\}$ satisfies the induction hypothesis and has dimension strictly less than $n$. By applying the induction hypothesis on $F$, we obtain that $F^{t(n-1)} = \emptyset$. By Lemma~\ref{lem:face-chvatal}, we obtain that $P^{(\bar k+1 +\bar k\cdot t(n-1) + t(n-1))} \cap \{x : c^T x = \delta + \bar k \} = \emptyset$. Thus, applying the \ch closure one more time, we would obtain that $P^{(\bar k+ 1 +\bar k\cdot t(n-1) + t(n-1) + 1)} \subseteq \{x : c^Tx \geq \delta + \bar k + 1) \}$. This confirms~\eqref{eq:slice} for $\bar k + 1$.

Using $k = n^{5/2} + 1$ in~\eqref{eq:slice}, we obtain that $P^{(n^{5/2}+2 +(n^{5/2} + 1)\cdot t(n-1))} \subseteq \{x : c^Tx \geq \delta + n^{5/2} + 1\}$. From~\eqref{eq:width-bound}, we know that $\max_{x\in P} c^Tx < \delta + n^{5/2} + 1$. This shows that $P^{(n^{5/2}+2 +(n^{5/2} + 1)\cdot t(n-1))} \subseteq P \subseteq \{x : c^Tx < \delta + n^{5/2} + 1\}$. This implies that $P^{(n^{5/2}+2 +(n^{5/2} + 1)\cdot t(n-1))} = \emptyset$, as desired.\end{proof}

This allows us to establish the following.

\begin{theorem}[c.f. Theorem~23.4 in Schrijver~\cite{schrijver86}]\label{theorem:schrijver-23.4}  For each rational matrix $A$ there exists a positive integer $t$ such that for every right hand side vector $b$ (not necessarily rational),
\begin{eqnarray}
\{ x \, : \, Ax \ge b \}^{(t)} = \{ x \, : \, Ax \ge b \}_{I}. \label{eq:t-bound}
\end{eqnarray}
\end{theorem}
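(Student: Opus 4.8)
The plan is to reduce this statement about a family of polyhedra $\{x : Ax \ge b\}$ (over all right-hand sides $b$, including nonrational ones) to the two ingredients just developed: the uniform Chvátal-rank bound for lattice-free rational polyhedra (Theorem~\ref{thm:schrijver-23.3}) and the face-slicing lemma (Lemma~\ref{lem:face-chvatal}). The key observation is that the integer hull $P_I$ of $P = \{x : Ax \ge b\}$ is itself a polyhedron (by Meyer's theorem, since $A$ is rational), and its facet-defining inequalities $\alpha x \ge \beta$ have $\alpha$ ranging over a \emph{finite} set of integral vectors that depends only on $A$ (the possible facet normals of $P_I$ are among the normals of $P$ together with finitely many others generated from $A$; this is exactly the content of Schrijver's Theorem~16.1/17.4-type results, and is where rationality of $A$ is used). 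So it suffices to show: for each such candidate facet normal $\alpha$, after finitely many Chvátal closures — a number bounded uniformly in $n$ and independent of $b$ — the closure $P^{(\cdot)}$ satisfies $\alpha x \ge \beta^\ast$ where $\beta^\ast = \min\{\alpha x : x \in P_I\}$.

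First I would reduce to the full-dimensional case exactly as in Theorem~\ref{thm:flatness-unbdd-poly}: if $P$ is not full-dimensional, restrict to $\aff P$, whose defining equations are a rational subsystem of $A$, and induct/argue within that affine subspace. Second, fix a candidate facet inequality $\alpha x \ge \beta^\ast$ of $P_I$ with $\alpha$ integral. The face $F^\ast := P \cap \{x : \alpha x \le \beta^\ast\}$ (or, if this face misses $P$ entirely, there is nothing to do) is a lower-dimensional polyhedron with rational constraint matrix and right-hand side possibly nonrational; moreover $F^\ast \cap \Z^n$ lies in the hyperplane $\{\alpha x = \beta^\ast\}$, hence $\mathrm{relint}(F^\ast) \cap \Z^n = \emptyset$ when $\beta^\ast$ is chosen as the true integer-hull bound. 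The strategy is then to "push" the closure operator through this face: by Theorem~\ref{thm:schrijver-23.3} applied in dimension $< n$, some fixed power $t(n-1)$ of the Chvátal closure of the relevant lower-dimensional slice is empty (or drops to the lattice-point-containing part), and by Lemma~\ref{lem:face-chvatal}, $F^{(s)} = P^{(s)} \cap F$, so emptiness of $F^{(s)}$ forces $P^{(s)}$ to satisfy $\alpha x \ge \beta^\ast$ after one more closure. This is the same slicing induction already carried out inside the proof of Theorem~\ref{thm:schrijver-23.3}, now applied facet-by-facet; iterating over all (finitely many) candidate facets and taking $t$ to be the sum (or maximum times the count) of the individual bounds gives a single $t$, depending only on $A$, with $P^{(t)} \subseteq P_I$. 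The reverse inclusion $P_I \subseteq P^{(t)}$ is immediate since every Chvátal closure contains $P_I$.

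The main obstacle is the first structural point: ensuring that the set of candidate facet normals $\alpha$ for the integer hull $P_I$ can be taken from a \emph{finite list determined by $A$ alone}, uniformly over all $b$ (rational or not). For rational $b$ this is classical (Meyer / Schrijver), but one must check that the argument — which decomposes $P = Q + C$ with $C = \mathrm{rec}(P)$ the recession cone (depending only on $A$) and $Q$ a polytope, and bounds the facets of $P_I$ in terms of $C$ and the "width" data — does not secretly use rationality of $b$. I expect it does not, because the recession cone and the relevant lattice/Hilbert-basis data are functions of $A$ only, and the nonrational right-hand side only translates things; but this is the step requiring the most care, and it is precisely the place where the extension beyond Schrijver's Corollary~23.4 (to nonrational $b$) must be justified. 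Once that finiteness is in hand, the rest is the bookkeeping of accumulating the per-facet closure counts into one $t(n)$-type bound.
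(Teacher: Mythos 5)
Your overall architecture --- a finite list of candidate facet normals for $P_I$ determined by $A$ alone, plus a slicing induction through faces using Lemma~\ref{lem:face-chvatal} and Theorem~\ref{thm:schrijver-23.3} --- is exactly the skeleton of Schrijver's proof of his Theorem~23.4, which is what the paper's proof invokes verbatim (it simply replaces every appeal to Schrijver's Theorem~23.3 by Theorem~\ref{thm:schrijver-23.3} and notes that his Theorems~17.2 and~17.4 do not need rationality of $b$). But your per-facet step has a genuine gap. Killing the lattice-free slice $P^{(s)} \cap \{x : \alpha x = \beta\}$ with $t(n-1)$ closures and then closing once more does \emph{not} force $\alpha x \ge \beta^{\ast}$; it only advances the valid right-hand side from $\beta$ to (at least) $\beta + 1$, exactly as in the proof of Theorem~\ref{thm:schrijver-23.3}. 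Hence the number of rounds needed in direction $\alpha$ is of the order of the gap between $\lceil \min\{\alpha x : x \in P\}\rceil$ and the integer-hull value $\beta^{\ast}$, and for the final $t$ to be independent of $b$ you must bound this gap by a constant depending only on $A$. That is precisely the proximity ingredient Schrijver imports from his Chapter~17 (alongside the finite list of candidate normals), and it is absent from your proposal: without it, your ``sum of the individual bounds'' can grow with $b$. The flatness theorem cannot substitute for it here, because $P$ contains lattice points, so Theorem~\ref{thm:flatness-unbdd-poly} says nothing about the width of $P$ in the direction $\alpha$.

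A secondary slip: $F^{\ast} := P \cap \{x : \alpha x \le \beta^{\ast}\}$ is in general neither a face of $P$ nor lower-dimensional --- it is a full-dimensional slab whose lattice points lie on the hyperplane $\{\alpha x = \beta^{\ast}\}$ --- so Theorem~\ref{thm:schrijver-23.3}, which requires the set to be disjoint from $\Z^n$ (not merely to have lattice-free interior), does not apply to it; you must argue slice by slice as you later indicate, which brings you back to the unbounded-number-of-slices problem above. Your concern about whether the finiteness of candidate facet normals survives nonrational $b$ is legitimate and is indeed one of the two things to verify; the paper's stance is that both of Schrijver's supporting results (17.2 and 17.4) go through unchanged for nonrational right-hand sides, and that the only substantive modification needed is the nonrational version of 23.3, i.e.\ Theorem~\ref{thm:schrijver-23.3}.
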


\begin{proof} The proof proceeds exactly as the proof of Theorem 23.4 in Schrijver~\cite{schrijver86}. The proof in Schrijver~\cite{schrijver86} makes references to Theorems 17.2, 17.4 and 23.3 from Schrijver~\cite{schrijver86}. Every instance of a reference to Theorem 23.3 should be replaced with a reference to Theorem~\ref{thm:schrijver-23.3} above. Theorems 17.2 and 17.4 do not need the rationality of the right hand side.
\end{proof}

We now have all the machinery we need to prove Theorem~\ref{theorem:b-j-mod}.

\begin{proof}[Proof of Theorem~\ref{theorem:b-j-mod}]
Given $A$ we can generate a nonnegative matrix $U$ using Theorem~\ref{theorem:basu} so that $UAz \ge Ub$ is TDI for all $b.$  Then by Theorem~\ref{theorem:schrijver-23.1} we get the \ch closure using the system  $UAz \ge \lceil Ub \rceil.$ Using Theorem~\ref{theorem:schrijver-23.4} we can apply this process $t$ times independent of $b$ and know we end up with $\{ z \, : \, Az \ge b \}_{I}.$  We then apply Fourier-Motzkin elimination to this linear system and the desired $f_i$'s are obtained.
\end{proof}

With Theorem~\ref{theorem:b-j-mod} in hand we can now prove the main theorem of this subsection. This uses the following straightforward lemma that is stated without proof.

\begin{lemma}\label{lem:affine-comp-chvatal}
Let $T:\R^{n_1} \to \R^{n_2}$ be an affine transformation involving rational coefficients, and let $f:\R^{n_2} \to \R$ be an affine \ch function. Then $f\circ T :\R^{n_1} \to \R$ can be expressed as $f\circ T(x) = g(x)$ for some affine \ch function $g : \R^{n_1}\to \R$.
\end{lemma}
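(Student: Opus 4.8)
The plan is to induct on the structure of the binary tree $T_f$ representing the affine \ch function $f$, using the recursive definition of affine \ch functions from Definition~\ref{definition:ch-functions}. Write $T(x) = Ex + e$ where $E \in \Q^{n_2 \times n_1}$ and $e \in \Q^{n_2}$. The base case is when $T_f$ is a single leaf, so $f(y) = c^\top y + d$ is affine linear with rational $c, d$. Then $f \circ T(x) = c^\top(Ex + e) + d = (E^\top c)^\top x + (c^\top e + d)$, which is again an affine linear function with rational coefficients, hence a trivial affine \ch function on $\R^{n_1}$; take $g$ to be this function.

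For the inductive step I would split into the three cases of the tree's root. If the root has a single child with subtree $h$ and edge labeled $\alpha \in \Q_+$, then $f = \alpha h$, and by induction $h \circ T = g_h$ for some affine \ch function $g_h : \R^{n_1} \to \R$; then $f \circ T = \alpha g_h$, which is an affine \ch function (attach a root with edge label $\alpha$ above the tree for $g_h$). If the edge is labeled $\lceil \cdot \rceil$, then $f = \lceil h \rceil$, and $f \circ T(x) = \lceil h(T(x)) \rceil = \lceil g_h(x) \rceil$, again an affine \ch function. If the root has two children with subtrees $h_1, h_2$ and edge labels $a, b \in \Q_+$, then $f = a h_1 + b h_2$; by induction $h_i \circ T = g_{h_i}$ for affine \ch functions $g_{h_i}$, and $f \circ T = a g_{h_1} + b g_{h_2}$ is an affine \ch function. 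In each case the key point is simply that the class of affine \ch functions is closed under the operations (scaling by $\Q_+$, ceiling, $\Q_+$-combinations) out of which it is built, and that composition with $T$ commutes with all of these operations; composition only affects the leaves, where it sends an affine linear function with rational coefficients to another such function because $E$ and $e$ are rational.

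There is essentially no obstacle here — the only thing to be slightly careful about is that the leaf functions after substitution still have \emph{rational} coefficients, which is exactly why rationality of the affine transformation $T$ is part of the hypothesis; without it, substitution could produce irrational leaf coefficients and take us outside the class. Since this is entirely routine, the statement is given without proof in the paper, and the above induction can be omitted or relegated to a remark.
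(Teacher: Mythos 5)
Your induction on the binary tree representation is correct: composition with a rational affine map only alters the leaves, turning rational affine linear functions into rational affine linear functions, and commutes with the scaling, ceiling, and nonnegative-combination operations at internal nodes. The paper states this lemma without proof precisely because this routine argument is the intended one, so your proposal matches the paper's (implicit) approach.
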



\begin{theorem}\label{theorem:milp-is-mic}
Every MILP-R set is an AC set. That is, $(\text{MILP-R}) \subseteq (\text{AC})$.  
\end{theorem}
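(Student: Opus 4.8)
The plan is to recast membership in $S$ as a parametric \emph{pure}-integer feasibility question and then extract the affine \ch inequalities directly from the consistency tester supplied by Theorem~\ref{theorem:b-j-mod}. Concretely, I would start from a defining MILP-representation $S=\proj_x\{(x,y,z)\in\R^n\times\R^p\times\Z^q : Bx+Cy+Dz\ge b\}$ and eliminate the continuous block $y$ by Fourier--Motzkin elimination applied to the linear system $Bx+Cy+Dz\ge b$. Because the constraint matrix is rational, the FM multipliers are nonnegative rationals, so this yields rational matrices $\bar B,\bar D$ and a vector $\bar b\in\R^{\bar m}$ (each coordinate a nonnegative rational combination of coordinates of $b$, hence possibly nonrational when $b$ is) with
\[
S=\proj_x\{(x,z)\in\R^n\times\Z^q : \bar B x+\bar D z\ge \bar b\}.
\]
Equivalently, $x\in S$ holds exactly when the pure-integer system $\bar D z\ge \bar b-\bar B x$ has a solution $z\in\Z^q$.

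Next I would apply Theorem~\ref{theorem:b-j-mod} to the rational matrix $\bar D$, obtaining finitely many \ch functions $g_i:\R^{\bar m}\to\R$, $i\in I$, such that for every right-hand side $w\in\R^{\bar m}$ the system $\bar D z\ge w$ has an integer solution if and only if $g_i(w)\le 0$ for all $i\in I$. Specializing to $w=\bar b-\bar B x$ gives that $x\in S$ if and only if $g_i(\bar b-\bar B x)\le 0$ for all $i\in I$. Since $x\mapsto \bar b-\bar B x$ is an affine map whose linear part $-\bar B$ is rational, composing each $g_i$ with it produces, by Lemma~\ref{lem:affine-comp-chvatal}, an affine \ch function $f_i(x):=g_i(\bar b-\bar B x)$. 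Hence $S=\{x\in\R^n : f_i(x)\le 0,\ i\in I\}$ is an AC set, which is what we want.

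The hard part will be the possibly nonrational translation vector $\bar b$. This is exactly why Theorem~\ref{theorem:b-j-mod} was established in its nonrational-right-hand-side form and why \ch functions here are taken with domain $\R^n$: as $x$ ranges over $\R^n$, the argument $\bar b-\bar B x$ sweeps out points with irrational coordinates, so a rational-data-only consistency tester would not be enough, and the composition in Lemma~\ref{lem:affine-comp-chvatal} must be carried out for an affine substitution whose \emph{translation} is real (only its linear part needs to be rational — pushing the substitution down to the leaves of the binary tree representing $g_i$ keeps all variable-coefficients rational). Everything else is routine bookkeeping: FM elimination preserves rationality of the constraint matrix, and a nonnegative rational combination of \ch functions precomposed with a rational-linear affine map is again an affine \ch function.
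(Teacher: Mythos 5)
Your proposal is correct and follows essentially the same route as the paper's proof: eliminate the continuous block by Fourier--Motzkin, apply Theorem~\ref{theorem:b-j-mod} to the rational coefficient matrix of the integer variables, and compose the resulting \ch functions with the affine map $x \mapsto \bar b - \bar B x$ via Lemma~\ref{lem:affine-comp-chvatal}, with the same observation that nonrational arguments (hence domain $\R^n$) are exactly what makes this legitimate. The only cosmetic difference is that the paper takes the right-hand side $d$ rational so the translation in the affine substitution stays rational, whereas you additionally track a possibly nonrational $\bar b$; your remark about pushing the substitution to the leaves covers this.
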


\begin{proof}
Let $m, n,  p, q \in \N$. Let $A \in \Q^{m \times n}, B \in \Q^{m\times p}, C \in \Q^{m\times q}$ be any rational matrices, and let $d\in \Q^m$. Define $
\mathcal{F} = \{(x,y,z) \in \R^n \times \R^p \times \Z^q \;: \;\;Ax + By + Cz \geq d\}.$
It suffices to show that the projection of $\mathcal{F}$ onto the $x$ space is an AC set.

By applying Fourier-Motzkin elimination on the $y$ variables, we obtain rational matrices $A', C'$ with $m'$ rows for some natural number $m'$, and a vector $d'\in \Q^{m'}$ such that the projection of $\mathcal{F}$ onto the $(x,z)$ space is given by $\overline{\mathcal{F}}:= \{(x,z) \in \R^n \times \Z^q \;: \;\;A'x + C'z \geq d'\}.$

Let $f_i:\R^{m'}\to \R$, $i\in I$ be the set of Chv\'atal functions obtained by applying Theorem~\ref{theorem:b-j-mod} to the matrix $C'$. 
It suffices to show that the projection of $\overline{\mathcal{F}}$ onto the $x$ space is $\hat{\mathcal{F}}:=\{x\in \R^n \;: \; f_i(d' - A'x) \leq 0, \;\; i\in I\}$
since for every $i\in I$, $ f_i(d' - A'x) \leq 0$ can be written as $g_i(x) \leq 0$ for some affine Chv\'atal function $g_i$, by Lemma~\ref{lem:affine-comp-chvatal}.\footnote{This is precisely where we need to allow the arguments of the $f_i$'s to be nonrational because the vector $d' - A'x$ that arise from all possible $x$ is sometimes nonrational.} This follows from the following sequence of equivalences.
\begin{equation*}
\begin{array}{rcl}
x \in \proj_x(\mathcal{F}) & \Leftrightarrow & x \in \proj_x(\overline{\mathcal{F}}) \\
& \Leftrightarrow & \exists z \in \Z^{q} \textrm{ such that } (x,z) \in \overline{\mathcal{F}} \\
& \Leftrightarrow & \exists z \in \Z^{q}  \textrm{ such that } C'z \geq d' -A'x \\
& \Leftrightarrow & f_i(d' -A'x) \leq 0 \textrm{ for all } i\in I \qquad (\textrm{By Theorem~\ref{theorem:b-j-mod}}) \\
& \Leftrightarrow & x \in \hat{\mathcal{F}}. \qquad (\textrm{By definition of } \hat{\mathcal{F}})
\qquad \qquad \qquad \qquad \qquad \qquad \qquad \qquad \qquad \ \ \qedhere
\end{array}
\end{equation*}
\end{proof}

\begin{remark}\label{rem:homogeneous} We note in the proof of Theorem~\ref{theorem:milp-is-mic} that if the right hand side $d$ of the mixed-integer set is $0$, then the affine \ch functions $g_i$ are actually \ch functions. This follows from the fact that the function $g$ in Lemma~\ref{lem:affine-comp-chvatal} is a \ch function if $f$ is a \ch function and $T$ is a linear transformation. \quad $\triangleleft$
\end{remark}

%

\subsection{Proof of main result}\label{ss:all-put-together}

The proof makes reference to the following example of a DMIAC set that is not in (MILP-R).

\begin{example}\label{ex:dmic-is-too-big}
Consider the set $E := \{(\lambda, 2\lambda): \lambda \in \Z_+\} \cup \{(2\lambda, \lambda): \lambda \in \Z_+\}$ as illustrated in Figure~\ref{fig:counter-example}.
\begin{figure}
\begin{center}
\includegraphics[scale=.75]{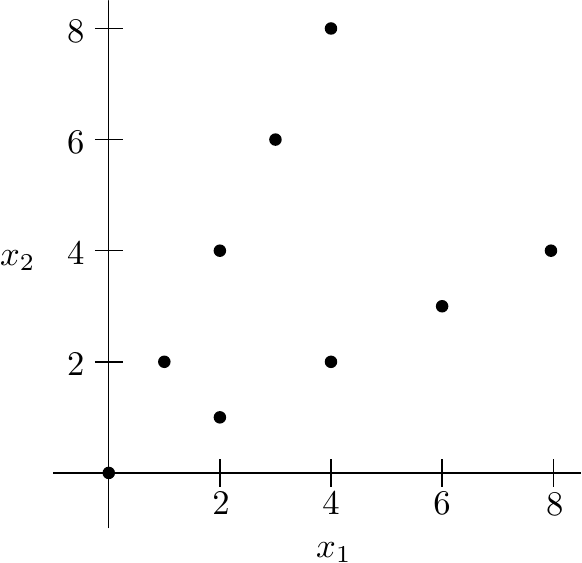}
\end{center}
\caption{The DMIAC set in Example~\ref{ex:dmic-is-too-big} that is not in (MILP-R).}
\label{fig:counter-example}
\end{figure}
This set is a DMIAC set because it can be expressed as $E =\{x \in \Z^2_+ : 2x_1 - x_2 = 0\} \cup \{x \in \Z^2_+ : x_1 - 2x_2 = 0\}.$

We  claim that $E$ is not the projection of any MILP set. Indeed, by Theorem~\ref{theorem:jeroslow-lowe} every MILP-representable set has the form \eqref{eq:jeroslow-lowe-chraracterization}. Suppose $E$ has such a form. Consider the integer points in $E$ of the form $(\lambda, 2 \lambda)$ for $\lambda \in \Z_+$. There are infinitely many such points and so cannot be captured inside of the finitely-many polytopes $P_k$ in \eqref{eq:jeroslow-lowe-chraracterization}. Thus, the ray $\lambda (1, 2)$ for $\lambda \in \Z_+$ must lie inside $\intcone \{r^1, \dots, r^t\}$. Identical reasoning implies the ray $\lambda (2,1)$ for $\lambda \in \Z_+$ must also lie inside $\intcone \{r^1, \dots, r^t\}$. But then, every conic integer combination of these two rays must lie in $E$. Observe that $(3,3) = (2,1) + (1,2)$ is one such integer combination but $(3,3) \notin E$. We  conclude that $E$ cannot be represented in the form \eqref{eq:jeroslow-lowe-chraracterization} and hence $E$ is not MILP-representable. \quad $\triangleleft$
\end{example}

We now prove the main result of the paper.

\begin{proof}[Proof of Theorem~\ref{theorem:main}]
The relationships
\begin{center}
(LP) =   (LP-R)  $\subsetneq$  (MILP)  $\subsetneq$  (MILP-R)
\end{center}
are known but we include the proof for completeness.    By Fourier-Motzkin elimination we know that  projecting variables from a system of linear inequalities gives a new system of linear inequalities so  (LP) =   (LP-R).   There are sets in  (MILP)  that are not  convex while LP sets are convex polyhedra, so (LP) $\subsetneq$ (MILP).  Since (LP) =   (LP-R),   (LP-R) $\subsetneq$ (MILP).  Since a set is always a (trivial) projection of itself, (MILP)  $\subseteq$  (MILP-R).  See Williams~\cite{williams-1} for an example of a set that is in (MILP-R) but not in (MILP).  Therefore (MILP)  $\subsetneq$  (MILP-R).

We now establish the new results

\begin{equation}\label{eq:new-results}
\textrm{(MILP-R)  =  (AC) = (AC-R) = (MIAC) =  (MIAC-R) } \subsetneq \textrm{ (DMIAC)}.
\end{equation}
We first show the equalities. We show in Theorem~\ref{theorem:milp-is-mic} that if a set  $S \in$ (MILP-R), then $S \in$ (AC) so  (MILP-R)  $\subseteq$  (AC).
Since a set is always a (trivial) projection of itself,  (AC) $\subseteq$ (AC-R). Also it trivially follows that (AC-R) $\subseteq$ (MIAC-R) and (AC) $\subseteq$ (MIAC). Finally, since a set is a projection of itself, (MIAC) $\subseteq$ (MIAC-R). Thus, we obtain the two sequences: (MILP-R) $\subseteq$  (AC) $\subseteq$ (AC-R) $\subseteq$ (MIAC-R), and (MILP-R) $\subseteq$  (AC) $\subseteq$ MIAC $\subseteq$ (MIAC-R). To complete the proof of the equalities in~\eqref{eq:new-results}, it suffices to show that (MIAC-R) $\subseteq$ (MILP-R). Consider any $S \in$ (MIAC-R). By definition, there exists a MIAC set $C \subseteq \R^n \times \R^p\times \Z^q$ such that $S = \proj_x(C)$, where we assume $x$ refers to the space in which $S$ lies in, and we let $(y,z) \in \R^p \times \Z^q$ refer to the extra variables used in the description of $C$. In Theorem~\ref{theorem:mic-is-milp}, we show that $C \in$ (MIAC) implies $C \in$ (MILP-R), i.e., there is a MILP set $C'$ in a (possibly) higher dimension such that $C=\proj_{x,y,z}(C')$. Thus, $S = \proj_x(C) = \proj_x(\proj_{x,y,z}(C'))=\proj_x(C')$. So, $S \in$ (MILP-R).
%
%
%
%
%
%

Trivially, (MIAC) is a subset of (DMAIC). From Example~\ref{ex:dmic-is-too-big} we know (MILP-R) $\subset$ (DMIAC).  Since (MILP-R) = (MIAC-R) we now have the complete proof of~\eqref{eq:new-results}.
\end{proof}


\section{Connections to consistency testers}\label{s:consistency-testers}

We now explore the conceptual connection of our main result to an established theory of consistency testers in linear and pure integer systems. Let $A$ be an $m$ by $n$ matrix. We call $V:\R^m \to \R$ an \emph{LP-consistency tester} for $A$ if for any $b\in \R^m$, $V(b) \le 0$ if and only if $\{ x \in \R^n : Ax \ge b \}$ is nonempty. We call $F : \R^m \to \R$ an \emph{IP-consistency tester} for $A$ if for any $b \in \R^m$, $F(b) \le 0$ if and only if $\{ z \in \Z^n : Az \ge b \}$ is nonempty. The following result shows that FM elimination is a source of LP-consistency testers.

\begin{theorem}[Corollary~2.11 in Martin~\cite{martin1999large}]\label{theorem:FM-gives-consistency-tester}
Let $u^1, \dots, u^t$ be the FM multipliers of the matrix $A$ from eliminating all $x$ variables in the system $Ax \ge b$. Then $U(b) = \max_{i = 1, \dots, t} u^i b$ is an LP-consistency tester for $A$.
\end{theorem}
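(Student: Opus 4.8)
The plan is to unwind the definition of the Fourier--Motzkin multipliers and invoke the fact, already used elsewhere in the paper (e.g. in the proof of Theorem~\ref{theorem:milp-is-mic}), that FM elimination computes projections exactly. First I would recall that one step of FM elimination, applied to a system $A'x \ge b'$ in order to eliminate a single variable, produces a new system each of whose inequalities is a \emph{nonnegative} combination of the rows of $A'x \ge b'$, and whose solution set in the remaining variables is exactly the corresponding projection of $\{x : A'x \ge b'\}$. Iterating over all $n$ variables and composing nonnegative combinations of nonnegative combinations --- which are again nonnegative combinations --- I would arrive at nonnegative vectors $u^1,\dots,u^t$ together with a final system in \emph{zero} variables consisting of the inequalities $0 \ge u^j b$ for $j = 1,\dots,t$. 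The structural facts I need are: (a) each $u^j$ is nonnegative; (b) $u^j A = 0$ for all $j$, since every variable coefficient has been driven to zero by the elimination; and (c) by correctness of FM elimination, the solution set of this final zero-variable system equals $\proj_{\emptyset}\{x : Ax \ge b\}$, which is the single point $\R^0$ (``true'') when $\{x : Ax \ge b\} \neq \emptyset$ and is $\emptyset$ (``false'') otherwise.

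Given these facts, both directions of the equivalence fall out immediately. If $\{x : Ax \ge b\} \neq \emptyset$, fix a feasible $\bar x$; then for each $j$, $u^j b \le u^j(A\bar x) = (u^j A)\bar x = 0$, using $u^j \ge 0$, $A\bar x \ge b$, and $u^j A = 0$, so $U(b) = \max_j u^j b \le 0$. Conversely, if $U(b) \le 0$ then $u^j b \le 0$ for every $j$, so the final system $\{0 \ge u^j b : j = 1,\dots,t\}$ is satisfied; hence its solution set is $\R^0$, not $\emptyset$, and by fact (c) this forces $\{x : Ax \ge b\} \neq \emptyset$. Combining the two directions gives $U(b) \le 0$ if and only if $\{x \in \R^n : Ax \ge b\}$ is nonempty, which is precisely the assertion that $U$ is an LP-consistency tester for $A$.

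I expect the only real work to be bookkeeping rather than ideas. The composition of the multipliers should be handled by induction on the number of eliminated variables: eliminating $x_1$ gives rows $u_1^j A x_{-1} \ge u_1^j b$ with $u_1^j \ge 0$; eliminating the next variable forms nonnegative combinations of these rows, which are again nonnegative combinations of the original rows; at termination one obtains the $u^j \ge 0$ with $u^j A = 0$. I would also address the degenerate cases: a variable that does not appear in any inequality simply contributes no step; the elimination may end with $t = 0$, in which case $U(b) = \max \emptyset = -\infty \le 0$ and indeed $Ax \ge b$ is feasible for every $b$; and trivial rows of the form $0 \ge 0$ that can appear mid-elimination contribute $u^j b = 0 \le 0$ and are harmless. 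None of this is deep --- the substantive content is entirely the cited correctness of FM elimination for computing projections.
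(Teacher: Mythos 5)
Your proposal is correct. The paper itself gives no proof of this statement --- it is quoted directly as Corollary~2.11 of Martin~\cite{martin1999large} --- and your argument is exactly the standard one behind that citation: nonnegativity of the composed FM multipliers with $u^jA=0$ gives $U(b)\le 0$ whenever $Ax\ge b$ is feasible, and the exactness of FM elimination as a projection (applied down to the zero-variable system) gives the converse, so nothing is missing.
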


Any LP-consistency tester that arises from applying Fourier-Motzkin to the matrix $A$ is called a \emph{FM-based LP-consistency tester}. For a given matrix $A$ there can be more than one FM-based LP-consistency tester. The FM elimination procedure has two flexibilities that can be adjusted in a given implementation:

\begin{enumerate}[label=(F\arabic*), align=left]
    \item \emph{(Scaling)} Differing nonnegative scalings of the rows of the matrix in the process of eliminating a column. For instance, a common implementation is to first normalize the coefficients in the column to be eliminated to be $\pm 1$. \label{item:flexibility-scaling}
    \item \emph{(Ordering)} Different orders of eliminating columns. For instance, one could eliminate the first column, followed by the second, etc. Alternatively one could eliminate the last column, second-to-last, etc. \label{item:flexiblity-order}
\end{enumerate}

Different choices of scaling and ordering gives rise to different sets of inequalities involving $b$ and hence different consistency testers. However, all FM-based LP-consistency testers share some common properties. We call the cone $C_P = \{ u \in \R^m : uA = 0, u \ge 0 \}$ the \emph{projection cone} of $A$. The FM multipliers have the following relationship with $C_P$.

\begin{theorem}[Proposition~2.3 in Martin~\cite{martin1999large}]
The extreme rays of the projection cone $C_P$ are contained in the set $\left\{u^1, \dots, u^t\right\}$ of FM multipliers of matrix $A$.
\end{theorem}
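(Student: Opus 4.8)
The plan is to first prove the sharper statement that the FM multipliers \emph{generate} the projection cone, i.e.\ $C_P = \cone\{u^1,\dots,u^t\}$, and then extract the claim about extreme rays from the fact that $C_P$ is pointed. One inclusion is immediate: by construction every FM multiplier is an iterated nonnegative combination of rows of $A$, so $u^i \ge 0$; and because $u^1,\dots,u^t$ are the multipliers that remain after \emph{all} $n$ variables have been eliminated, the identity ``$u^iAx \ge u^ib$'' carries no $x$ on its left-hand side, which forces $u^iA = 0$. Hence each $u^i\in C_P$, and since $C_P$ is a convex cone, $\cone\{u^1,\dots,u^t\}\subseteq C_P$.

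For the reverse inclusion $C_P\subseteq\cone\{u^1,\dots,u^t\}$ I would argue by contradiction. Suppose $\bar u\in C_P$ but $\bar u\notin K:=\cone\{u^1,\dots,u^t\}$. As $K$ is finitely generated, hence closed, Farkas' lemma yields a vector $\bar b\in\R^m$ with $u^i\bar b\le 0$ for all $i$ and $\bar u\bar b>0$. By Theorem~\ref{theorem:FM-gives-consistency-tester} (equivalently, because $U(b)=\max_i u^ib$ is an LP-consistency tester for $A$), the condition $u^i\bar b\le 0$ for all $i$ says precisely that $\{x:Ax\ge\bar b\}\neq\emptyset$; pick $x^\ast$ in it. Since $\bar u\in C_P$ we have $\bar u\ge 0$ and $\bar uA=0$, so $0=\bar uAx^\ast\ge\bar u\bar b>0$, a contradiction. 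Therefore $C_P=K=\cone\{u^1,\dots,u^t\}$.

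To finish, I would note that every $u\in C_P$ satisfies $u\ge 0$, so $C_P$ lies in the nonnegative orthant and is in particular a \emph{pointed} polyhedral cone. A pointed polyhedral cone is the conic hull of its finitely many extreme rays, and whenever such a cone is written as $\cone(G)$ for a finite set $G$, each extreme ray is spanned by some element of $G$: an extreme ray is a one-dimensional face, so in any conic representation of a vector on that ray only generators lying on the ray can occur with positive coefficient. Applying this with $G=\{u^1,\dots,u^t\}$ shows that every extreme ray of $C_P$ is generated by one of the FM multipliers, which is the assertion (the rescaling implicit in flexibility~\ref{item:flexibility-scaling} is harmless, as the statement concerns rays).

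The step I expect to carry the weight is the inclusion $C_P\subseteq K$: it is exactly the \emph{completeness} of FM elimination --- the claim that the procedure uncovers \emph{all} the aggregation multipliers witnessing feasibility/infeasibility, not merely a sufficient subfamily --- and the efficient route to it is to feed a Farkas-separating right-hand side back into the consistency-tester characterization of Theorem~\ref{theorem:FM-gives-consistency-tester}. A more hands-on alternative is induction on the number of eliminated variables via the one-step lemma that eliminating a single variable $x_j$ from a system $\tilde Ay\ge\tilde b$ produces multipliers generating exactly $\{u\ge 0:(u\tilde A)_j=0\}$ (whose extreme rays are the $e_i$ with $\tilde a_{ij}=0$ together with the sign-cancelling pairs $(-\tilde a_{i'j})e_i+\tilde a_{ij}e_{i'}$); composing these identities across all eliminations again gives $C_P=\cone\{u^1,\dots,u^t\}$, with the composition bookkeeping being the only real work in that approach.
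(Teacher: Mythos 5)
Your proposal is correct. Note first that the paper itself gives no proof of this statement: it is quoted as Proposition~2.3 of Martin~\cite{martin1999large}, where the standard argument is essentially the ``hands-on'' inductive route you sketch at the end (tracking, step by step, that the multipliers produced by eliminating one variable generate the cone of nonnegative aggregations killing that variable's column, and composing). Your primary route is different and slicker: you prove the stronger identity $C_P=\cone\{u^1,\dots,u^t\}$ by combining the easy inclusion ($u^i\ge 0$ and $u^iA=0$ because all variables have been eliminated) with a Farkas/separation argument that feeds the separating right-hand side $\bar b$ back into the consistency-tester property of Theorem~\ref{theorem:FM-gives-consistency-tester}; the contradiction $0=\bar u A x^\ast\ge\bar u\bar b>0$ is valid, and the final step (in a conic representation of a point on an extreme ray, only generators lying on that ray can carry positive weight) is the correct standard fact, with the rescaling caveat handled as the statement is about rays. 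The one thing worth making explicit is non-circularity: your argument uses Theorem~\ref{theorem:FM-gives-consistency-tester}, so you should note that that result rests only on the correctness of FM elimination as a projection algorithm (each elimination step preserves feasibility) and not on the extreme-ray statement being proved; with that remark in place, your duality proof is self-contained and arguably cleaner than the elimination-bookkeeping proof, at the price of invoking the consistency-tester theorem, whereas the inductive route yields the generation statement directly from the mechanics of the procedure.
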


This connection grounds the following result.

\begin{theorem}\label{theorem:extreme-rays-always-there}
Let $e^1, \dots, e^r$ denote a set of extreme rays of $C_P$ and set $E(b) = \max_{k=1, \dots, r} e^k b$. Then
\begin{enumerate}[label=(\roman*)]
\item $E$ is an LP-consistency tester, and
\item every LP-consistency tester $V$ of the form $V(b) = \max_{i \in I} v^i b$ where $I$ is a finite index set and $v^i \in \R^m$ is such that the set $\left\{v^i : i \in I \right\}$ contains a positive multiple of each extreme ray $e^k$.
\end{enumerate}
\end{theorem}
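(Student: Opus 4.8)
The plan is to derive both parts from Farkas' lemma together with the elementary fact that a pointed, finitely generated cone is minimally generated by its extreme rays. For part (i), recall the Farkas-type feasibility criterion: $\{x \in \R^n : Ax \ge b\} = \emptyset$ if and only if there is $u \ge 0$ with $uA = 0$ and $ub > 0$; equivalently, $\{x : Ax \ge b\}\neq\emptyset$ if and only if $ub \le 0$ for every $u \in C_P$. Since $A$ is rational, $C_P$ is a rational polyhedral cone, and since $u \ge 0$ forces the lineality space of $C_P$ to be $\{0\}$, it is pointed; hence $C_P = \cone\{e^1,\dots,e^r\}$. Consequently ``$ub \le 0$ for all $u \in C_P$'' is equivalent to ``$e^k b \le 0$ for all $k$'', i.e.\ to $E(b) \le 0$. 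Thus $E(b)\le 0 \iff \{x : Ax \ge b\}\neq\emptyset$, which is exactly the statement that $E$ is an LP-consistency tester. (If $C_P = \{0\}$ there are no extreme rays; with the convention that a maximum over an empty index set is $-\infty$ the claim still holds, since then $Ax \ge b$ is feasible for every $b$.)

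For part (ii), let $V(b) = \max_{i\in I} v^i b$ be any LP-consistency tester for $A$. Both $V$ and the function $E$ from part (i) detect feasibility of $Ax \ge b$, so $V(b)\le 0 \iff E(b)\le 0$ for every $b \in \R^m$; passing to sublevel sets, $\{b : v^i b \le 0 \ \forall i \in I\} = \{b : e^k b \le 0 \ \forall k\}$. The left-hand side is the polar cone of $K := \cone\{v^i : i \in I\}$ and the right-hand side is the polar cone of $C_P = \cone\{e^1,\dots,e^r\}$; since both $K$ and $C_P$ are finitely generated, hence closed convex cones, the bipolar theorem gives $K = C_P$. Now invoke the standard fact that every generating set of a pointed cone contains a positive multiple of each of its extreme rays: writing $e^k = \sum_{i\in I}\mu_i v^i$ with $\mu_i \ge 0$ and using that $\R_{\ge 0}\,e^k$ is a one-dimensional face of $K = C_P$, each term $\mu_i v^i$ is a nonnegative multiple of $e^k$; the terms cannot all vanish (else $e^k = 0$), so some $v^i$ is a positive multiple of $e^k$, as claimed.

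I do not anticipate a genuine obstacle. The points needing care are: phrasing Farkas' lemma in the homogeneous ``projection cone'' form and noting that $C_P$ is pointed; justifying the bipolar step via the closedness of finitely generated cones, which is what makes the argument valid even for non-rational $v^i$; and the degenerate case $C_P = \{0\}$, handled by the empty-maximum convention.
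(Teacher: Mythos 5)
Your proof is correct, and while part (i) matches the paper's argument (both directions are a direct application of Farkas' lemma together with the conic decomposition of the projection cone), your part (ii) takes a genuinely different route. The paper fixes an extreme ray $e$, builds an adversarial right-hand side $\bar b$ with $\bar b_j = 1$ on the support of $e$ and $\bar b_j = -M$ off it for arbitrarily large $M$, and then uses two support-comparison claims (stated there without proof) to force some $v^i$ to have exactly the support of $e$ and hence be a positive multiple of it. You instead observe that any two consistency testers have the same sublevel set at $0$, identify that set as the polar of $\cone\{v^i\}$ on one side and of $C_P$ on the other, apply the bipolar theorem (legitimate since finitely generated cones are closed) to conclude $\cone\{v^i\} = C_P$, and finish with the standard fact that a generating set of a pointed cone must hit every extreme ray up to positive scaling, proved via the face property of $\R_{\ge 0}e^k$. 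Your route is cleaner in that it avoids the paper's unproved combinatorial claims and the large-$M$ construction, and it makes explicit where closedness and pointedness of $C_P$ enter; the paper's route is more elementary and exhibits a concrete witness $\bar b$ certifying why each extreme ray is indispensable, which is the intuition the authors then lean on when discussing redundancy in the integer setting. Your handling of the degenerate case $C_P=\{0\}$ is a small additional point of care the paper does not mention.
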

\begin{proof}
(i) To show $E$ is a consistency tester, first suppose $Ax \ge b$ is consistent. Then $e^k A x \ge e^k b$ is also feasible since $e^k \ge 0$ and since $e^kA=0$ this implies $0 \ge e^k b$. Since this is true for all $k$ we have $E(b) \le 0$. Next, suppose $Ax \ge b$ is inconsistent. Then by Farkas Lemma there exists a $u \in C_P$ such that $uA = 0$, $u\ge 0$ and $ub > 0$. Since $u$ is a conic combination of the $e^k$, this means there exists a $k$ such that $e^k b > 0$. Hence, $E(b) > 0$. This implies $E$ is a consistency tester.

(ii) Let $V$ be a consistency tester and let $e$ be an arbitrary extreme ray of $C_P$. Let $J = \left\{j : e_j > 0\right\}$ denote the support of $e$. We make the following two claims, whose proofs are straightforward.

\begin{claim}\label{claim:strict-contain-supports}
The support of any of the $v^i$ cannot be a strict subset of the support of $e$. In particular, if $\{j : v^i_j > 0 \}$ is a subset of $J$ then it must equal $J$.
\end{claim}

\begin{claim}\label{claim:what-happens-when}
If $\{j : v^i_j > 0\} = J$ then $v^i$ is a positive multiple of $e$.
\end{claim}

Now, consider the right-hand side $\bar b$ where $\bar b_j = 1$ for all $j \in J$ and $\bar b_j = -M$ for all $j \notin J$ where $M$ is an arbitrarily large real number.  Since $e\bar b > 0$,  the system $Ax \ge \bar b$ is not feasible.  Then, since $V$ is an LP-consistency tester, there exists an $i$ such that $v^i \bar b > 0$.  But since $M$ is arbitrarily large it must be the case that $v^i_j = 0$ for all $j \notin J$. But this implies that the support of $v^i$ is contained in the support of $J$, and so its support must be exactly $J$ by Claim~\ref{claim:strict-contain-supports}. Then Claim~\ref{claim:what-happens-when} implies $v^i$ is a positive multiple of $e$.

Hence, every extreme ray is a positive multiple of some $v^i$ and so the theorem is proved.
\end{proof}

One interpretation of the previous result is that a set of extreme rays of $C_P$ forms a \emph{minimal} LP-consistency tester for $A$. For this reason, for LP-consistency tester $V(b) = \max_{i \in I} v^i b$ $v^i$ if vector $v^i$ is \emph{not} an extreme ray of $C_P$ we call it  \emph{redundant}. Typically, FM-based LP-consistency involve many redundant vectors (although see Example~\ref{ex:ipco-example-defend-against-flexibilities} below). A key idea of this section is that although these vectors are redundant for an LP-consistency tester, they may not be for an associated IP-consistency testers. Our next task is to make this statement precise.

{\bf One interpretation of Theorem~\ref{theorem:b-j-mod} is that there exists an IP-consistency tester of the form $F(b) = \max_{i \in I} f_i(b)$ where $f_i$ for $i \in I$ is a finite collection of \ch functions.} Our goal is to connect IP-consistency testers of this type to FM-based LP-consistency testers. To do so we need the following definitions and observations.

\begin{definition}\label{definition:carrier}
The \emph{carrier} of a \ch function $f: \R^n \to \R$, denoted $\carr(f)$, is the linear function $g$ that results when all ceiling operators in $f$ are removed. For example, if $f(x_1, x_2) = \lceil \lceil x_1 + x_2 \rceil + 3x_2 \rceil + x_1$ then $\carr(f) = 2x_1 + 4x_2$. \quad $\triangleleft$
\end{definition}

For a more precise definition of carrier see Definition~2.9 in Blair and Jeroslow \cite{blair82}, although this level of formality is not needed for our development. An important fact is that the carrier of the \ch function is unique.

Related to the concept of the carrier is the reverse operation, taking a linear function and turning it into a \ch function through the use of ceiling operations.

\begin{definition}\label{definition:ceilingization}
A \emph{ceilingization} of a linear function $g$, denoted $\ceil g$, is any \ch function $f$ such that $\carr(f) = g$.
\end{definition}

The ceilingization of a linear function need not be unique. Indeed, we have two types of (related) flexibilities.

\begin{enumerate}[label=(F\arabic*), align=left]
\setcounter{enumi}{2}
\item \emph{(Ceiling pattern)} Given a linear function $g$, ceilings can be inserted to include just certain variables, certain terms, or across terms. For instance, $\lceil 2x_1 + 4x_2 \rceil$, $\lceil 2x_1 \rceil + 4x_2$, and $\lceil 2x_1 \rceil + \lceil 4x_2 \rceil$ are all ceilingizations of $g(x_1,x_2) = 2x_1 + 4x_2$.     \label{item:ceiling-pattern}
\item \emph{(Break-ups)} New terms can be created by ``breaking up'' terms and inserting ceilings within the newly created terms. For instance, $\lceil x_1 \rceil + \lceil x_1 \rceil + 4x_2$ and $\lceil \tfrac{1}{2}x_1 \rceil + \lceil \tfrac{1}{2}x_1 \rceil + \lceil x_1 \rceil + 4x_2$ are both ceilingizations of $g(x_1,x_2) = 2x_1 + 4x_2$.        \label{item:breakups}
\end{enumerate}

The following result builds a connection between LP-consistency testers and IP-consistency testers through the lens of carriers.

\begin{theorem}[Theorem~5.20 in Blair and Jeroslow~\cite{blair82}]\label{theorem:ip-gives-rise-to-lp}
If $F(b) = \max_{i \in I} f_i(b)$ is an IP-consistency tester for
$A$ where the $f_i$ are \ch functions and $I$ is finite, then $G(b) = \max_{i \in I} g_i(b)$ is an LP-consistency tester for $A$, where $g_i = \carr(f_i)$ for $i \in I$.
\end{theorem}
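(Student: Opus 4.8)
The plan is to prove the statement by showing that $\{b\in\R^m: G(b)\le 0\}$ coincides with the set $R:=\{b\in\R^m:\{x:Ax\ge b\}\neq\emptyset\}$. By Farkas' lemma $R=\{b: u^k b\le 0\text{ for }k=1,\dots,r\}$, where $u^1,\dots,u^r$ are the (rational) extreme rays of the projection cone $\{u\ge 0: uA=0\}$; in particular $R$ is a closed rational polyhedral cone, so its rational points are dense in it. Before the two containments I would record two elementary facts about a \ch function $f$ and its carrier $g=\carr(f)$, each proved by a routine induction on the binary tree of $f$: (a) $g(b)\le f(b)$ for all $b$, since $\lceil t\rceil\ge t$ and every combination weight is nonnegative; and (b) there is a constant $\kappa_f\ge 0$, depending only on $f$, with $f(b)\le g(b)+\kappa_f$ for all $b$ — each ceiling adds at most $1$, and nonnegative scalings and sums propagate the constant. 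I would also note that $g=\carr(f)$ is a \emph{linear} function $g(b)=c^\top b$, because deleting the ceilings from a tree whose leaves are linear and whose internal operations are nonnegative combinations leaves a linear function.

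The easy containment is $R\subseteq\{G\le 0\}$. First take $b$ rational: if $A\bar x\ge b$ then the LP has a rational solution, which we can write as $\bar x=\bar z/q$ with $\bar z\in\Z^n$ and $q\in\N$, $q\ge 1$; then $A\bar z\ge qb$, so $F(qb)\le 0$ because $F$ is an IP-consistency tester, hence $f_i(qb)\le 0$ for all $i$, and by (a) and linearity $q\,g_i(b)=g_i(qb)\le f_i(qb)\le 0$, giving $G(b)\le 0$. For general $b\in R$ I would approximate $b$ by rational points of the rational polyhedral cone $R$ and pass to the limit, using that $G$ is continuous as a maximum of finitely many linear functions.

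For the reverse containment $\{G\le 0\}\subseteq R$ I would split into the open set $\{G<0\}$ and the set $\{G=0\}$. If $G(b)<0$ then $c_i^\top b<0$ for all $i$, so by (b) $f_i(\lambda b)\le\lambda\,c_i^\top b+\kappa_{f_i}\to-\infty$ as $\lambda\to\infty$; picking $\lambda>0$ large makes $F(\lambda b)<0$, so there is $\bar z\in\Z^n$ with $A\bar z\ge\lambda b$, and then $\bar z/\lambda\in\{x:Ax\ge b\}$, i.e. $b\in R$. The delicate case — which I expect to be the main obstacle — is $b$ with $G(b)=0$ and $b\notin R$. I would rule this out by contradiction: since $R$ is closed and $\{G<0\}\subseteq R$, such a $b$ would have $G\ge 0$ on a neighbourhood of $b$ with $G(b)=0$; because $G=\max_{i\in I}c_i^\top b$ is convex this local minimum is global, so $G\ge 0$ everywhere. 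That forces $0\in\conv\{c_i:i\in I\}$ (otherwise a separating vector $v$ yields $G(v)<0$), which in turn forces $\{b:c_i^\top b\le 0\ \forall i\}=\{G\le 0\}$ to have empty interior. But by the easy containment $R\subseteq\{G\le 0\}$, and $R$ has nonempty interior — it contains a ball around $-\mathbf 1$, since $A\cdot 0=0\ge b$ whenever $b\le 0$ — a contradiction. Hence no such $b$ exists, so $\{G\le 0\}=\{G<0\}\cup\{G=0\}\subseteq R$; combined with the easy containment, $\{b:G(b)\le 0\}=R$, which is exactly the statement that $G$ is an LP-consistency tester for $A$.
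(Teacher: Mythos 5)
Your overall plan is sound, and most of it goes through: facts (a) and (b) about a \ch function versus its carrier are correct, the forward inclusion $R\subseteq\{G\le 0\}$ (rational $b$ via scaling to clear denominators, then density of rational points in the rational polyhedral cone $R$ plus continuity of $G$) is fine, and the case $G(b)<0$ of the reverse inclusion is handled correctly by the scaling argument. (For reference, the paper does not prove this statement at all --- it cites Theorem~5.20 of Blair and Jeroslow --- so your argument is being judged on its own.)

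The gap is in your ``delicate case.'' The step ``$0\in\conv\{c_i\}$ forces $\{b: c_i^\top b\le 0\ \forall i\}$ to have empty interior'' is false when the only convex representations of $0$ use carriers that are themselves the zero vector. Zero carriers can genuinely occur: $\carr(f_i)$ may be identically $0$ (the simplest instance being the identically-zero \ch function, which can be adjoined to any valid IP-consistency tester without destroying the tester property, e.g.\ $F(b)=\max\{\lceil b_1\rceil+b_2,\,0\}$ for $A=(1,-1)^\top$). In that situation $0\in\conv\{c_i\}$ holds trivially, yet $\{G\le 0\}$ can be full-dimensional (in the example it is the half-plane $\{b_1+b_2\le 0\}$), so no contradiction with $R\subseteq\{G\le 0\}$ arises. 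Worse, a zero carrier makes $G\ge 0$ everywhere, so $\{G<0\}=\emptyset$ and the entire reverse inclusion $\{G\le 0\}\subseteq R$ then rests on precisely this unproven boundary case; your hyperplane argument only works if every carrier appearing in a representation of $0$ is nonzero. Nor can you simply discard zero-carrier components ``without loss of generality'': showing that $\max_{i:\,c_i\neq 0} f_i$ is still an IP-consistency tester is essentially equivalent to the boundary statement you are trying to prove, since a priori a zero-carrier component could be the one that certifies infeasibility for some right-hand sides. So the proof is complete only under the implicit extra hypothesis that no $\carr(f_i)$ vanishes identically; handling the general case needs an additional idea (this boundary behavior is the real content of Blair--Jeroslow's argument).
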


In other words, given an IP-consistency tester based on \ch functions, it is a simple matter to produce an LP-consistency tester -- just erase all the ceilings! However, this raises the question of a potential converse.

\begin{question}\label{question:lp-to-ip}
Given an LP-consistency
tester $G(b) = \max_{i \in I} g_i(b)$ where $g_i$ are linear for all $i$ and $I$ is finite,
does there exist a ceilingization $f_i$ of the $g_i$ for all $i$ such that $F(b) = \max_{i \in I} f_i(b)$
is an IP-consistency tester?	
\end{question}

For brevity, we abuse terminology and call $F(b) = \max_{i \in I} f_i(b)$ a \emph{ceilingization} of $G(b) = \max_{i \in I} g_i(b)$ if each $f_i$ is a ceilingization of $g_i$. Then, we can rephrase the question as whether there always exists a ceilingization of an LP-consistency that is an IP-consistency tester.

The answer to this question is ``no'', as illustrated in the following example.

\begin{example}\label{ex:ipco-example}
Consider the linear system
\begin{align}\label{eq:fm-example}
\begin{array}{rrrcrr}
- x_1  &+\frac{1}{2} x_2 &-\frac{1}{10} x_3& \ge & b_1 & \\
x_1 &-\frac{1}{4} x_2&&\ge& b_2& \\
&-x_2&+x_3&\ge& b_3& \\
&&x_3&\ge& b_4& \\
&&-x_3&\ge& b_5& .
\end{array}
\end{align}
We generate an LP-consistency tester $G$ using FM elimination (and Theorem~\ref{theorem:FM-gives-consistency-tester}). FM elimination yields
\begin{align}
      0 &\ge  2b_1 + 2 b_2 + \tfrac{1}{2} b_3 + \tfrac{3}{10} b_5 \label{eq:fm-ray1} \\
      0 & \ge \tfrac{1}{10}b_4 + \tfrac{1}{10}b_5. \label{eq:fm-ray2}
\end{align}
when eliminating the variables in the order $x_1$, $x_2$, then $x_3$. This yields the LP-consistency tester
\begin{equation}\label{eq:example-lp-tester}
G(b_1, b_2,b_3,b_4, b_5) = \max \left\{2b_1 + 2b_2 + \tfrac{1}{2}b_3 + \tfrac{3}{10} b_5, \tfrac{1}{10}b_4 + \tfrac{1}{10}b_5 \right\}
\end{equation}
We now show that there is no possible ceilingization of $G$ that yields an IP consistency tester.
Let $\mathcal{B}$ denote the set of all $b = (b_1, \ldots, b_5) \in \R^5$ such that there exist $x_1, x_2, x_3 \in \Z$ satisfying system \eqref{eq:fm-example}. In particular, $b^1 = (0,0,0,1,-1) \notin \mathcal{B}$ while $b^2 = (-1, 0,0,1,-1) \in \mathcal{B}$. Consider $b^1$. This forces $x_3 = 1$  and the only feasible values for $x_1$ are $ 1/10 \le x_1 \le 4/10.$  Therefore, for this set of $b$ values applying the ceiling operator to some combination of terms in~\eqref{eq:fm-ray1}-~\eqref{eq:fm-ray2}  must result in either~\eqref{eq:fm-ray1} positive or~\eqref{eq:fm-ray2}   positive.  Since $b^1_1 = b^1_2 = b^1_3  = 0$  and $b^1_5 = b^2_5 = -1$ there is no ceiling operator that can be applied to any term in~\eqref{eq:fm-ray1} to make  the right hand side positive.  Hence a ceiling operator must be applied to~\eqref{eq:fm-ray2} in order to make the right hand side positive for $b^1_4 = 1$ and $b^2_5 = -1.$  However, consider $b^2$. For this right-hand-side, $x_{1}  = x_{2} = x_{3} = 1$ is feasible.  Since we still have  $b^2_4 = 1$ and $b^2_5 = -1$, the ceiling operator applied to~\eqref{eq:fm-ray2} will incorrectly conclude that there is no integer solution with right-hand side $b^2$. \quad $\triangleleft$
\end{example}

In this example the LP-consistency tester \eqref{eq:example-lp-tester} is minimal in the sense of Theorem~\ref{theorem:extreme-rays-always-there} -- it has only two linear terms in the LP-consistency tester and both are extreme rays of the projection cone. This suggests that although redundant vectors are not needed for the linear cases, they may be needed in the integer case. Since FM elimination is typically a source of redundant vectors, the next question refines Question~\ref{question:lp-to-ip} in this context.

\begin{question}\label{question:fm-to-ip}
Given a matrix $A$, does there exist an FM-based LP-consistency tester $G$ for $A$ such that there exists a ceilingization $F$ of $G$ that is an IP-consistency tester for $A$.
\end{question}

To our knowledge, this question is open. Indeed, it seems hard to answer because of the inherent flexibilities in deriving FM-based LP-consistency testers and in ceilingizing affine functions -- (F1)--(F4) provide four sources of flexibility that can be exploited in deriving a $G$ and $F$ to answer Question~\ref{question:fm-to-ip} positively. The following examples show the power of this flexibility, but also its limitations.

\begin{example}[Example~\ref{ex:ipco-example}, continued]\label{ex:ipco-example-defend-against-flexibilities}

We already showed in Example~\ref{ex:ipco-example} that there is no ceilingization of LP-consistency tester \eqref{eq:example-lp-tester} that yields an IP consistency tester. We demonstrated this by showing every resulting ceilingization cannot separate $b^1$ and $b^2$, while $b^1$ is not a feasible right-hand side and $b^2$ is. In other words, flexibilities \ref{item:ceiling-pattern} and \ref{item:breakups} are not sufficient, given a particular FM-based LP-consistency tester.

However, \eqref{eq:example-lp-tester} is not the only FM-based LP-consistency tester possible.
%
%
We leverage flexibility (F2) and eliminate the variables in a different order:
eliminate
$x_2$ first, followed by $x_3$ then $x_1$ to yield the following:
\begin{align*}
      0 & \ge  4b_1 + 4b_2 + b_3 + \tfrac{1}{5} b_4 + \tfrac{4}{5} b_5 \\
      0 & \ge  \tfrac{16}{3} b_1 + \tfrac{16}{3}b_2 + 4 b_3 + \tfrac{4}{5} b_5 \\
      0 & \ge  b_4 + b_5.
\end{align*}
Observe that there is a simple ceilingization that can separate $b^1$ and $b^2$. Simply round the top inequality to $4b_1 + 4b_2 + b_3 + \lceil \tfrac{1}{5} b_4 \rceil + \lceil \tfrac{4}{5} b_5 \rceil$. Indeed, evaluated at $b^1 = (0,0,0,1,-1)$ this ceilingized inequality evaluations to $0 \not \ge \lceil \tfrac{1}{5} (1) \rceil + \lceil \tfrac{4}{5} (-1) \rceil = 1$. It is straightforward to see that $b^2$ is still feasible. This overcomes the deficiency discussed in Example~\ref{ex:ipco-example}.  Observe that in the above three inequalities involving the $b_i$, one is redundant, in the sense of not being a conic combination of the other two.

Another way to approach this example is to simply add a redundant inequality $x_1  \ge  b_1 + 2 b_2 + \frac{1}{10} b_4$ to the original system~\eqref{eq:fm-example}. Integrality of $x_1$ implies  $ x_1  \ge  \lceil b_1 + 2 b_2 + \frac{1}{10} b_4 \rceil.$
 Applying Fourier-Motzkin elimination to~\eqref{eq:fm-example} along with  $ x_1  \ge  \lceil b_1 + 2 b_2 + \frac{1}{10} b_4 \rceil$  generates  the additional inequality $0   \ge b_1 + \frac{1}{2} b_3 + \lceil b_1 + 2 b_2 + \frac{1}{10} b_4 \rceil + \frac{4}{10} b_5$, which separates $b^1$ and $b^2$. The idea of adding redundant constraints is central to our method in Section~\ref{s:MILP-as-MIC}. \quad $\triangleleft$
\end{example}

The previous example leaves open the question of whether changing the order of elimination in the FM procedure gives rise to a consistency tester for the corresponding integer program. The next example shows that changing the order may be insufficient given a particular scheme of ceilingization.

\begin{example}\label{ex:amitabh-example} Consider the linear system
\begin{align}\label{eq:fm-example-2}
\begin{array}{rrcrr}
3x_1  &+ 2x_2 & \ge & b_1 &\\
-3x_1 &-2 x_2&\ge& b_2 &\\
3x_1 &-2x_2&\ge& b_3 &\\
-3x_1& + 2x_2&\ge& b_4 &.
\end{array}
\end{align}
We will now apply Fourier-Motzkin on this system with a very simple ceilingization rule: whenever we derive a constraint with integer coefficients on the left hand side, we put a ceiling operator on the right hand side. Fourier-Motzkin will be applied with the canonical scalings (see (F1)) where the variable being eliminated has coefficients $\pm1$. Moreover, we will apply the procedure under all possible variable orderings. Under both orderings, we will see that the \ch inequalities obtained do not give a consistency tester for the integer program. This will show that the flexibility of (F2) alone is not enough.

Under the ordering $x_1, x_2$, the final \ch inequalities obtained under the above scheme are
\begin{equation*}
 0 \geq \frac{\lceil b_1 \rceil + \lceil b_2 \rceil}{3}, \;\;\;\; 0 \geq \frac{\lceil b_3 \rceil + \lceil b_4 \rceil}{3}, \;\;\;\; 0 \geq \bigg\lceil\frac{\lceil b_1 \rceil + \lceil b_4 \rceil}{4}\bigg\rceil + \bigg\lceil\frac{\lceil b_2 \rceil + \lceil b_3 \rceil}{4}\bigg\rceil
\end{equation*}

Under the ordering $x_2, x_1$, the final \ch inequalities obtained under the above scheme are
\begin{equation*}
0 \geq \frac{\lceil b_1 \rceil + \lceil b_2 \rceil}{2}, \;\;\;\; 0 \geq \frac{\lceil b_3 \rceil + \lceil b_4 \rceil}{2}, \;\;\;\; 0 \geq \bigg\lceil\frac{\lceil b_1 \rceil + \lceil b_3 \rceil}{6}\bigg\rceil + \bigg\lceil\frac{\lceil b_2 \rceil + \lceil b_4 \rceil}{6}\bigg\rceil
\end{equation*}
Neither of the above two systems give a consistency tester for the integer feasibility problem for~\eqref{eq:fm-example-2}. This is because setting $b_1 = 1, b_2 = -4, b_3 = -1, b_4 = -2$ satisfies all the \ch inequalities above. However, the polyhedron obtained with these right hand sides in~\eqref{eq:fm-example-2} does not contain any integer points in $\Z^2$. \quad $\triangleleft$
\end{example}

Although an answer to Question~\ref{question:fm-to-ip} as stated seems elusive, the theory discussed above (particularly, the part that builds on the approach of Schrijver~\cite{schrijver86}) provides a positive answer to an adjusted question. The idea is to add redundant constraints to the initial system $Ax \ge b$ in order to generate even more redundant vectors in the resulting FM-based LP-consistency tester. In other words, although the FM procedure does generate redundant vectors from the original system, even this level of redundancy is insufficient to produce an IP-consistency tester through ceilingization. However, our results from Section~\ref{s:MILP-as-MIC} do provide a level of ``redundancy'' that does suffice. This insight is captured in the next result.

Let $v^kAx \ge v^k b$ for $k = 1, \dots, K$ be a collection of redundant inequalities
to the linear system $Ax \ge b$ where the $v^k$ are independent of $b$. Let $A'$ be the
matrix $A$ with appended rows $v^kA$ for $k = 1,\dots, K$. Let $b'(b) = (b, v^k b : k = 1, \dots, K)^\top$, (that is, if we think of $b'$ as a function of $b$, appending values to the bottom of $b$). Let $u^1, \dots, u^t$ denote a set the FM multipliers for the matrix $A'$.

\begin{theorem}\label{theorem:we-can-append-to-get-it}
There exists a choice of row multipliers $v^k$ and FM multipliers $u^i$ (as described above) such that there exists a ceilingization of $G(b) = \max_{t = 1, \dots, t} u^t b'(b)$ that is an IP-consistency tester.
\end{theorem}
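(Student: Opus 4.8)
The plan is to re-run the Schrijver-style construction behind Theorem~\ref{theorem:b-j-mod}, but to organize it so that every ceiling it introduces sits directly on top of a \emph{nonnegative linear aggregation} of the rows of $Ax\ge b$ --- exactly the kind of redundant inequality the statement permits. Recall that construction (taking $A$ integral without loss): Theorem~\ref{theorem:basu} gives a nonnegative rational matrix $U_1$ with $U_1Ax\ge U_1b$ TDI, so by Theorem~\ref{theorem:schrijver-23.1} its \ch closure is $U_1Ax\ge\lceil U_1b\rceil$; applying Theorem~\ref{theorem:basu} to $U_1A$ gives $U_2$ with second \ch closure $U_2U_1Ax\ge\lceil U_2\lceil U_1b\rceil\rceil$; iterating $t(n)$ times (Theorems~\ref{thm:schrijver-23.3},~\ref{theorem:schrijver-23.4}) and writing $\bar N:=U_{t(n)}\cdots U_1$ (nonnegative, rational, depending only on $A$) and $N:=\bar NA$ (integral), one obtains $\{x:Ax\ge b\}_I=\{x:Nx\ge c(b)\}$, where $c(b):=\lceil U_{t(n)}\lceil\cdots\lceil U_1b\rceil\cdots\rceil\rceil$ is a vector each of whose components is a \ch function of $b$ (the $U_\ell$ supply the $\Q_+$-labelled edges, the roundings supply the $\lceil\cdot\rceil$-edges) whose $k$-th component has carrier the linear form $b\mapsto\bar N_{k,\cdot}b$. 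Let $e^1,\dots,e^r$ be the extreme rays of the pointed rational cone $C^N:=\{\nu\ge0:\nu N=0\}$. Applying Theorem~\ref{theorem:extreme-rays-always-there}(i) to $N$ with right-hand side $c(b)$,
\[
\{x\in\Z^n:Ax\ge b\}\neq\emptyset \;\iff\; \{x:Nx\ge c(b)\}\neq\emptyset \;\iff\; \max_{1\le j\le r} e^j c(b)\le 0,
\]
so $\widehat F:=\max_j e^j c(b)$ is an IP-consistency tester for $A$, displayed as a pointwise maximum of \ch functions $f^\star_j(b):=e^j c(b)$ with $\carr(f^\star_j)$ equal to the linear form $b\mapsto e^j\bar Nb$.

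\textbf{Choosing the redundant rows.} Let $K$ be the number of rows of $N$ and set $v^k:=\bar N_{k,\cdot}$ for $k=1,\dots,K$. Each $v^k$ is nonnegative and depends only on $A$, so $v^kAx\ge v^kb$ (that is, $N_{k,\cdot}x\ge\bar N_{k,\cdot}b$) is redundant for $\{x:Ax\ge b\}$. With this choice $A'$ is $A$ with the rows of $N$ appended, $b'(b)=(b,\bar Nb)$, and $\{x:A'x\ge b'(b)\}=\{x:Ax\ge b\}$.

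\textbf{Finding the right FM multipliers and ceilingizing.} The key point is that each $(0;e^j)\in\R^m\times\R^K$ is an \emph{extreme ray} of the projection cone $C^{A'}:=\{(\mu,\nu)\ge0:\mu A+\nu N=0\}$: it lies in $C^{A'}$ since $e^jN=0$, and if $(0;e^j)=(\mu_1,\nu_1)+(\mu_2,\nu_2)$ with both summands in $C^{A'}$, then $\mu_1+\mu_2=0$ forces $\mu_1=\mu_2=0$, so $\nu_1,\nu_2\in C^N$ with $\nu_1+\nu_2=e^j$, and extremality of $e^j$ in $C^N$ makes each $\nu_i$ a nonnegative multiple of $e^j$. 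Hence, by the fact recalled above that the extreme rays of the projection cone appear among the FM multipliers (Proposition~2.3 in Martin~\cite{martin1999large}), every run of Fourier--Motzkin elimination of all $x$-variables from $A'x\ge b'$ produces, among its multipliers $u^1,\dots,u^t$, a positive multiple of $(0;e^j)$ for each $j$. Fix such a run and define a ceilingization $F(b)=\max_i f_i(b)$ of $G(b)=\max_i u^ib'(b)$ by: if $u^i=\alpha(0;e^j)$ with $\alpha>0$, set $f_i:=\alpha\,e^jc(b)$, a \ch function whose carrier is $b\mapsto\alpha e^j\bar Nb=u^ib'(b)$ --- a legitimate ceilingization of the $i$-th term of $G$; for every other $u^i$, set $f_i:=u^ib'(b)$, the trivial ceilingization of a linear function by itself. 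Then $F$ is a ceilingization of $G$ and is an IP-consistency tester for $A$: if no integer point exists then $\max_j e^jc(b)>0$ by the displayed equivalence, so some $f_i(b)>0$; and if an integer point exists then $\{x:A'x\ge b'(b)\}\neq\emptyset$, so $u^ib'(b)\le0$ for every FM multiplier (each $u^i\ge0$ annihilates $A'$), hence all the trivially-ceilingized $f_i$ are $\le0$, while $e^jc(b)\le0$ for every $j$ by the displayed equivalence, hence all the nested $f_i$ are $\le0$; so $F(b)\le0$.

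\textbf{Main obstacle.} The genuine difficulty is the step just carried out: guaranteeing that the very specific multipliers $(0;e^j)$, which carry the essential rounding data $c(b)$, actually emerge from Fourier--Motzkin elimination of the augmented system --- this is precisely what the extremality computation buys --- and, dually, checking that the \emph{surplus} FM multipliers that $A'$ inevitably produces (those mixing original and appended rows) cannot wreck soundness, which works because a linear function is its own carrier and therefore admits the harmless identity ceilingization that stays nonpositive on every right-hand side feasible for the LP relaxation. Everything else is bookkeeping on top of Theorems~\ref{theorem:basu},~\ref{theorem:schrijver-23.1}, and~\ref{theorem:schrijver-23.4}; in particular nothing requires $b$ to be rational, since those results were already extended to the nonrational case.
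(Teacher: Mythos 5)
Your proposal is correct and takes essentially the same route as the paper's proof: you instantiate the redundant rows $v^k$ with the nonnegative aggregation multipliers coming from the Schrijver-style closure procedure behind Theorem~\ref{theorem:b-j-mod}, ceilingize their right-hand sides with the nested roundings $c(b)$, and let Fourier--Motzkin elimination of the augmented system supply the tester. The only difference is that you spell out a step the paper leaves implicit -- namely that the multipliers $(0;e^j)$ supported on the appended rows are extreme rays of the projection cone of $A'$ and therefore must appear among the FM multipliers of any run, while every remaining multiplier can safely be given the identity ceilingization -- which is a tightening of the same argument rather than a different approach.
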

\begin{proof}
This is a consequence of the procedure described in the proof of Theorem~\ref{theorem:b-j-mod} (which follows Theorem~\ref{theorem:schrijver-23.4}). Observe that the system $\left\{x : Ax \ge b\right\}_I$ is a system of the form $A'x \ge b'$ with appropriate rounding of the right-hand sides. Then, FM elimination produces an IP-consistency tester (this is precisely the conclusion of Theorem~\ref{theorem:b-j-mod}).
\end{proof}

This result provides a perspective on our results in the previous section, which builds on the work in Schrijver~\cite{schrijver86}. The theory of \ch closures provides the ``right'' redundant constraints to add to the system, and a method to ceilingize the resulting right-hand sides, to recover an IP-consistency tester.


\section{Connections to variable elimination schemes}\label{s:variable-elimination}

In the last section we saw the power of FM elimination for producing consistency testers for linear and some of its potential limitations for producing consistency testers for integer programs. Extending the FM elimination procedure to handle the elimination of \emph{integer} variables has been a goal of-repr several researchers in past decades. One benefit of this exploration is the possibility of producing IP-consistency testers. Other benefits include solving integer programs and understanding notions of duality for integer systems (for more details see \cite{williams86,williams-hooker,balas2011projecting}). This section explores some implications of our methodology for the topic of elimination of integer variables.

To carefully describe what we mean by a \emph{variable elimination scheme} (VES) we first describe the elimination of a single variable. A VES takes a description of a mixed integer set $S \subseteq \R^n \times \Z^q$ involving affine \ch functions and algorithmically produces a representation of the projection $\proj_{x_{-j},z} S$ for some $j \in \{ 1,2, \dots, n \}$ or $\proj_{x, z_{-k}} S$ for some $k \in \{ 1, 2, \dots, q \}$, again using only affine \ch functions. We are a bit vague when we say a description of a set ``involving affine \ch functions''. We allow this to include both MIAC sets and DMIAC sets, as defined in Section~\ref{s:preliminaries}. We also restrict attention to elimination schemes that ``specialize'' to FM elimination when eliminating a continuous variable $x_j$ that does not appear in any ceiling operations. Indeed, in this case, it is straightforward to see that FM suffices to recover the projection.

Next, we describe how a VES approaches the projection of more than one variable from the set $S$. A VES, like FM elimination, will attack this \emph{sequentially}. For instance, if we want to find a description of the projection $\proj_{x_{-j}, z_{-k}} S$ for some $j \in \{ 1,2, \dots, n \}$ and $k \in \{ 1, 2, \dots, q \}$, a VES will first eliminate $x_{-j}$ (or $z_{-k}$) to produce $\proj_{x_{-j},z} S$ (or $\proj_{x, z_{-k}}$). Then, the next step is to eliminate the remaining variable from the description of the projected set.

The existing literature focuses on variable elimination schemes (VES's) for the special case where the starting set is the set of pure integer points inside a polyhedron and the output is a DMIAC set (after eliminating more than the first variable). The VES of Williams and Hooker \cite{williams-hooker} is described in some detail in Section~\ref{ss:williams-hooker} below.

Our approach (the focus of Section~\ref{ss:lift-and-project}) complements the existing methods along two important directions. First, our method starts with an arbitrary MIAC set, not only mixed integer polyhedral sets. Second, we are guaranteed to output a MIAC set, not just a DMIAC set. Also in Section~\ref{ss:williams-hooker} we show that DMIAC sets are not necessarily MILP-R sets. Hence, maintaining a MIAC description after projection is critical to our characterization result of MILP-R sets as MIAC sets (see Example~\ref{ex:dmic-is-too-big} below).

In a related direction, Ryan shows (see Theorem 1 in~\cite{ryan1991}) that $Y$ is a finitely generated integral monoid if and only if there exist \ch functions $f_{1}, \ldots, f_p$ such that $Y = \{ b  \, : \, f_{i}(b) \leq 0, \, i = 1, \ldots, p\}.$ A finitely generated integral monoid $Y$ is MILP representable since, by definition, there exists a matrix $A$ such that $Y = \{  b \, :  \, b = Ax,  x \in \Z^{n}_{+}\}$.  Thus, an alternate proof of Ryan's characterization follows from Theorems~\ref{theorem:mic-is-milp} and~\ref{theorem:milp-is-mic} and Remark~\ref{rem:homogeneous}.

Ryan~\cite{ryan1991} further states that  ``It is an interesting open problem to find an elimination scheme to construct the \ch constraints for an arbitrary finitely generated integral monoid.'' We interpret this statement as asking for a VES as defined at the outset of this section.  Ryan was aware of the methodology of Williams in \cite{williams-1} and this method fell short of her goal. In Section~\ref{ss:lift-and-project} we provide an approach that positively answers the conjecture of Ryan. In fact, it answers positively the more general question: does there exist a VES that provides a MIAC representation of a MILP-R set (which is guaranteed to exist by Theorem~\ref{theorem:milp-is-mic})?

\subsection{Eliminating a single variable, and the difficulty of eliminating subsequent variables}\label{ss:fmpr}

It turns out that eliminating the first integer variable from a linear system can be easily granted by a simple extension of FM elimination. Consider the following procedure. For simplicity, assume that the first variable to be eliminated is $x_1$. Given a linear system $Ax \ge b$ where $x \in \mathbb R^n$ and $A = (a_{ij})$ and let
\begin{align*}
\mathcal H_+ & := \left\{i : a_{i1} > 0 \right\}  \\
\mathcal H_- & := \left\{i : a_{i1} < 0 \right\}  \\
\mathcal H_0 & := \left\{i : a_{i1} = 0 \right\}
\end{align*}
We will assume that $\mathcal H_+$ and $\mathcal H_-$ are both nonempty and hence $x_1$ can be eliminated. The case where one of $\mathcal H_+$ or $\mathcal H_-$ being empty means the problem is unbounded in $x_1$ and the integer projection in this case is straightforward. The case where both of $\mathcal H_+$ or $\mathcal H_-$ are empty means that $x_1$ does not appear in the system, a case that we ignore.

FM elimination stems from that fact that if $Ax \ge b$ where $x = (x_2, \dots, x_n)$ then
\begin{equation}\label{eq:bounds-on-x1}
\tfrac{b_p}{a_{p1}} - \sum_{j=2}^n \tfrac{a_{pj}}{a_{p1}} x_j \le x_1 \le \tfrac{b_q}{a_{q1}} - \sum_{j=2}^n \tfrac{a_{qj}}{a_{q1}} x_j
\end{equation}
for all $p \in \mathcal H_+$ and $q \in \mathcal H_-$. Conversely, if any choice of variables $x_2, x_3, \dots, x_n$ satisfies
\begin{align}
\sum_{j=2}^n a_{ij}x_j & \ge b_i    && \text{ for } i \in \mathcal H_0 \label{eq:fm-standard1}\\
\sum_{j=2}^n \left(\tfrac{a_{pj}}{a_{p1}} - \tfrac{a_{qj}}{a_{q1}} \right)x_j &\ge \tfrac{b_p}{a_{p1}} -  \tfrac{b_q}{a_{q1}} && \text{ for } p \in \mathcal H_+ \text{ and } q \in \mathcal H_-, \label{eq:fm-standard2}
\end{align}
there exists a choice of $x_1$ that satisfies \eqref{eq:bounds-on-x1}, resulting in $Ax \ge b$ where $x = (x_1, x_2, \dots,x_n)$. In other words, \eqref{eq:fm-standard1}--\eqref{eq:fm-standard2} characterizes $\proj_{x_{-1}} \left\{x \in \R^n : Ax \ge b \right\}$.

However, this does not characterize the projection of the \emph{integer} values of $x_1$. All integer $x_1$ satisfy \eqref{eq:bounds-on-x1} but the converse may not be true. There is a simple fix. Introduce ceilings as follows:
\begin{equation}\label{eq:bounds-on-x1-integer}
\left\lceil \tfrac{b_p}{a_{p1}} - \sum_{j=2}^n \tfrac{a_{pj}}{a_{p1}} x_j \right \rceil \le x_1 \le \tfrac{b_q}{a_{q1}} - \sum_{j=2}^n \tfrac{a_{qj}}{a_{q1}} x_j
\end{equation}
and no additional integer values for $x_1$ can be introduced. This is proven formally in the following result.

\begin{prop}\label{prop:project-out-one-variable}
The set $\proj_{x_{-1}} \left\{x \in \Z^n : Ax \ge b \right\}$ equal all integer vectors $(x_2, \dots, x_n)$ such that
\begin{align}
\sum_{j=2}^n a_{ij}x_j & \ge b_i    && \text{ for } i \in \mathcal H_0 \label{eq:nothing-happens}\\
\tfrac{b_q}{a_{q1}} - \sum_{j=2}^n \tfrac{a_{qj}}{a_{q1}}x_j - \left\lceil \tfrac{b_p}{a_{p1}} - \sum_{j=2}^n \tfrac{a_{pj}}{a_{p1}} x_j \right\rceil &\ge  0 && \text{ for } p \in \mathcal H_+ \text{ and } q \in \mathcal H_- \label{eq:what-to-do-now}
\end{align}
\end{prop}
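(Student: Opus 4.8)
The plan is to prove the two inclusions separately. First I would argue that any $(x_2,\dots,x_n)$ in the left-hand set satisfies \eqref{eq:nothing-happens}--\eqref{eq:what-to-do-now}. Indeed, if $(x_1,x_2,\dots,x_n) \in \Z^n$ satisfies $Ax \ge b$, then the rows indexed by $\mathcal H_0$ give \eqref{eq:nothing-happens} directly, and for each $p \in \mathcal H_+$, $q \in \mathcal H_-$ the standard Fourier--Motzkin manipulation of rows $p$ and $q$ shows that $x_1$ lies between the two bounds in \eqref{eq:bounds-on-x1}. Since $x_1 \in \Z$ and $x_1$ is at least the lower bound, $x_1$ is at least the \emph{ceiling} of the lower bound; combining this with $x_1$ being at most the upper bound yields \eqref{eq:what-to-do-now}. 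This direction is routine.

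For the reverse inclusion, suppose $(x_2,\dots,x_n) \in \Z^{n-1}$ satisfies \eqref{eq:nothing-happens}--\eqref{eq:what-to-do-now}. I need to produce an integer $x_1$ such that $(x_1,x_2,\dots,x_n)$ satisfies all rows of $Ax \ge b$. The natural candidate is
\[
x_1 := \max_{p \in \mathcal H_+} \left\lceil \tfrac{b_p}{a_{p1}} - \sum_{j=2}^n \tfrac{a_{pj}}{a_{p1}} x_j \right\rceil,
\]
which is an integer since it is a maximum of integers (and $\mathcal H_+$ is nonempty by assumption). By construction $x_1$ is at least every lower bound, so rows in $\mathcal H_+$ are satisfied. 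Rows in $\mathcal H_0$ are satisfied because they don't involve $x_1$ and \eqref{eq:nothing-happens} holds. The only thing to check is that $x_1$ is at most each upper bound $\tfrac{b_q}{a_{q1}} - \sum_{j=2}^n \tfrac{a_{qj}}{a_{q1}} x_j$ for $q \in \mathcal H_-$: writing $x_1 = \lceil \cdot \rceil$ for the maximizing index $p^\star$, this is exactly inequality \eqref{eq:what-to-do-now} with the pair $(p^\star, q)$, which we are assuming holds. Hence all rows in $\mathcal H_-$ are satisfied too, and $(x_1,\dots,x_n)$ is an integer point of $\{x : Ax\ge b\}$ projecting onto $(x_2,\dots,x_n)$.

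The only genuinely delicate point — and the one I'd state carefully — is that \eqref{eq:what-to-do-now} ranges over \emph{all} pairs $(p,q)$, so in particular it holds for the pair where $p = p^\star$ is the index achieving the maximum defining $x_1$; this is what licenses the final step. Everything else is the standard Fourier--Motzkin argument with a ceiling inserted on the lower-bound side to account for integrality of $x_1$, together with the observation that an integer lying weakly above a real number $r$ lies weakly above $\lceil r \rceil$. No additional machinery beyond elementary arithmetic of the ceiling function is needed, so once the candidate $x_1$ above is written down the verification is essentially immediate.
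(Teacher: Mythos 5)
Your proof is correct and takes essentially the same route as the paper: the forward direction uses integrality of $x_1$ to round up the Fourier--Motzkin lower bounds, and the reverse direction constructs $x_1$ as the ceiling of a lower bound and verifies the rows in $\mathcal H_+$, $\mathcal H_0$, $\mathcal H_-$ separately. Your explicit choice $x_1=\max_{p\in\mathcal H_+}\left\lceil \tfrac{b_p}{a_{p1}} - \sum_{j=2}^n \tfrac{a_{pj}}{a_{p1}} x_j \right\rceil$ is in fact slightly more careful than the paper's, which writes the ceiling for a single unspecified $p$ and leaves the "max" implicit.
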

\begin{proof}
Let $(\bar x_2, \dots, \bar x_n) \in \proj_{x_{-1}} \left\{x \in \Z^n : Ax \ge b \right\}$. There exists exists an integer $\bar x_1$ such that $A\bar x \ge b$ where $\bar x = (x_1, x_2, \dots, x_n)$. Hence, it must be that $(\bar x_2, \dots, \bar x_n) \in \proj_{x_{-1}} \left\{x \in \R^n : Ax \ge b \right\}$ and so \eqref{eq:bounds-on-x1} must be satisfied by $x_1$. Since $x_1$ is integer, it must also satisfy \eqref{eq:bounds-on-x1-integer} when we round up the right-hand side of \eqref{eq:bounds-on-x1}. Hence, $\bar x$ satisfies the system of equations \eqref{eq:nothing-happens}--\eqref{eq:what-to-do-now}.

Conversely, suppose $(\bar x_2, \dots, \bar x_n)$ are integers that satisfy \eqref{eq:nothing-happens}--\eqref{eq:what-to-do-now}. Set $\bar x_1 = \left\lceil \tfrac{b_p}{a_{p1}} - \sum_{j=2}^n \tfrac{a_{pj}}{a_{p1}} \bar x_j \right \rceil$. Clearly, this choice of $\bar x_1$ satisfies \eqref{eq:bounds-on-x1} and is integer. Hence, $\bar x = (\bar x_1, \bar x_2, \dots, \bar x_n)$ is integer and satisfies $A\bar x \ge b$ and thus
\begin{equation*}
(\bar x_2, \dots, \bar x_n) \in \proj_{x_{-1}} \left\{x \in \Z^n : Ax \ge b \right\},
\end{equation*}
as required.
\end{proof}

One would like to continue in this way to build a VES that sequentially eliminate variables. But there is one key challenge: FM elimination works only on linear systems but \eqref{eq:nothing-happens}--\eqref{eq:what-to-do-now} have ceilings!

This ``ceiling quagmire'' calls for new ideas. One approach is to introduce disjunctions. This technique is described at a high level in the next section. However, as we will see there, introducing disjunctions moves us outside the class of MIAC sets that characterize MILP representability. Our approach is to ``lift'' the formulation by introducing new integer variables as in Theorem~\ref{theorem:mic-is-milp} and then ``project'' using the technique described in Theorem~\ref{theorem:milp-is-mic}. This ``lift-and-project'' method is described in detail in Section~\ref{ss:lift-and-project}.

\subsection{The Williams-Hooker elimination scheme}\label{ss:williams-hooker}

In this section we briefly describe the main idea and some the implications of the elimination scheme of Williams and Hooker \cite{williams-hooker}.  For short (and in parallel to FM elimination), we call their procedure WH elimination.  WH elimination builds on the previous work of Williams in \cite{williams-2,williams86,williams-1}.

WH elimination is a VES that takes as input a polyhedral description of a set of mixed integer vectors in the form of linear inequalities. Variables are eliminated in a similar manner as FM elimination with an additional step of accounting for integrality. This accounting introduces two additional mathematical features not present in FM elimination: congruence relations and disjunctions. The congruence relation relates to the coefficients on the variables that are eliminated and the disjunctions correspond to an exhaustive enumeration of congruence classes, e.g. $0 \bmod 3$, $1 \bmod 3$, and $2 \bmod 3$. These new mathematical features get around the ``quagmire'' described at the end of Section~\ref{ss:fmpr}.

WH elimination is a powerful technique that can be used to analyze a variety of integer programming-related questions. For our specific context, it can be used to establish the following result.

\begin{theorem}\label{theorem:w-h-gives-projection}
Every MILP-R set is a DMIAC set. That is, $(\text{MILP-R}) \subseteq (\text{DMIAC})$.
\end{theorem}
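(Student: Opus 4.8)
The plan is to run the Williams--Hooker elimination scheme on a mixed-integer linear representation of $S$ and then translate its output into the language of affine \ch inequalities. Write $S = \proj_x\{(x,y,z) \in \R^n\times\R^p\times\Z^q : Bx+Cy+Dz \ge b\}$. First I would clear the continuous auxiliary variables $y$ by ordinary Fourier--Motzkin elimination (these aggregations stay linear, so no ceilings appear), obtaining $S = \proj_x\{(x,z) \in \R^n \times \Z^q : B'x + D'z \ge b'\}$ for rational $B', D'$ and some $b'$. Then apply WH elimination to project out the integer variables $z_1,\dots,z_q$ one at a time; equivalently, one can run WH directly on the mixed system, so this preliminary pass is a convenience, not a necessity.

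The structural input I would invoke is that WH elimination, applied so as to eliminate integer variables from a system of linear inequalities, returns a \emph{finite disjunction} of systems, each consisting of finitely many affine \ch inequalities in the remaining variables $x$ together with finitely many congruence relations $\alpha^\top x \equiv \rho \pmod{\mu}$ on affine forms in $x$ (this is the content of \cite{williams-hooker,williams-1,williams-2}; the congruences and the disjunctions over residue classes are exactly the ``extra features'' noted in Section~\ref{ss:williams-hooker}). It then remains only to absorb the congruences into affine \ch inequalities. For an affine linear $h:\R^n\to\R$ with rational coefficients one has $h(x)\in\Z$ if and only if $\lceil h(x)\rceil - h(x) \le 0$, and $\lceil h(x)\rceil - h(x) = \lceil h(x)\rceil + (-h(x))$ is a sum of two affine \ch functions each with coefficient $1$, hence is itself an affine \ch function. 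Applying this with $h(x) = (\alpha^\top x - \rho)/\mu$ rewrites $\alpha^\top x \equiv \rho \pmod{\mu}$ as a single affine \ch inequality. Carrying out this replacement in every disjunct turns the WH output into a finite union of AC sets, which is a DMIAC set (take every $I_k = \emptyset$ in the definition of a DMIAC set), proving $S \in (\text{DMIAC})$.

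The main obstacle is that this argument leans on the precise form of the WH output, while WH itself is only sketched in Section~\ref{ss:williams-hooker}; a fully self-contained proof would have to reproduce (or carefully cite) the inductive description of WH elimination, in particular the bookkeeping of which moduli $\mu$ and which rounded inequalities arise when an integer variable with a given column of coefficients is eliminated. A secondary, routine point is verifying that the rounding operations WH introduces in the inequalities (not just the congruences) also stay within the affine \ch language, which amounts to the kind of sign-careful manipulations used just above. Finally, it is worth remarking that the statement is also an immediate corollary of results already in hand: Theorem~\ref{theorem:milp-is-mic} gives $(\text{MILP-R}) \subseteq (\text{AC})$, and every AC set is trivially a DMIAC set. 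The point of the WH-based proof is to place the result inside the elimination-theoretic perspective of this section and to make transparent that DMIAC is the best WH can deliver --- the disjunctions it introduces genuinely take one outside $(\text{MILP-R})$, as witnessed by Example~\ref{ex:dmic-is-too-big}.
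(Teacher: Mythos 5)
Your proposal is correct and matches the paper's treatment: the paper likewise presents this theorem as a direct consequence of the Williams--Hooker elimination method (without reproducing its details) and observes that the containment also follows immediately from the already-established chain $(\text{MILP-R}) = (\text{AC}) = (\text{MIAC}) \subseteq (\text{DMIAC})$ of Theorem~\ref{theorem:main}, which is exactly your closing remark. Your extra device for absorbing congruences $\alpha^\top x \equiv \rho \pmod{\mu}$ into affine \ch inequalities via $\lceil h(x)\rceil - h(x) \le 0$ is a valid and welcome concretization, but it does not change the underlying approach.
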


Theorem~\ref{theorem:w-h-gives-projection} is not explicitly stated in \cite{williams-2,williams-1,williams-hooker} but it is a direct consequence of their method.
We established this containment already in Theorem~\ref{theorem:main} using a different methodology, first of all showing the equivalence between (MILP-R) and (MIAC). 
Example~\ref{ex:dmic-is-too-big} shows that the converse is not true. 

\subsection{A lift-and-project variable elimination scheme}\label{ss:lift-and-project}

We now describe a VES that takes as input an arbitrary MIAC set. We describe only a single variable elimination step. Since it takes as input an arbitrary MIAC set and produces as output its projection described as a MIAC set, it can be used iteratively to sequentially elimination all variables.

\noindent \hrulefill
\begin{center}
\textsc{Lift-and-project method for eliminating a single variable}
\end{center}

\noindent \textsc{Input}:  Mixed integer set $S$ described by a system of affine \ch inequalities $\{ (x,z) \in \R^n \times \Z^q : f_i(x,z) \le b_i \text{ for } i = 1, \dots, m\}$ and a variable to project, either $x_j$ or $z_j$.
\vskip 5pt
\noindent \textsc{Output}: A system of affine \ch inequalities describing the projection of $S$ onto all but one of its variables; that is, $\proj_{x_{-j},z} S$ or $\proj_{x,z_{-j}} S$.
\vskip 5pt
\noindent \textsc{Procedure}:
\vskip 5pt
\begin{enumerate}
\item If the variable to project is $x_j$ AND $x_j$ does not appear in any of the ceiling operators of any of the $f_i$ then use FM elimination to eliminate $x_j$ and return the resulting system. Else, go to 2.
\item \emph{Lift step.} Introduce integer variables $w_k$ for each ceiling function that involves $x_j$ (alternatively $z_j$) following the procedure described in Lemma~\ref{lem:reduce-ceil-count}. Suppose $K$ integer variables are introduced and set $T$ (with total ceiling count 0) denotes the resulting MILP set in $\R^n \times \Z^q \times \Z^K$. 
\item \emph{Project step.} Use the procedure described in Theorem~\ref{theorem:milp-is-mic} to find eliminate variables $w_1, \dots, w_K$ and $x_j$ (alternatively $z_j$) to return the resulting characterization of $\proj_{x_{-j},z} S$ (alternatively $\proj_{x, z_{-j}} S$).
\end{enumerate}
\noindent \hrulefill
\vskip 5pt
Clearly, the resulting algorithm of sequentially applying the above procedure produces a variable elimination scheme. The lifting into higher dimensions overcomes the ``quagmire'' discussed at the end of Section~\ref{ss:fmpr}. Moreover, eliminating the lifted variables $w_1, \dots, w_K$ in the projection step of the procedure produces (potentially many) redundant inequalities to the description of $S$ in its original variable space. As discussed in Section~\ref{s:consistency-testers}, these additional redundant constraints are useful in describing the integer projection. This procedure positively answers the question of Ryan~\cite{ryan1991} and provides a projection algorithm in a similar vein to Williams \cite{williams-1,williams-2,williams-hooker}  and Balas \cite{balas2011projecting} but {\it without} the use of disjunctions.

\section{Conclusion}\label{s:conclusion}

This paper describes a novel hierarchy  of linear representable sets, mixed-integer linear representable sets  and sets represented  by affine \ch functions.   This hierarchy is summarized in our main result (Theorem~\ref{theorem:main}). Our results show that affine \ch functions  are a unifying tool for mixed-integer linear optimization, incorporating both integrality and the notion of projection. We then explore a variety of implications of this hierarchy. For instance, we   extend and contextualize the theory of consistency testers for integer programs, which has traditionally used the tool of \ch functions, to the more general setting of MILP-R sets. Moreover, we provide a new variable elimination scheme for studying MILP-representable systems that builds on the existing literature, which was based on a combination of disjunctions and ceiling operations for pure polyhedral integer systems.

We are intrigued by the possibility that our results could be used in applications.  The use of  AC sets could provide an opportunity for modeling, as the operation of rounding affine inequalities has an inherent logic that may be understandable for particular applications. If a problem can be modeled using AC constraints, then we know it has an mixed-integer linear representation in some higher dimension.  We leave the full exploration of this issue as an area for future research. Here, we provide an illustrative example to underscore this point. 

\begin{example}
Consider a production batch-size problem. A product can be either not be produced, or if we produce a {\it positive} quantity we must produce between 50 and 200 units. In other words

$$
x = 0  \qquad {\rm OR}  \qquad  50  \le   x \le 200.
$$

Based on the results in this paper there are three {\it equivalent representations.}

\noindent {\bf Representation 1: Disjunctive representation}  $P_1 \cup P_2$  where $P_1 = \{  x |   x = 0 \}$  and $P_2 = \{  x |    50  \le   x \le 20 \}.$

\noindent {\bf Representation 2:  MILP-R set}

The following standard formulation introduces the auxiliary binary variable $y$:
\begin{eqnarray*}
x &\ge&  50 y \\
x &\le&  200 y \\
y &\in&  \{ 0, 1 \}.
\end{eqnarray*}

\noindent {\bf Representation 3:  AC  set}

The AC constraint
\begin{equation}\label{eq:this-one}
x/200  + \lceil -1/50 x \rceil \le 0 
\end{equation}
admits the zero solution and all solutions in the closed interval $[50, 200]$, that is all solutions in $P_1 \cup P_2.$  Also, strictly negative values of $x$ and values of $x$ in the open interval $(0, 50)$ are not feasible to \eqref{eq:this-one}.   However, \eqref{eq:this-one} does admit values of $x$ greater than $200$ such as $201$.  Hence we add
\begin{equation}\label{eq:that-one}
x \le 200
\end{equation}
in order to obtain the exact modeling of $P_1 \cup P_2$. \quad $\triangleleft$
\end{example}

We also see an analogy between the relationship between our work and that of Williams, Hooker,  and Balas and the two main approaches to algorithmically solving integer programs -- branching and cutting planes. Disjunction is the organizing concept of branch-and-bound methods in integer programming, which is also at the core of the work of Williams, Hooker and Balas. By contrast, cutting planes in integer programming often result from ``rounding'', which introduces ceiling (or floor) operations. This is in concert with our approach to describing mixed-integer sets with \ch functions. Indeed, our main result relies on results that also serve as a foundation for the theory of cutting planes. Our requirements, however, are more demanding than standard integer programming since we solve parametrically in the right-hand side. Hence, we add all possible cutting planes of interest for any right-hand side (see Theorem~\ref{theorem:b-j-mod} and cf. Theorem~23.4 of \cite{schrijver86}). This full complement of ``redundant constraints'' are needed for the projection to work, as discussed in Section~\ref{s:consistency-testers}.


\section*{Acknowledgments}

We thank the anonymous reviewers who commented on a preliminary version of this manuscript that appears in the 2017 proceedings of the IPCO conference (referenced here as \cite{basu2017mixed}). The first author is supported by NSF grant CMMI1452820. The third author thanks the University of Chicago for its generous research support.

\bibliographystyle{plain}
\bibliography{../../../../references/references}

\end{document}